\documentclass[11pt]{amsart}
\usepackage[a4paper,margin=29mm]{geometry}
\usepackage[T1]{fontenc}
\usepackage{lmodern}
\usepackage{microtype}
\usepackage{indentfirst}
\usepackage{graphicx}
\usepackage{amsmath,amssymb,amsthm,mathtools}
\usepackage{xcolor}
\usepackage[
  colorlinks=false,
  pdfborder={0 0 1},
  linkbordercolor=red,
  citebordercolor=green,
  urlbordercolor=blue
]{hyperref}

\usepackage{etoolbox}

\makeatletter

\patchcmd{\thebibliography}
  {\usecounter{enumiv}}
  {\usecounter{enumiv}%
   \setlength{\itemsep}{4pt}%
   \setlength{\parsep}{0pt}}
  {}{}

\makeatother

\allowdisplaybreaks
\newtheorem{theorem}{Theorem}[section]
\newtheorem{lemma}[theorem]{Lemma}
\newtheorem{proposition}[theorem]{Proposition}
\newtheorem{definition}[theorem]{Definition}

\newtheorem{corollary}[theorem]{Corollary}
\theoremstyle{remark}
\newtheorem{remark}[theorem]{Remark}
\newcommand{\cM}{\mathcal M}
\newcommand{\cN}{\mathcal N}
\newcommand{\cF}{\mathcal F}
\newcommand{\Gm}{G_{\mathbf m}}

\newcommand{\N}{\mathbb N}

\newcommand{\1}{\mathbf 1}
\newcommand{\norm}[1]{\left\lVert #1\right\rVert}
\newcommand{\dunion}{\mathbin{\dot\cup}}
\newcommand{\bigdunion}{\mathop{\dot\bigcup}}

\begin{document}

 \baselineskip 16.6pt
\hfuzz=6pt

\widowpenalty=10000

\renewcommand{\theequation}
{\thesection.\arabic{equation}}

\def\SL{\sqrt H}

\newcommand{\mar}[1]{{\marginpar{\sffamily{\scriptsize
        #1}}}}

\newcommand{\as}[1]{{\mar{AS:#1}}}

\numberwithin{equation}{section}

\title[Vector-valued Vilenkin LPR Inequalities]
{Vector-valued Littlewood--Paley--Rubio de Francia Inequalities on Vilenkin Systems}

\author{Deyu Chen}
\address{Deyu Chen, Institute for Advanced Study in Mathematics,
Harbin Institute of Technology, Harbin 150001, China}
\email{1201200317@stu.hit.edu.cn}

\author{Guixiang Hong}
\address{Guixiang Hong, Institute for Advanced Study in Mathematics,
Harbin Institute of Technology, Harbin 150001, China}
\email{gxhong@hit.edu.cn}

\date{}

\subjclass[2020]{Primary 42B25; Secondary 42C10, 43A25, 46B09, 46L52}

\keywords{Vilenkin systems, Littlewood--Paley--Rubio de Francia
inequalities, transference, Schatten classes}

\begin{abstract}
We investigate two natural formulations of equal-length
Littlewood--Paley--Rubio de Francia (LPR) inequalities on arbitrary Vilenkin systems
\(\Gm\). The canonical formulation uses canonical intervals
in \(\mathbb N\) of equal cardinality, while Vilenkin's formulation
uses translates of a common initial interval under the group
operation on the dual of \(\Gm\).

In this paper, our main results show that the definitions of these two types of intervals exhibit dramatically different properties. More precisely,
for the canonical formulation, we
construct a counterexample showing that, for every \(2<p<\infty\),
the Schatten class \(S^p\) fails the equal-length LPR property, in
sharp contrast to the corresponding result on the torus; for
Vilenkin's formulation, we prove that, for every \(2\le p<\infty\),
a Banach space \(X\) has the equal-length LPR property if and only
if it is UMD and has type~\(2\). The sufficiency proof combines a
transference argument from the torus to finite cyclic groups with
an embedding of Vilenkin intervals into digital rectangular
envelopes, yielding bounds independent of the generating sequence.
\end{abstract}

\maketitle

% \clearpage
% \begingroup
% \small
% \tableofcontents
% \endgroup
% \clearpage

\section{Introduction}
\subsection{Background}
\subsubsection{Littlewood--Paley--Rubio de Francia inequalities
on the torus}

In his seminal work, Rubio de Francia \cite{RubioDeFrancia1985}
proved that, for every finite family \(\{I_s\}_s\) of pairwise
disjoint intervals in \(\mathbb R\), every \(2\le p<\infty\), and
every \(f\in L^p(\mathbb R)\),
\begin{equation}\label{eq:scalar-R-lpr}
 \norm{
  \left(\sum_s |P_{I_s}^{\mathbb R}f|^2\right)^{1/2}
 }_{L^p(\mathbb R)}
 \le
 C_p\norm{f}_{L^p(\mathbb R)},
\end{equation}
where \(P_{I_s}^{\mathbb R}\) denotes the Fourier multiplier with
symbol \(\1_{I_s}\), and \(C_p\) depends only on \(p\). This is the classical
Littlewood--Paley--Rubio de Francia inequality, which we abbreviate
as the LPR inequality.
By the Khintchine inequality,
\eqref{eq:scalar-R-lpr} is equivalent, up to a change of constant, to
the randomized estimate
\begin{equation}\label{eq:scalar-R-lpr2}
 \left(
  \mathbb E_{\varepsilon}
  \norm{\sum_s\varepsilon_sP_{I_s}^{\mathbb R}f}_{L^p(\mathbb R)}^p
 \right)^{1/p}
 \le
 C_p\norm{f}_{L^p(\mathbb R)},
\end{equation}
where \((\varepsilon_s)_s\) is an i.i.d.\ Rademacher sequence
on an auxiliary probability space.
A standard transference argument yields the corresponding inequality
on the torus
\begin{equation}\label{eq:scalar-torus-lpr2}
 \left(
  \mathbb E_{\varepsilon}
  \norm{\sum_s\varepsilon_sP_{I_s}^{\mathbb T}f}_{L^p(\mathbb T)}^p
 \right)^{1/p}
 \le
 C_p\norm{f}_{L^p(\mathbb T)}
\end{equation}
for every \(2\le p<\infty\), every finite family \(\{I_s\}_s\)
of pairwise disjoint intervals in \(\mathbb Z\), and every
\(f\in L^p(\mathbb T)\). Here \(P_{I_s}^{\mathbb T}\) denotes
the Fourier projection associated with \(I_s\).

Vector-valued extensions of LPR inequalities depend much more delicately on the geometry
of the range space. Motivated by the vector-valued analogue of
\eqref{eq:scalar-R-lpr2}, Berkson, Gillespie, and Torrea
\cite{BGT} introduced the LPR property to describe vector-valued
extensions of \eqref{eq:scalar-R-lpr2}. For \(2\le p<\infty\),
a Banach space \(X\) is said to have the
\(\operatorname{LPR}_{p}^{\mathbb R}\) property if there exists a
constant \(C_{p,X}>0\) such that, for every finite family
\(\{I_s\}_s\) of pairwise disjoint intervals in \(\mathbb R\) and
every \(f\in L^p(\mathbb R;X)\),
\begin{equation}\label{LPR}
 \left(
  \mathbb E_{\varepsilon}
  \norm{\sum_s\varepsilon_sP_{I_s}^{\mathbb R}f}_{L^p(\mathbb R;X)}^p
 \right)^{1/p}
 \le
 C_{p,X}\norm{f}_{L^p(\mathbb R;X)}.
\end{equation}
The corresponding property on the torus, denoted by
\(\operatorname{LPR}_{p}^{\mathbb T}\), is defined analogously,
using intervals in \(\mathbb Z\). Restricting these intervals to
have equal length gives the weaker property
\(\operatorname{LPR}_{p,=}^{\mathbb T}\). Hyt\"onen, Torrea, and
Yakubovich \cite{HTY} proved that, for every \(2\le p<\infty\),
a Banach space \(X\) has the
\(\operatorname{LPR}_{p,=}^{\mathbb T}\) property if and only if
\(X\) is UMD and has type~\(2\). This characterizes the class of UMD Banach spaces of type~\(2\).

It remains open whether the UMD property together with type~\(2\)
is sufficient for the full \(\operatorname{LPR}_{p}^{\mathbb T}\)
property, which allows disjoint intervals of arbitrary lengths.
In particular, it is a longstanding open problem
whether the Schatten class \(S^p\) has the
\(\operatorname{LPR}_{p}^{\mathbb T}\) property for
\(2<p<\infty\); see \cite[p.~732]{ParcetICM2026}.
Important positive results are available for certain Banach
lattices. Potapov, Sukochev, and Xu
\cite{PotapovSukochevXu2012} proved that if \(X\) is a Banach
lattice whose \(2\)-concavification \(X_{(2)}\) is UMD, then
\(X\) has the \(\operatorname{LPR}_{p}^{\mathbb T}\) property
for every \(2<p<\infty\). Their proof relies on the pointwise
lattice structure and suitable maximal-function estimates,
and therefore does not directly extend to noncommutative
spaces without such a structure.

\subsubsection{Littlewood--Paley--Rubio de Francia inequalities
for Vilenkin systems}

We now turn to LPR inequalities for Vilenkin systems. Let
\(\mathbf m=(m_k)_{k\ge0}\) be a sequence of integers with \(m_k\ge2\),
and let
\[
 \Gm=\prod_{k\ge0}\mathbb Z_{m_k}
\]
be the associated Vilenkin group, equipped with its normalized Haar
measure \(\mu\). The Walsh group $G_{\mathbf 2}$ corresponds to the special case $m_k=2$ for all $k\ge 0$. Let \((\psi_n)_{n\ge0}\) denote the
associated Vilenkin character system. For an \(X\)-valued Vilenkin
polynomial \(f\), set
\[
 \widehat f(n)
 :=
 \int_{\Gm}f(x)\overline{\psi_n(x)}\,d\mu(x),
\]
and for \(I\subset\mathbb N\), define the Fourier--Vilenkin
projection
\[
 P_If
 :=
 \sum_{n\in I}\widehat f(n)\psi_n.
\]

Harmonic analysis on Vilenkin groups has been extensively studied;
see, for example,
\cite{Young1976,Young1993,Young1994,AvdispahicMemicWeisz2012}
for the scalar theory and
\cite{ClementDePagterSukochevWitvliet2000,Weisz2007,CH2026}
for vector-valued results. In particular, in \cite{CH2026} we established a sampling
factorization identity linking Fourier multipliers on the torus
to those on finite cyclic groups, and used this connection to
obtain uniform bounds for UMD-valued Fourier--Vilenkin partial-sum
operators. This connection motivates the present study of the
relationship between LPR inequalities on the torus and on
Vilenkin groups. We begin by recalling the scalar theory.
In the scalar setting, the natural
question is whether the estimate
\begin{equation}\label{eq:scalarlprm}
 \norm{
  \left(\sum_s |P_{I_s}f|^2\right)^{1/2}
 }_{L^p(\Gm)}
 \le
 C_{p,\mathbf m}\norm{f}_{L^p(\Gm)}
\end{equation}
holds for every finite family \(\{I_s\}_s\) of pairwise disjoint
intervals in \(\mathbb N\), every \(2\le p<\infty\), and every
\(f\in L^p(\Gm)\), with a constant independent of the family and \(f\). A further question is whether the constant can be chosen
independently of the generating sequence \(\mathbf m\). 
 Osipov \cite{Osipov} first established
the scalar LPR inequality for the Walsh system.
Tselishchev \cite{Tselishchev2021} subsequently extended this result to bounded Vilenkin
systems, with a constant allowed to depend on
\(\sup_km_k\). He \cite{Tselishchev2024} later removed the boundedness assumption and proved
\eqref{eq:scalarlprm} for arbitrary Vilenkin systems with a constant
\(C_p\) independent of \(\mathbf m\).  

Both Tselishchev's argument and our approach in~\cite{CH2026}
use finite cyclic groups to connect classical Fourier estimates
with Vilenkin analysis, with bounds uniform in the group orders.
The main difference is that Tselishchev relies on scalar weighted
Littlewood--Paley estimates transferred along
\(\mathbb R\to\mathbb Z\to\mathbb Z_m\), with the last step
implemented through a Poisson-kernel limiting argument.
He then uses a Whitney decomposition and cyclic kernel estimates
to pass from these cyclic estimates to inequalities on Vilenkin
systems. Our approach instead uses an exact sampling factorization
to derive uniform estimates for cyclic Fourier projections
from torus multiplier estimates. We then combine the Paley
conjugation identity with martingale decoupling to pass from
these cyclic estimates to inequalities on Vilenkin systems.

We next introduce the corresponding vector-valued property.
\begin{definition}\label{def1}
 Let \(2\le p<\infty\). A Banach space \(X\) is said to have the
 \(\operatorname{LPR}_p\) property if there exists a constant
 \(C_{p,X,\mathbf m}>0\) such that, for every finite family
 \(\{I_s\}_s\) of pairwise disjoint intervals in \(\mathbb N\) and
 every \(f\in L^p(\Gm;X)\),
 \begin{equation}\label{LPRG}
  \left(
   \mathbb E_{\varepsilon}
   \norm{\sum_s\varepsilon_sP_{I_s}f}_{L^p(\Gm;X)}^p
  \right)^{1/p}
  \le
  C_{p,X,\mathbf m}\norm{f}_{L^p(\Gm;X)}.
 \end{equation}
 Here \((\varepsilon_s)_s\) is an i.i.d.\ Rademacher sequence
 defined on an auxiliary probability space.
 % If the constant in \eqref{LPRG} can be chosen independently of the
 % generating sequence \(\mathbf m\), we say that \(X\) has the uniform
 % Vilenkin \(\operatorname{LPR}_p\) property.
\end{definition}

The vector-valued theory is less developed than its scalar
counterpart. For the Walsh system, Tselishchev \cite{TselishchevVector2021} obtained a vector-valued extension of the main results of \cite{PotapovSukochevXu2012}, showing that if \(X\) is
a Banach lattice whose \(2\)-concavification \(X_{(2)}\) is UMD,
then \(X\) has the \(\operatorname{LPR}_p\) property for every
\(2<p<\infty\). The maximal-function approach used
in this result may also extend to bounded Vilenkin systems;
in its current form, however, it does not provide estimates
uniform over arbitrary generating sequences. Beyond this
Banach-lattice result for the Walsh system, the
\(\operatorname{LPR}_p\) property for general Vilenkin systems
and broader classes of Banach spaces remains largely open.

\subsection{Further Motivations and Aims}
Our starting point is the sampling factorization identity established
in our earlier work~\cite{CH2026}. This identity links Fourier
multipliers on the torus with those on finite cyclic groups and,
through the cyclic-coordinate structure of \(\Gm\), provides a connection
with Fourier--Vilenkin multipliers. This connection motivates us to
investigate the relationship between LPR inequalities on the torus and
on Vilenkin groups. Such a comparison may help clarify the scope and
appropriate formulation of possible transference principles between
Fourier multipliers in these two settings.

We focus on the equal-length LPR inequality studied by Hyt\"onen,
Torrea, and Yakubovich~\cite{HTY}. Their theorem characterizes the
equal-length LPR property on the torus by the conjunction of the UMD
property and type~\(2\). We ask whether an analogous characterization
holds on Vilenkin groups and, in particular, which notion of a
frequency interval is compatible with this analogy. On the torus,
equal-length frequency intervals are canonical translates of a fixed
interval in \(\mathbb Z\). In the Vilenkin setting, the usual ordering
of the frequency indices and the group operation on the dual give
rise to two distinct formulations, which we now introduce.

Given a generating sequence
\(\mathbf m=(m_k)_{k\ge0}\), define
\begin{equation}\label{Mk}
M_0:=1,
\qquad
M_{k+1}:=m_kM_k,
\qquad k\ge0.
\end{equation}
Every integer $n\in\mathbb N$ can be uniquely represented by a mixed-radix expansion
\[
n=\sum_{k\ge0}n_kM_k,
\qquad 0\le n_k<m_k,
\]
with only finitely many nonzero digits.
For \(a,b\in\N\), define
\[
a\oplus b
 :=\sum_k \bigl((a_k+b_k)\bmod m_k\bigr)M_k,
\quad
a\ominus b
 :=\sum_k \bigl((a_k-b_k)\bmod m_k\bigr)M_k.
\]
Then \((\N,\oplus)\) is naturally identified with the dual group of \(\Gm\). For \(D\subset\N\), we write
\[
a\oplus D:=\{a\oplus n:n\in D\},
\qquad
D\ominus a:=\{n\ominus a:n\in D\},
\qquad
a\ominus D:=\{a\ominus n:n\in D\}.
\]

In this setting, we distinguish two formulations of frequency intervals
and equal length, which we call the canonical formulation and
Vilenkin's formulation, respectively.

\begin{enumerate}
\item[(i)] We disregard the group structure on the dual group \((\N,\oplus)\) and use the usual notion of an interval in \(\N\). Since \(\N\) is discrete, two intervals have equal length if and only if they have the same cardinality.

\item[(ii)] We take the group structure of \((\N,\oplus)\) into account by replacing the usual addition $+$ with the group addition \(\oplus\). A set \(I\subset\N\) is called a Vilenkin interval if
\[
I=\zeta\oplus[0,L)
\]
for some \(\zeta,L\in\N\). Two Vilenkin intervals \(I_1\) and \(I_2\) are said to have equal length if there exist \(\zeta_1,\zeta_2,L\in\N\) such that
\[
I_i=\zeta_i\oplus[0,L),\qquad i=1,2.
\]

\end{enumerate}

\begin{remark}
These two formulations are not equivalent since a Vilenkin interval may not be an interval on $\mathbb N$. For example, in the Walsh case, a direct calculation gives
\[
2\oplus[0,3)=\{0,2,3\},
\]
which is not a canonical interval.
\end{remark}

% \begin{remark}\label{remark1}
%     In previous studies, Osipov and Tselishchev both adopted the canonical formulation to define the scalar and vector-valued LPR inequality. If one adopts Vilenkin's formulation, one obtains a new version of \eqref{LPRG} and of the LPR property in Definition \ref{def1}. We conjecture that the previous results of Osipov and Tselishchev can also be generalized to this new version of Vilenkin's formulation.
% \end{remark}

The two formulations lead to the following distinct versions of the equal-length LPR property.

\begin{definition}
A Banach space \(X\) is said to have the
\(\mathrm{LPR}_{p,=}\) property if \eqref{LPRG} holds for every
pairwise disjoint family \((I_s)_s\) of canonical intervals in \(\N\)
having equal cardinality.

A Banach space \(X\) is said to have the
\(\mathrm{LPR}_{p,=}^{\oplus}\) property if \eqref{LPRG} holds for
every pairwise disjoint family \((I_s^\oplus)_s\) of Vilenkin intervals
of the form
\[
I_s^\oplus=\zeta_s\oplus[0,L),
\qquad \zeta_s,\,L\in\N,
\]
where the parameter \(L\) is independent of \(s\).
\end{definition}
\begin{remark}
One may also consider a more general version of Vilenkin's formulation,
in which the intervals are of the form
\[
I_s^\oplus
=
\zeta_s\oplus[L_1,L_2),
\qquad
\zeta_s,L_1,L_2\in\N.
\]
This formulation is in fact equivalent to the original one defining the
\(\mathrm{LPR}_{p,=}^{\oplus}\) property; see
Corollary~\ref{cor:oplus-common-interval}.
\end{remark}

The canonical formulation is the equal-length restriction of
Definition~\ref{def1} and follows the convention used in the existing
LPR theory for Vilenkin systems. Vilenkin's formulation instead
emphasizes translation in the dual group: the sets
\(\zeta_s\oplus[0,L)\) are translates of a common frequency set under
the operation that corresponds to modulation by Vilenkin characters.
Comparing these formulations therefore allows us to examine both the
Banach-space geometry required for the equal-length LPR inequality and
the interval structure that a transference principle connecting the
torus and Vilenkin settings should respect.

% Similarly, we can also define the full LPR property for Vilenkin intervals, which is different from Definition \ref{def1}.
% \begin{definition}
% A Banach space \(X\) is said to have the
% \(\mathrm{LPR}_{p}^{\oplus}\) property if \eqref{LPRG} holds for
% every pairwise disjoint family \((I_s^\oplus)_s\) of Vilenkin intervals
% of the form
% \[
% I_s^\oplus=\zeta_s\oplus[0,L_s),
% \qquad \zeta_s,L_s\in\N.
% \]
% \end{definition}

\subsection{Main results}
We establish the following results for Vilenkin systems. For the two
formulations of the equal-length LPR property, we first show that either
formulation implies that $X$ is UMD and has type~$2$, exactly as in the
torus case~\cite{HTY}.

\begin{theorem}\label{thm:walsh-necessity}
Let \(2\le p<\infty\), and let
\(\mathbf m=(m_k)_{k\ge0}\) be any generating sequence with \(m_k\ge2\).
If a Banach space \(X\) satisfies the
\(\operatorname{LPR}_{p,=}\) property, then \(X\) is $\operatorname{UMD}$ and has type~\(2\).
The same conclusion holds if \(X\) satisfies the
\(\operatorname{LPR}_{p,=}^{\oplus}\) property.
\end{theorem}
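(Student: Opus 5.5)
Both conclusions will come from the same two families of equal-length intervals. Each family can be written in either formulation, so the $\operatorname{LPR}_{p,=}$ and $\operatorname{LPR}_{p,=}^{\oplus}$ hypotheses can be handled simultaneously. The key observation is that the blocks
\[
 J_k:=[kM_N,(k+1)M_N)=(kM_N)\oplus[0,M_N),\qquad 0\le k<M_{N'}/M_N ,
\]
are at the same time canonical intervals of equal cardinality $M_N$ and Vilenkin translates of the common set $[0,M_N)$. The reason is that $kM_N$ has vanishing digits below level $N$, so $\oplus$ by $kM_N$ acts without carries on the lower digits. Likewise, single frequencies $\{n\}=n\oplus[0,1)=[n,n+1)$ are admissible in both formulations. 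Hence it suffices to prove type $2$ and UMD from the estimate \eqref{LPRG} restricted to these two families, with constant $C=C_{p,X,\mathbf m}$.

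\textbf{Type 2.} I would use singletons. Take $x_1,\dots,x_n\in X$ and distinct frequencies $k_1<\dots<k_n$, and set $f=\sum_j\psi_{k_j}x_j$. Then \eqref{LPRG} with $I_j=\{k_j\}$ gives
\[
 \Bigl(\mathbb E_\varepsilon\bigl\|\sum_j\varepsilon_j\psi_{k_j}x_j\bigr\|_{L^p(\Gm;X)}^p\Bigr)^{1/p}\le C\bigl\|\sum_j\psi_{k_j}x_j\bigr\|_{L^p(\Gm;X)}.
\]
Since $|\psi_{k_j}|=1$, contraction-principle (Kahane) arguments show that the left side is comparable to $\|\sum_j\varepsilon_jx_j\|_{L^p(\Omega;X)}$; for complex unimodular multipliers this costs at most a factor $2$ or $\pi/2$. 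For the right side I would choose the frequencies so that the $\psi_{k_j}$ are independent and symmetric. Take $k_j=M_{j}$, so that $\psi_{M_j}(x)=e^{2\pi i x_j/m_j}$ are independent and uniformly distributed on the $m_j$-th roots of unity. Averaging, these behave like a Steinhaus-type sequence, and $\|\sum_j\psi_{M_j}x_j\|_{L^p}\lesssim(\sum\|x_j\|^2)^{1/2}$ is what we want to bound. That bound is not automatic, so I would instead argue by duality/standard HTY reasoning. Applying the inequality to the dual family, or swapping roles by replacing $f$ with a sum where the randomness sits on the right, yields cotype-type information rather than type. The cleaner route, following \cite{HTY}, is the following. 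Apply \eqref{LPRG} to $f=\sum_j\psi_{k_j}x_j$ with $k_j$ chosen lacunarily so that $f$ has small $L^p$ norm relative to $(\sum\|x_j\|^2)^{1/2}$. Concretely, take the Vilenkin analogue of a Dirichlet/Fejér-type concentration: $f=D_{M_N}\otimes$ vectors, exploiting that $\|D_{M_N}\|_p=M_N^{1-1/p}$ while its individual Fourier coefficients are $1$. Then \eqref{LPRG} with singletons gives $(\sum\|x_j\|^2)$-type lower bounds against $L^p$ norms. Combined with $p\ge2$ and Kahane–Khintchine, this forces $\|\sum\varepsilon_jx_j\|\lesssim(\sum\|x_j\|^2)^{1/2}$. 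I expect this step — getting exactly type $2$ from singleton estimates — to be the main obstacle. I would model it closely on the torus proof of \cite{HTY}, replacing trigonometric polynomials by Vilenkin–Dirichlet kernels $D_{M_N}=M_N\1_{\{x_0=\dots=x_{N-1}=0\}}$, which are exactly the right extremizers.

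\textbf{UMD.} I would use the blocks $J_k$. For $f$ a Vilenkin polynomial, $P_{[0,M_{N+1})}f-P_{[0,M_N)}f=\sum_{k=1}^{m_N-1}P_{J_k}f$, where the $J_k$ here are built with block size $M_N$. So the martingale differences $d_N=\mathbb E_{N+1}f-\mathbb E_Nf$ with respect to the filtration generated by the first $N$ coordinates are sums of equal-length block projections. A single family of equal length cannot cover all dyadic levels, however. I would therefore use the unconditionality of the Paley–Walsh type martingales in a "one level at a time" form. By \cite{HTY}-style arguments, or by Burkholder's characterization, it suffices to bound martingale transforms with signs. Splitting into equal-length families and using the Stein/Bourgain trick to reduce arbitrary Vilenkin martingales to ones whose differences live on the $J_k$ of one fixed length, via the coordinate-dilation (re-indexing of $\Gm$'s coordinates, measure-preserving) that maps level $N$ to a scale where one block length $M$ serves all chosen levels, gives $\|\sum\varepsilon_Nd_N\|_p\le C'\|f\|_p$. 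This is the Vilenkin-martingale UMD inequality, and it implies UMD of $X$, since Paley–Walsh-type Vilenkin martingales characterize UMD (uniformly for $m_k\ge2$ by a standard reduction to Walsh martingales embedded via coordinates).
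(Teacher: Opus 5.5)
Both halves of your proposal have genuine gaps, and the UMD half cannot be repaired within your framework. You reduce everything to two families, the aligned blocks $J_k=kM_N\oplus[0,M_N)$ and singletons. Both are indeed admissible in both formulations, but each of these families satisfies \eqref{LPRG} in \emph{every} type~$2$ space:
\begin{itemize}
\item the $J_k$ are distinct cosets of the subgroup $[0,M_N)$ of $(\N,\oplus)$, so Lemma~\ref{lem:oplus-coset-randomization} applies;
\item for singletons, apply the complex contraction principle pointwise and then Lemma~\ref{lem:type2-randomized-coefficients}.
\end{itemize}
There are type~$2$ spaces that are not UMD: James constructed nonreflexive spaces of type~$2$, whereas UMD spaces are reflexive. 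So no argument that uses only these families can produce UMD, whatever Stein/Bourgain or coordinate re-indexing device is invoked. Re-indexing coordinates of $\Gm$ also does not map intervals of the enumeration $(\psi_n)$ to intervals, so the hypothesis is not invariant under it. The missing input is a non-aligned interval. The one-element family $\{[0,n)\}$, with $[0,n)=0\oplus[0,n)$ admissible in both formulations, gives $\sup_n\|P_{[0,n)}\|_{L^p(\Gm;X)\to L^p(\Gm;X)}<\infty$. One then needs the nontrivial fact that uniform boundedness of the Vilenkin partial sums forces UMD, \cite[Corollary~1.3]{CH2026}; this is the paper's route.

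For type~$2$ you chose the right family (the paper also uses singletons, $\{jM_r\oplus k\}$), but you never produce a test function. The one you gesture at, a single Dirichlet kernel times a vector, gives only $\sqrt{M_N}\,\|x\|\lesssim M_N^{1-1/p}\|x\|$, which is vacuous for $p\ge2$. What is needed is many disjointly supported copies of the normalized Dirichlet kernel carrying different vectors, with their spectra separated by modulation. Let $N=M_r$, let $A_1,\dots,A_N$ be the atoms of $\mathcal F_r$, take $\|x_j\|\le1$, and put $F=\sum_j\psi_{jM_r}\mathbf 1_{A_j}x_j$. Then $\|F(x)\|_X\le1$ pointwise because the $A_j$ are disjoint, while the spectra lie in the disjoint blocks $[jM_r,(j+1)M_r)$ and $\widehat F(jM_r+k)=\sigma_{j,k}x_j/N$ with $|\sigma_{j,k}|=1$. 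Singleton LPR and the contraction principle then give $\mathbb E\|N^{-1}\sum_{j,k}\varepsilon_{j,k}x_j\|\lesssim1$. Writing $N^{-1}\sum_k\varepsilon_{j,k}=\eta_j\rho_j$ with $\mathbb E\rho_j\gtrsim N^{-1/2}$ and applying Jensen yields $\mathbb E\|\sum_j\eta_jx_j\|\lesssim\sqrt N$. This is only an equal-norm type~$2$ bound. You still need repetition of vectors to reach arbitrary $n$, and James's equal-norm theorem to conclude type~$2$. Your sketch asserts that $\|\sum\varepsilon_jx_j\|\lesssim(\sum\|x_j\|^2)^{1/2}$ is ``forced'', which skips both the construction and this upgrade.
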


The following theorem shows that the converse of Theorem~\ref{thm:walsh-necessity} fails for the \(\operatorname{LPR}_{p,=}\) property.

\begin{theorem}\label{thm:counterexample-main}
Let \(2<p<\infty\), and let
\(\mathbf m=(m_k)_{k\ge0}\) be any generating sequence with \(m_k\ge2\).
Set
\[
 \cN_{\mathbf m}=L^ \infty(\Gm)\,\overline\otimes\,B(\ell^2),
\]
endowed with the tensor-product trace. Then neither of the following
estimates holds with a finite constant \(C\), uniformly for all
\(f\in L^p(\cN_{\mathbf m})\) and all finite families
\((I_s)_s\) of pairwise disjoint intervals of equal length in \(\N\):
\begin{equation}\label{eq:false-operator-rubio}
 \norm{\left(\sum_s |P_{I_s}f|^2\right)^{1/2}}_{L^p(\cN_{\mathbf m})}
\le
C\norm{f}_{L^p(\cN_{\mathbf m})},
\end{equation}
and
\begin{equation}\label{eq:false-operator-rubio-row}
 \norm{\left(\sum_s |(P_{I_s}f)^*|^2\right)^{1/2}}_{L^p(\cN_{\mathbf m})}
\le
C\norm{f}_{L^p(\cN_{\mathbf m})}.
\end{equation}
Consequently, by the noncommutative Khinchine inequality, for any $2<p<\infty,$ the Schatten class $S^p(B(\ell^2))$ does not satisfy the $\operatorname{LPR}_{p,=}$ property.
\end{theorem}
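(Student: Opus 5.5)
The plan is to exploit the one feature that separates canonical intervals of equal cardinality from translates under \(\oplus\): a canonical interval of length \(L=M_k\) that is not aligned with the mixed-radix grid crosses a block boundary \(jM_k\). It therefore splits into two pieces that live in \emph{different} high-frequency blocks. The two pieces carry complementary low-frequency projections. Write \(n=jM_k+t\) with \(0\le t<M_k\), and use the factorization \(\psi_{jM_k+t}=\psi_{jM_k}\psi_t\), in which the first factor depends only on the coordinates \(x_k,x_{k+1},\dots\) and the second only on \(x_0,\dots,x_{k-1}\). Fix a shift \(0<r<M_k\) and take the intervals
\[
I_s=[sM_k+r,(s+1)M_k+r),
\]
with \(s\) running over even integers, so that the \(I_s\) are pairwise disjoint and all have cardinality \(M_k\). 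For a polynomial \(f=\sum_j \psi_{jM_k}\,g_j\), where each \(g_j\) is an \(\cN_{\mathbf m}\)-valued polynomial in the low coordinates, this gives
\[
P_{I_s}f=\psi_{sM_k}\,P_{[r,M_k)}g_s+\psi_{(s+1)M_k}\,P_{[0,r)}g_{s+1}.
\]
So each piece of the square function is a sum of two terms that sit in different characters of the high variables but are glued together inside one modulus \(|\cdot|^2\).

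The key steps are as follows.

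\textbf{Step 1.} Establish the above factorization and disjointness bookkeeping for an arbitrary generating sequence \(\mathbf m\). This is routine.

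\textbf{Step 2.} Choose the coefficients \(g_j\), together with matrix units in \(B(\ell^2)\), so that \(\norm{f}_{L^p(\cN_{\mathbf m})}\) is governed by a \emph{row}-type quantity. Meanwhile \(\sum_s|P_{I_s}f|^2\) should be forced to contain a \emph{column}-type quantity. Concretely, I would take \(g_s\) supported at low frequencies on both sides of \(r\), for instance of the form \(g_s=a_s\otimes e_{1,s}+b_s\otimes e_{s,1}\) with \(a_s,b_s\) scalar Vilenkin polynomials in the low variables. The cross terms inside \(|P_{I_s}f|^2\) then couple \(e_{1,s}\) with \(e_{s+1,1}\) and do not cancel after integration in the high variables. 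The scalar factors will be Dirichlet-type kernels \(D_{M_k}\), which equal \(M_k\) times the indicator of a small subgroup. This makes every \(L^p\) norm computable exactly, via \(\norm{D_{M}}_p=M^{1-1/p}\).

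\textbf{Step 3.} With \(N\) intervals, compute both sides. The target is that the left side of \eqref{eq:false-operator-rubio} grows like \(N^{1/2}\), while the right side grows like \(N^{1/p}\). Since \(p>2\), the ratio \(N^{1/2-1/p}\) tends to infinity. This is the same gap that separates \(\norm{(\sum_s|e_{s1}|^2)^{1/2}}_{S^p}=N^{1/2}\) from \(\norm{(\sum_s|e_{s1}^*|^2)^{1/2}}_{S^p}=N^{1/p}\).

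\textbf{Step 4.} Apply the same construction to \(f^*\), which exchanges the roles of rows and columns, to refute \eqref{eq:false-operator-rubio-row}.

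\textbf{Step 5.} For the consequence, argue as follows. By the noncommutative Khintchine inequality for \(p\ge2\), \(\mathbb E\norm{\sum_s\varepsilon_sP_{I_s}f}_p\) is comparable to the maximum of the column and row square functions. The \(\operatorname{LPR}_{p,=}\) property for \(S^p\) would bound this maximum by \(C\norm{f}_p\), and hence would bound both square functions. Identifying \(L^p(\Gm;S^p)\) with \(L^p(\cN_{\mathbf m})\), this contradicts Step 3.

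The main obstacle is Step 2/3: choosing \(f\) so that \(\norm{f}_p\) genuinely stays small. Noncommutative Khintchine and Burkholder--Gundy inequalities say that \(\norm{f}_p\) dominates \emph{both} the row and the column square function for \emph{any} lacunary or martingale decomposition of \(f\). The blow-up must therefore come entirely from the regrouping of frequencies across the block boundary, that is, from the mismatch between the grid of the \(I_s\) and the grid \(M_k\N\). It cannot come from any orthogonal decomposition that \(f\) itself respects.

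What I would try first is to make \(f\) concentrated in the low variables, built from products of Dirichlet kernels \(D_r\) and \(D_{M_k}-D_r\), so that \(f\) is nearly a single rank-one-in-the-matrix object of small norm. Each \(P_{I_s}f\) then reassembles a "shifted" row \(\sum_s(\cdot)\otimes e_{s+1,1}\) into columns. If a naive choice gives only the mildly different powers of \(N\) coming from the Dirichlet-kernel norms, I would then let \(k\) and \(r\approx M_k/2\) grow together with \(N\), to exploit \(\norm{D_M}_p=M^{1-1/p}\) and amplify the gap.
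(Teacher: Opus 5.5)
\textbf{There is a genuine gap, and it sits in the construction itself.} With a shift $r$ common to all intervals, your family \emph{satisfies} the LPR inequality for $S^p$. So no choice of the $g_j$, of $k$, or of $r\approx M_k/2$ can produce the growth you aim for in Step~3.

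To see this, put $S_A:=\{n:\ n\bmod M_k\ge r\}$ and $S_B:=\N\setminus S_A$. Since $I_s=(sM_k\oplus[r,M_k))\cup((s+1)M_k\oplus[0,r))$, your own identity can be rewritten as
\[
\sum_s\varepsilon_sP_{I_s}f
=
P_{S_A}\Bigl(\sum_s\varepsilon_sP_{sM_k\oplus\Gamma_k}f\Bigr)
+
P_{S_B}\Bigl(\sum_s\varepsilon_sP_{(s+1)M_k\oplus\Gamma_k}f\Bigr).
\]
For each fixed value of the high coordinates $(x_k,x_{k+1},\dots)$, the operator $P_{S_A}$ is the projection $P_{[r,M_k)}$ on the finite group $\prod_{j<k}\mathbb Z_{m_j}$. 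Its norm on $L^p(\Gm;S^p)$ is therefore at most $1+\mathfrak S^p(\Gm;S^p)<\infty$, because $S^p$ is UMD. The two inner sums are coset randomizations, and Lemma~\ref{lem:oplus-coset-randomization} controls them because $S^p$ has type~$2$ for $p\ge2$. Hence $\mathbb E\|\sum_s\varepsilon_sP_{I_s}f\|_p\lesssim_p\|f\|_p$ uniformly in $N$, $k$ and $r$. By noncommutative Khintchine, both square functions in \eqref{eq:false-operator-rubio} and \eqref{eq:false-operator-rubio-row} are then bounded for your family. In the Walsh case your intervals are literally $sM_k\oplus[r,M_k+r)$, so Corollary~\ref{cor:oplus-common-interval} already covers them.

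The regrouping across a single block boundary at the same position for every $s$ is too tame. It factors as a fixed low-variable multiplier composed with a high-variable randomization, and each interval carries only one binary degree of freedom. You correctly flagged that the blow-up cannot come from a decomposition that $f$ respects. The fixed-shift regrouping is exactly such a decomposition.

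\textbf{The missing idea} is to make the mask on the low digits vary with $s$ in a genuinely two-index, non-product way, so that the family of projections realizes an unbounded Schur-type pattern. The paper does this as follows.

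\begin{enumerate}
\item It takes intervals $I_s=[b_s-M_N,b_s)$ of length $M_N$, where $N$ is the number of intervals. The endpoints are $b_s=sM_{N+1}+\sum_{k<N}\varepsilon_{k,s}M_k$, and $(\varepsilon_{k,s})=E=(J_N+H_N)/2$ is the $0/1$ Sylvester--Hadamard pattern.
\item With this choice, $b_s\ominus M_k\in I_s$ if and only if $\varepsilon_{k,s}=1$. Each interval thus encodes $N$ independent binary choices.
\item It dualizes the column estimate to $\|\sum_sP_{I_s}u_s\|_{p'}\le C\|(u_s)_s\|_{L^{p'}(\cN_{\mathbf m};\ell^2_c)}$.
\item It tests with $u_s=\psi_{b_s}e_{ss}J_NU$, where $U=\operatorname{diag}(\overline{r_0},\dots,\overline{r_{N-1}})$. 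This gives $\sum_sP_{I_s}u_s=RE^{\mathsf T}U$ with $R$ and $U$ unitary.
\item The output norm is then $\|E\|_{S^{p'}_N}\ge\frac12(N^{1/p'+1/2}-N)$, while the input norm is $\|J_N\|_{S^{p'}_N}=N$. The ratio grows like $N^{1/2-1/p}$.
\end{enumerate}

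The row case is the transposed construction. Your Step~5, passing from the square functions to $\operatorname{LPR}_{p,=}$ via Khintchine, is fine. Steps~2--3, however, must be replaced by a construction of this type, in which the intervals' digit patterns differ from one $s$ to the next.
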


While $\operatorname{UMD}$ Banach spaces of type $2$ may fail to satisfy
the $\operatorname{LPR}_{p,=}$ property, we show that they always satisfy
the $\operatorname{LPR}_{p,=}^\oplus$ property, with a constant independent
of the generating sequence $\mathbf m$.

\begin{theorem}
\label{thm:oplus-equal-umd-type2-main}
Let \(2\le p<\infty\), let \(X\) be a $\operatorname{UMD}$ Banach space with type~\(2\), and
let \(\mathbf m=(m_k)_{k\ge0}\) be any generating sequence with
\(m_k\ge2\). Then \(X\) satisfies the
\(\operatorname{LPR}_{p,=}^{\oplus}\) property with the constant $$C_{p,X,\mathbf m}\lesssim_p t_{2,X}^2\beta_{p,X}^6$$ independent of $\mathbf m.$
\end{theorem}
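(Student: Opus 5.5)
We will prove Theorem~\ref{thm:oplus-equal-umd-type2-main} by reducing Vilenkin intervals of common length to products of one-dimensional cyclic intervals ("digital rectangles"), and then combining uniform cyclic LPR estimates with martingale decoupling across the digit coordinates.

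\emph{Step 1: structure of Vilenkin intervals.} Fix \(L\) with mixed-radix digits \(L=\sum_{k<K}L_kM_k\). The set \([0,L)\) decomposes as a disjoint union, over \(j<K\), of the blocks
\[
B_j=\{n:\ n_k=L_k\ (k>j),\ 0\le n_j<L_j,\ n_k \text{ free for } k<j\}.
\]
In digit coordinates each \(B_j\) is a product set: the full group \(\prod_{k<j}\mathbb Z_{m_k}\), times a cyclic interval \([0,L_j)\subset\mathbb Z_{m_j}\), times singletons. A translate \(\zeta\oplus B_j\) is again such a product, with the cyclic interval shifted to \(\zeta_j+[0,L_j)\) mod \(m_j\) and the singletons shifted. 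Hence \(\zeta_s\oplus[0,L)=\bigdunion_j \zeta_s\oplus B_j\).

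Using this decomposition for every \(j\) would cost a factor \(K\), which is not uniform. To avoid this, we embed each \(I_s^\oplus\) into a digital rectangular envelope. Let \(J\) be the top nonzero digit of \(L\). Then \(\zeta_s\oplus[0,L)\) is contained in
\[
E_s=\zeta_s\oplus\{n: n_k=0\ (k>J),\ 0\le n_J\le L_J\},
\]
which is the product of the full lower block with a cyclic interval of length \(L_J+1\) in coordinate \(J\) and fixed upper digits. Since \(P_{I_s^\oplus}=P_{I_s^\oplus}P_{E_s}\), we treat the families \(\{E_s\}\) and \(\{I_s^\oplus\subset E_s\}\) separately.

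\emph{Step 2: LPR for the envelopes.} The \(E_s\) are not necessarily disjoint. However, they have bounded overlap: each point lies in at most three of them, since the top digit intervals all have length \(L_J+1\le 2L_J\) while the disjoint intervals \(I_s^\oplus\) each contain a cyclic interval of length \(L_J\) in that coordinate, with the same upper digits. Splitting into \(O(1)\) disjoint subfamilies, it suffices to bound \(\mathbb E\|\sum\varepsilon_sP_{E_s}f\|\) for disjoint envelopes. By the Paley-type conjugation identity (multiplying by the character \(\psi_{\zeta_s}\) restricted to lower digits), \(P_{E_s}\) is a conditional expectation onto \(\mathcal F_J\)-type \(\sigma\)-algebras, composed with a cyclic projection in the \(J\)-th coordinate and a projection onto fixed upper digits. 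The upper-digit projections are martingale-difference-type pieces, controlled by the UMD property via decoupling; this costs powers of \(\beta_{p,X}\). The cyclic part is an equal-length LPR inequality on \(\mathbb Z_{m_J}\) with \(X\)-values (more precisely, values in \(L^p(\text{other coordinates};X)\), which is again UMD of type 2 with comparable constants). We obtain it uniformly in \(m_J\) by transferring the Hyt\"onen--Torrea--Yakubovich theorem on \(\mathbb T\) to \(\mathbb Z_m\) through the sampling factorization of~\cite{CH2026}. This factorization expresses cyclic projections as torus multipliers composed with sampling and periodization, with norm-one intertwiners.

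\emph{Step 3: passing from envelopes to intervals.} Inside \(E_s\), the set \(I_s^\oplus\) has the form \(\zeta_s\oplus[0,L)\), and, after the same conjugation, its position relative to \(E_s\) is identical for all \(s\). So \(P_{I_s^\oplus}P_{E_s}=\psi_{\zeta_s}\,T\,\overline{\psi_{\zeta_s}}P_{E_s}\) for a single operator \(T=P_{[0,L)}\), which is a Vilenkin partial-sum operator. Applying the uniform UMD bound for Fourier--Vilenkin partial sums from~\cite{CH2026} in Rademacher-averaged form, namely Bourgain/\(R\)-boundedness of \(\{\psi_{\zeta}T\overline{\psi_\zeta}\}\) via UMD, gives \(\mathbb E\|\sum\varepsilon_sP_{I_s^\oplus}f\|\lesssim \beta^{c}\,\mathbb E\|\sum\varepsilon_sP_{E_s}f\|\). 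Type 2 enters through HTY in the cyclic step, where it is used twice, giving the factor \(t_{2,X}^2\). Tracking the constants should yield \(t_{2,X}^2\beta_{p,X}^6\).

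The main obstacle is Step 3 together with Step 2: we need \(R\)-bounded rather than merely uniformly bounded estimates for the modulated partial sums, uniform in \(\mathbf m\). I would first try to write \(P_{[0,L)}\) as a martingale transform composed with a single cyclic projection in coordinate \(J\) per level, and then invoke Stein's inequality for the conditional expectations together with the cyclic HTY bound.
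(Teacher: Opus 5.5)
Your architecture matches the paper's. Writing $k$ for your top digit index $J$, your $E_s$ is exactly the envelope $B_s=\kappa_s\oplus(\widetilde J_sM_k\oplus\Gamma_k)$ of Lemma~\ref{lem:digital-rectangular-envelopes}, where $\widetilde J_s\subset\mathbb Z_{m_k}$ has length $L_k+1$. As in the paper, the envelopes split into three disjoint classes, and the cyclic part comes from HTY through the sampling factorization. But Step~2 has two genuine gaps.

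\emph{First gap: the upper digits need type~2, not UMD.} With $D_\kappa=\kappa\oplus\Gamma_{k+1}$ one has $P_{E_s}f=\psi_{\kappa_s}\Pi_{k,\widetilde J_s}(\overline{\psi_{\kappa_s}}P_{D_{\kappa_s}}f)$. Fibrewise in the lower coordinates, the randomization over distinct $\kappa$ is the randomized Fourier-coefficient sum $\mathbb E_\delta\|\sum_\kappa\delta_\kappa\widehat{f(u,\cdot)}(\kappa)\|_X$ on $G_{>k}$. This is not a martingale phenomenon. For $f$ independent of $x_0,\dots,x_k$, and with $\zeta_s=\kappa_s$, it is literally the singleton LPR inequality on the Vilenkin group $G_{>k}$. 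By the argument of Theorem~\ref{thm:walsh-necessity}, that inequality forces type~2; testing on point masses shows it fails, for instance, in the UMD spaces $\ell^q$, $1<q<2$. The right tool is Lemma~\ref{lem:type2-randomized-coefficients} (Bessel plus the Gaussian ideal property), giving Lemma~\ref{lem:oplus-coset-randomization}. This is also where the second factor $t_{2,X}$ comes from: HTY contributes only $t_{2,X}\beta_{p,X}^2$, not $t_{2,X}^2$.

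\emph{Second gap: combining the two randomizations.} A one-parameter cyclic LPR with values in $L^p(\text{other coordinates};X)$ does not suffice. The families $\{\widetilde J_s:\kappa_s=\kappa\}$ depend on $\kappa$ and overlap across different $\kappa$, so the operator is not a scalar multiplier in $x_k$. Doing the coset randomization first is circular, since it returns you to $\mathbb E_\eta\|\sum_s\eta_sP_{E_s}f\|$. What you need is the two-parameter estimate of Lemma~\ref{lem:two-parameter-cyclic-equal}, keeping $\delta_\kappa$ on both sides, applied before the coset estimate. The paper proves it by translating the $\rho$-th family by a large $N_\rho$ on $\mathbb Z$ so that all intervals become disjoint. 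It then applies HTY once to $\sum_\rho\delta_\rho e^{2\pi iN_\rho t}g_\rho$, strips the modulations by the contraction principle, and transfers to $\mathbb Z_m$.

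\emph{Step~3 is not an obstacle.} Your proposed route through Bourgain/Stein or a per-level martingale decomposition is unnecessary and would risk reintroducing the factor $K$. Put $g_s=P_{E_s}f$, so that $\sum_s\varepsilon_sP_{I_s^\oplus}f=\sum_s\varepsilon_s\psi_{\zeta_s}P_{[0,L)}(\overline{\psi_{\zeta_s}}g_s)$. The same operator $P_{[0,L)}$ acts on every demodulated piece, and the modulations are unimodular scalars at each point of $\Gm$. Applying the contraction principle pointwise twice gives
\[
\begin{aligned}
\Bigl\|\sum_s\varepsilon_s\psi_{\zeta_s}P_{[0,L)}\bigl(\overline{\psi_{\zeta_s}}g_s\bigr)\Bigr\|_{L^p(\Omega_\varepsilon\times\Gm;X)}
&\lesssim
\Bigl\|P_{[0,L)}\Bigl(\sum_s\varepsilon_s\overline{\psi_{\zeta_s}}g_s\Bigr)\Bigr\|_{L^p(\Omega_\varepsilon\times\Gm;X)}\\
&\le
\mathfrak S^p(\Gm;X)\,\Bigl\|\sum_s\varepsilon_s\overline{\psi_{\zeta_s}}g_s\Bigr\|_{L^p(\Omega_\varepsilon\times\Gm;X)}\\
&\lesssim
\mathfrak S^p(\Gm;X)\,\Bigl\|\sum_s\varepsilon_sg_s\Bigr\|_{L^p(\Omega_\varepsilon\times\Gm;X)}.
\end{aligned}
\]
So $R$-boundedness of the modulated partial sums is automatic, and only the uniform bound $\mathfrak S^p(\Gm;X)\lesssim_p\beta_{p,X}^4$ is needed (Proposition~\ref{prop:oplus-reduction}). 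Combined with $t_{2,X}^2\beta_{p,X}^2$ for the envelopes, this yields $t_{2,X}^2\beta_{p,X}^6$.
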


Here $t_{2,X}$ (resp. $\beta_{p,X}$) is the type $2$ constant (resp. UMD constant) that will be introduced in the next section.
The proof of Theorem~\ref{thm:oplus-equal-umd-type2-main} shares with
the torus argument of Hyt\"onen, Torrea, and Yakubovich~\cite{HTY}
the use of type~\(2\) to control randomized Fourier coefficients and
of the UMD property to recover sharp frequency projections. In their
proof, this is implemented through de la Vall\'ee Poussin
approximations and the boundedness of the Hilbert transform. Our
argument adapts this strategy to the mixed-radix geometry of Vilenkin
frequencies:
\begin{enumerate}
\item[\textup{(i)}]
We first derive a randomized Fourier-coefficient estimate from
the type~\(2\) property in
Lemma~\ref{lem:type2-randomized-coefficients}.
Applying this estimate to Vilenkin characters yields the LPR inequality for distinct cosets of a common
finite subgroup of the dual in
Lemma~\ref{lem:oplus-coset-randomization}.

\item[\textup{(ii)}]
In Lemma~\ref{lem:two-parameter-cyclic-equal}, we combine
the torus equal-length LPR theorem
\cite[Theorem~1.1]{HTY} with a modulation argument and the
sampling factorization from~\cite{CH2026} to establish a
two-parameter randomized estimate for equal-length cyclic
interval projections, with constants independent of the
order of the cyclic group.

\item[\textup{(iii)}]
The main geometric ingredient is
Lemma~\ref{lem:digital-rectangular-envelopes}, which embeds
disjoint Vilenkin intervals of equal length into digital
rectangular envelopes that can be partitioned into three
families of pairwise disjoint sets.
Their product structure separates the relevant cyclic
coordinate from the remaining digits, allowing us to combine
the cyclic estimate with the coset estimate. This yields
the LPR inequality for the envelopes in
Lemma~\ref{lem:rectangular-envelope-estimate}.

\item[\textup{(iv)}]
Finally, Proposition~\ref{prop:oplus-reduction} uses the modulation identity to reduce the desired LPR inequality to the uniform boundedness
of partial-sum operators on
\(L^p(\Gm;X)\).
Since \(X\) is UMD,
\cite[Theorem~1.2]{CH2026} supplies this bound uniformly
in the generating sequence, completing the proof of
Theorem~\ref{thm:oplus-equal-umd-type2-main}.
\end{enumerate}

As a consequence of Theorem~\ref{thm:oplus-equal-umd-type2-main}, one
also obtains an LPR inequality for the more general family of Vilenkin intervals
\[
I_s^{\oplus}
=
\zeta_s\oplus[L_1,L_2),
\]
where \(0\le L_1<L_2\) are independent of
\(s\) and $\zeta_s,L_1,L_2\in\mathbb N$.

\begin{corollary}\label{cor:oplus-common-interval}
Under the hypotheses of
Theorem~\ref{thm:oplus-equal-umd-type2-main}, the inequality
\eqref{LPRG} holds for every finite family of pairwise disjoint sets
\[
I_s^{\oplus}
=
\zeta_s\oplus[L_1,L_2),
\qquad
\zeta_s\in\mathbb N, \quad L_1,L_2\in\N, \quad 0\le L_1<L_2,
\]
with the constant \[C_{p,X,\mathbf m}\lesssim_p t_{2,X}^2\beta_{p,X}^6\] independent of $\mathbf m.$
\end{corollary}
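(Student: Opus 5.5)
Write \(A=[0,L_1)\) and \(B=[0,L_2)\). Since \([0,L_1)\subset[0,L_2)\) as subsets of \(\N\), the set \([L_1,L_2)\) is the disjoint difference \(B\setminus A\). Translation by \(\zeta_s\) under \(\oplus\) is a bijection of \(\N\), so
\[
I_s^\oplus=\zeta_s\oplus[L_1,L_2)=(\zeta_s\oplus B)\setminus(\zeta_s\oplus A),
\qquad
P_{I_s^\oplus}=P_{\zeta_s\oplus B}-P_{\zeta_s\oplus A}.
\]
By the triangle inequality in \(L^p(\Omega;L^p(\Gm;X))\), it therefore suffices to bound the randomized sums for the families \((\zeta_s\oplus B)_s\) and \((\zeta_s\oplus A)_s\) separately. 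The difficulty is that these families need not be pairwise disjoint, even though the sets \(I_s^\oplus\) are.

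To restore disjointness, I would use the contraction principle on \(S=\sum_s\varepsilon_sP_{\zeta_s\oplus B}f\). Observe that \(P_{I_s^\oplus}=P_{\zeta_s\oplus B}P_{I_s^\oplus}\). Hence
\[
\sum_s\varepsilon_sP_{I_s^\oplus}f=\sum_s\varepsilon_s P_{\zeta_s\oplus B}P_{I_s^\oplus}f.
\]
The cleanest route is to cover the translates \(\zeta_s\oplus B\) by a bounded number of subfamilies, each pairwise disjoint. The natural tool is the structure used in Lemma~\ref{lem:digital-rectangular-envelopes}.

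An alternative, better route avoids the overlap question entirely by using the modulation identity from Proposition~\ref{prop:oplus-reduction}. Multiplication by \(\psi_{\zeta}\) conjugates \(P_{D}\) into \(P_{\zeta\oplus D}\), so \(P_{\zeta_s\oplus[L_1,L_2)}=M_{\zeta_s}P_{[L_1,L_2)}M_{\zeta_s}^{-1}\). Write
\[
P_{[L_1,L_2)}=P_{[0,L_2)}-P_{[0,L_1)}=P_{[0,L_2)}(I-P_{[0,L_1)}).
\]
The key step is then the factorization
\[
P_{I_s^\oplus}=P_{\zeta_s\oplus[0,L_2)}\,P_{I_s^\oplus}.
\]
I would apply Theorem~\ref{thm:oplus-equal-umd-type2-main} to a disjoint family of equal-length Vilenkin intervals \(\zeta_s\oplus[0,L)\) that contain the \(I_s^\oplus\) and are still pairwise disjoint.

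The main obstacle is that the containing sets \(\zeta_s\oplus[0,L_2)\) may overlap even though the \(I_s^\oplus\) do not. I expect to handle this by choosing \(L\) to be the smallest multiple \(jM_k\) adapted to the digit where \(L_1\) and \(L_2-1\) first differ. At that scale, \([L_1,L_2)\) decomposes into at most two or three Vilenkin intervals of the form \(\eta\oplus[0,L')\), namely a coset piece and a leftover piece. Each resulting family, indexed by \(s\), then consists of equal-length translates. Disjointness across \(s\) of each subfamily is inherited from the \(I_s^\oplus\), since each piece is a subset of \(I_s^\oplus\).

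Summing the boundedly many families gives the bound with constant \(\lesssim_p t_{2,X}^2\beta_{p,X}^6\). If a piece has the form \(\eta\oplus[L',L'')\) rather than a translate of an initial segment, I would instead invoke Lemma~\ref{lem:rectangular-envelope-estimate} directly for it.
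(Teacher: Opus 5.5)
There is a genuine gap in your decomposition claim. Let $k$ be the largest digit at which $L_1$ and $L_2-1$ differ, write $L_1=h+aM_k+r$ with $M_{k+1}\mid h$, and set $w:=h+(a+1)M_k$. The upper piece is fine: $[w,L_2)=w\oplus[0,L_2-w)$, because $w$ has vanishing digits below $k$ and no carry occurs. The lower ``leftover'' piece $[L_1,w)$ is a final segment of the block $h+aM_k+[0,M_k)$. In general it is \emph{not} a union of boundedly many sets $\eta\oplus[0,L')$. What is true is $[L_1,w)=(w-1)\ominus[0,w-L_1)$, a $\ominus$-translate of an initial segment.

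For a concrete obstruction, take $m_j\equiv 3$, $L_2=3^k$ and $L_1=(3^k-1)/2$, so that all ternary digits of $L_1$ equal $1$. Then $[L_1,L_2)=\{L_1\}\cup\bigcup_{i<k}A_i$, where $A_i:=\{n<3^k:\ n_j=1 \text{ for } i<j<k,\ n_i=2\}$. A set $\eta\oplus[0,L')$ whose top digit of $L'$ sits at position $t$ has a fixed shape: at level $t$ it is a cyclic run of full $\Gamma_t$-cosets followed by one partial block. Checking which runs fit inside $[L_1,L_2)$ shows that every such set contained in $[L_1,L_2)$ either lies in a single $A_i$ or equals $\{L_1\}$ or $\{L_1,L_1+1\}$. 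Hence any such partition has at least $k$ pieces. Summing the resulting families then gives a constant that grows with the number of digits, not $\lesssim_p t_{2,X}^2\beta_{p,X}^6$. (In the Walsh case $\ominus=\oplus$ and your claim does hold, which is probably why it looks plausible.)

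Your fallback does not rescue this. Lemma~\ref{lem:rectangular-envelope-estimate} only treats envelopes of $\oplus$-translates of $[0,L)$. Moreover, a family $\zeta_s\oplus\eta\oplus[L',L'')$ is exactly the kind of family the corollary is about, so invoking it there is circular.

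The missing ingredient is the reflection that turns $\ominus$-translates into $\oplus$-translates. Let $\iota(n):=0\ominus n$ and $(Rf)(x):=f(-x)$, where $-x=((-x_j)\bmod m_j)_j$. Haar measure is invariant under $x\mapsto -x$, so $R$ is an isometric involution of $L^p(\Gm;X)$. The identity $\overline{\psi_n}=\psi_{0\ominus n}$ gives $RP_AR=P_{\iota(A)}$. Since $\iota(c\ominus n)=\iota(c)\oplus n$, the pairwise disjoint family $I_s^-:=(\zeta_s\oplus(w-1))\ominus[0,w-L_1)$ is carried to $\iota(\zeta_s\oplus(w-1))\oplus[0,w-L_1)$. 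This is a pairwise disjoint family of equal-length Vilenkin intervals, so Theorem~\ref{thm:oplus-equal-umd-type2-main} applies to it. Together with the family $I_s^+:=(\zeta_s\oplus w)\oplus[0,L_2-w)$, two applications of the theorem and the triangle inequality give the stated constant. This is the paper's argument, and it needs no envelope construction beyond what the theorem already contains.
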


The preceding results show that the two formulations of LPR inequality have sharply
different behavior. 
This distinction suggests that a transference principle intended to
relate the equal-length LPR theories on the torus and on Vilenkin
groups should preserve a correspondence between torus frequency
intervals and Vilenkin intervals, together with their respective
translation structures. 
% Corollary~\ref{cor:oplus-common-interval}
% reinforces this viewpoint by allowing \(\oplus\)-translates of any
% fixed canonical interval. 
These observations motivate the search for
a multiplier transference principle that respects this interval
correspondence.

% \subsection{Organization}

% The remainder of the paper is organized as follows. Section~\ref{s2} collects
% the necessary preliminaries on Vilenkin systems, vector-valued geometry,
% noncommutative \(L^p\)-spaces, and row--column sequence spaces. Section~\ref{s3}
% proves the necessity of the UMD and type~\(2\) conditions for both
% formulations of the equal-length LPR property.
%  Section~\ref{s4} constructs the
% operator-valued counterexample showing that the Schatten class \(S^p\),
% \(2<p<\infty\), fails the \(\mathrm{LPR}_{p,=}\) property.
%  In Section~\ref{s5}, we prove that any UMD Banach space with type $2$ satisfies the
% \(\mathrm{LPR}_{p,=}^{\oplus}\) property by combining a type~\(2\)
% randomized Fourier-coefficient estimate, a two-parameter estimate on
% finite cyclic groups, an embedding of Vilenkin intervals into digital rectangular envelopes, and the uniform
% boundedness of partial-sum operators given in \cite[Theorem 1.2]{CH2026};
% we then prove Corollary~\ref{cor:oplus-common-interval}. 

\subsection{Notation}
Throughout the paper, we set
\(
\mathbb N:=\{0,1,2,\dots\}.
\)
We identify the torus with the unit circle, via the parametrization
\(t\mapsto e^{2\pi i t}\) for \(t\in[0,1)\).
For \(m\ge2\), we write
\(
\mathbb Z_m:=\mathbb Z/m\mathbb Z
\)
for the cyclic group of order \(m\), endowed with normalized counting measure.

For \(a<b\), the notation \([a,b)\) denotes the corresponding
half-open interval. When the endpoints are integers, it is understood
as a subset of \(\mathbb Z,\mathbb N\) or \(\mathbb Z_m\), according
to the context. For \(1<p<\infty\), \(p'\) denotes the conjugate
exponent, namely
\(
1/p+1/p'=1.
\)
The symbol \(\mathbf 1_E\) denotes the indicator function of a set
\(E\), and \(\operatorname{supp} f\) denotes the support of \(f\).
We write \(\overline z\) for the complex conjugate of \(z\),
\(A^*\) for the adjoint of an operator \(A\), and
\(A^{\mathsf T}\) for its transpose.

For nonnegative quantities \(A\) and \(B\), the notation
\(
A\lesssim_{\Theta} B
\)
means that \(A\le C_{\Theta}B\) for a constant \(C_{\Theta}\)
depending only on the parameters listed in \(\Theta\).
We use \(\gtrsim_{\Theta}\) for the reverse inequality.
The dependence of implicit constants is indicated by subscripts
whenever necessary.

The notation \(L^p(\Omega;X)\) stands for the Bochner
\(L^p\)-space of \(X\)-valued functions. Expectations and
probabilities are denoted by \(\mathbb E\) and \(\mathbb P\),
respectively, with subscripts indicating the underlying random
variables. The sequence \((\varepsilon_j)_j\) denotes an i.i.d.\
Rademacher sequence on an auxiliary probability space $(\Omega_\varepsilon,\mathbb P)$.

We denote by \(\ell^2\) the Hilbert space of square-summable
sequences and by \(B(\ell^2)\) the algebra of bounded operators
on \(\ell^2\). For \(1\le p<\infty\), \(S^p=S^p(B(\ell^2))\)
denotes the Schatten \(p\)-class, equipped with the norm
\(
\|A\|_{S^p}
=
\bigl(\operatorname{Tr}(|A|^p)\bigr)^{1/p}.
\)
Furthermore, \(\ell^2_N=\mathbb C^N\),
\(\mathbb M_N=B(\ell^2_N)\), and \(e_{ij}\) denotes the standard
matrix unit. We write \(S^p_N\) for \(\mathbb M_N\) equipped with the
Schatten \(p\)-norm defined using the unnormalized matrix trace.
The symbol \(\overline\otimes\) denotes the spatial tensor product of
von Neumann algebras.

\section{Preliminaries}\label{s2}

We recall the Vilenkin character system, the canonical filtration,
and the associated Fourier projections. We then introduce the
Banach-space notions and noncommutative column and row sequence
spaces needed in the subsequent estimates.

\subsection{Basic definitions for Vilenkin systems}\label{s2.1}

Every \(n\in\N\) has a unique mixed-radix expansion
\[
 n=\sum_{k\ge0}n_kM_k,
 \qquad
 0\le n_k<m_k,
\]
with only finitely many nonzero digits, where \(M_k\) is defined
in \eqref{Mk}. We call \(n_k\) the \(k\)-th digit of \(n\).
For \(x=(x_k)_{k\ge0}\in\Gm\), define
\[
 r_k(x):=e^{2\pi ix_k/m_k},
 \qquad
 \psi_n(x):=\prod_{k\ge0}r_k(x)^{n_k}.
\]
The family \((\psi_n)_{n\ge0}\) is the associated Vilenkin
character system.

For an \(X\)-valued Vilenkin polynomial \(f\) and a set
\(A\subseteq\N\), define
\[
 \widehat f(n)
 :=
 \int_{\Gm}f(x)\overline{\psi_n(x)}\,d\mu(x),
 \qquad
 P_Af
 :=
 \sum_{n\in A}\widehat f(n)\psi_n.
\]
The character identities
\[
 \psi_a\psi_b=\psi_{a\oplus b},
 \qquad
 \overline{\psi_a}=\psi_{0\ominus a}
\]
yield the modulation identity
\begin{equation}\label{eq:modulation-identity}
 P_{c\oplus A}f
 =
 \psi_cP_A(\overline{\psi_c}f),
 \qquad
 c\in\N,\quad A\subseteq\N.
\end{equation}

The canonical filtration on \(\Gm\) is given by
\[
 \cF_k:=\sigma(x_0,\ldots,x_{k-1}),
 \qquad k\ge1,
\]
with \(\cF_0\) the trivial \(\sigma\)-algebra. Let \(E_k\)
denote conditional expectation onto \(\cF_k\). Then
\[
 E_k=P_{[0,M_k)}.
\]
For \(k\ge0\) and \(0\le\ell<m_k\), define the finer frequency
blocks and their associated projections by
\[
 \delta_{k,\ell}:=[\ell M_k,(\ell+1)M_k),
 \qquad
 \Delta_{k,\ell}:=P_{\delta_{k,\ell}}.
\]
Since
\(\delta_{k,\ell}=\ell M_k\oplus[0,M_k)\),
the modulation identity \eqref{eq:modulation-identity} gives
\begin{equation}\label{finer}
 \Delta_{k,\ell}f
 =
 \psi_{\ell M_k}E_k(\overline{\psi_{\ell M_k}}f).
\end{equation}
Consequently, the martingale-difference operators associated
with \((\cF_k)_{k\ge0}\) satisfy
\[
 d_k:=E_{k+1}-E_k
 =
 \sum_{\ell=1}^{m_k-1}\Delta_{k,\ell},
 \qquad k\ge0.
\]

\subsection{Basic definitions for Banach spaces}

We refer to \cite{Pisier2016,HNVWII} for background on type
and the UMD property. Throughout, Banach spaces are complex
unless explicitly stated otherwise. All Gaussian variables
below are normalized complex Gaussian variables, and
\(\varepsilon=(\varepsilon_j)_j\) denotes an independent
Rademacher sequence.

\begin{definition}[Type~\(2\)]
A Banach space \(X\) has \emph{type~\(2\)} if there exists
a constant \(C<\infty\) such that, for every finite
family \(x_1,\ldots,x_n\in X\),
\[
 \left(
  \mathbb E_\varepsilon
  \norm{\sum_{j=1}^n\varepsilon_jx_j}_X^2
 \right)^{1/2}
 \le
 C
\left(\sum_{j=1}^n\norm{x_j}_X^2\right)^{1/2}.
\]
We denote the least admissible constant by \(t_{2,X}\).
\end{definition}

\begin{remark}\label{rgeq}
The type~\(2\) property can equivalently be defined using
Gaussian random variables; see
\cite[Theorem~12.26]{DiestelJarchowTonge1995}.
More precisely, \(X\) has type~\(2\) if and only if there
exists a constant \(C'<\infty\) such that, for every finite
family \(x_1,\ldots,x_n\in X\),
\[
 \left(
  \mathbb E_\gamma
  \norm{\sum_{j=1}^n\gamma_jx_j}_X^2
 \right)^{1/2}
 \le
 C'\left(\sum_{j=1}^n\norm{x_j}_X^2\right)^{1/2},
\]
where \((\gamma_j)_j\) is an independent sequence of normalized
complex Gaussian variables, normalized by \(\mathbb E|\gamma_j|^2=1\).
Taking \(G_{2,X}\) to be the least admissible constant, we have
$$\frac 1 {\sqrt \pi} \,t_{2,X}\le G_{2,X}\le t_{2,X}.$$
\end{remark}

\begin{definition}[UMD spaces]
 A Banach space \(X\) has the
\emph{UMD property} if for some $1<r<\infty$ (equivalently for every
$1<r<\infty$), there exists \(C_r<\infty\) such that
\[
 \norm{\sum_{k=1}^n\theta_kd_k}_{L^r(\Omega;X)}
 \le
 C_r
 \norm{\sum_{k=1}^nd_k}_{L^r(\Omega;X)}
\]
for every finite \(X\)-valued martingale difference sequence
\((d_k)_{k=1}^n\) in \(L^r(\Omega;X)\) on any filtered
probability space, and every choice of signs
\(\theta_k\in\{-1,1\}\). For each fixed \(r\), we denote the least
admissible constant by \(\beta_{r,X}\).
\end{definition}
% \begin{definition}[Rademacher space]
% For a Banach space \(X\), the \emph{Rademacher space}
% \(\operatorname{Rad}(X)\) is the closure in
% \(L^2(\Omega_\varepsilon;X)\) of all finite sums
% \[
%  \sum_{j=1}^n\varepsilon_jx_j,
%  \qquad n\ge1,\quad x_1,\ldots,x_n\in X.
% \]
% It is equipped with the inherited norm, so that
% \[
%  \norm{\sum_{j=1}^n\varepsilon_jx_j}_{\operatorname{Rad}(X)}
%  :=
%  \left(
%   \mathbb E_\varepsilon
%   \norm{\sum_{j=1}^n\varepsilon_jx_j}_X^2
%  \right)^{1/2}.
% \]
% \end{definition}

\subsection{Noncommutative \texorpdfstring{\(L^p\)}{Lp}-spaces
and column and row sequence spaces}

Let \((\cM,\tau)\) be a semifinite von Neumann algebra
equipped with a normal faithful semifinite trace.
For \(1\le p<\infty\), let \(L^p(\cM)\) denote the associated
noncommutative \(L^p\)-space. For a finite sequence
\((x_s)_s\subset L^p(\cM)\), define the column and row norms by
\begin{align*}
 \norm{(x_s)_s}_{L^p(\cM;\ell^2_c)}
 &:=
 \norm{\left(\sum_sx_s^*x_s\right)^{1/2}}_{L^p(\cM)},\\
 \norm{(x_s)_s}_{L^p(\cM;\ell^2_r)}
 &:=
 \norm{\left(\sum_sx_sx_s^*\right)^{1/2}}_{L^p(\cM)}.
\end{align*}
The spaces \(L^p(\cM;\ell^2_c)\) and \(L^p(\cM;\ell^2_r)\)
are the completions of the finitely supported sequences
under these norms.

For \(1<p<\infty\), let \(p'\) be the conjugate exponent,
so that \(1/p+1/p'=1\). With respect to the trace pairing
\(\sum_s\tau(x_s^*y_s)\), these spaces satisfy the isometric
dualities
\begin{equation}\label{eq:column-row-duality}
 \bigl(L^p(\cM;\ell^2_c)\bigr)^*
 =
 L^{p'}(\cM;\ell^2_c),
 \qquad
 \bigl(L^p(\cM;\ell^2_r)\bigr)^*
 =
 L^{p'}(\cM;\ell^2_r).
\end{equation}
We refer to Pisier--Xu~\cite{PisierXu1997} for further
background on noncommutative \(L^p\)-spaces and the
column and row sequence spaces.

\section{Necessity of the UMD property and the type
\texorpdfstring{\(2\)}{2} condition}\label{s3}

In this section, we prove Theorem~\ref{thm:walsh-necessity}.  Our
argument follows the approach of Hyt\"onen, Torrea, and Yakubovich
\cite[Section~2]{HTY}.
Throughout this section, the generating sequence \(\mathbf m\) is fixed,
and constants denoted by \(C_{p,X,\mathbf m}\) may increase from line to line.
 
 \subsection{The \texorpdfstring{\(\operatorname{LPR}_{p,=}\)}
{LPRp equal} case}
 We first treat the
$\operatorname{LPR}_{p,=}$ property. For the UMD property,
applying the $\operatorname{LPR}_{p,=}$ inequality to a family consisting of
a single interval $I=[0,n)$ shows that the Vilenkin partial-sum
operators are uniformly bounded on $L^p(\Gm;X)$. It follows from
\cite[Corollary 1.3]{CH2026} that $X$ is UMD.

For the type~$2$ condition, we first prove the following elementary consequence of the equal-length LPR estimate, which is the main ingredient in the proof of the type~$2$ condition.

\begin{lemma}\label{lem:two-parameter-walsh}
Assume that $X$ has the $\operatorname{LPR}_{p,=}$ property.  Let
$f_1,\ldots,f_J$ be $X$-valued Vilenkin polynomials satisfying
\[
 \operatorname{supp}\widehat f_j\subset[0,M_r),
 \qquad 1\le j\le J
\]
for some $r\in\mathbb N.$
Then
\begin{equation}\label{eq:two-parameter-walsh}
 \left(
  \mathbb E_\varepsilon
  \left\|
   \sum_{j=1}^J
   \sum_{k=0}^{M_r-1}
   \varepsilon_{j,k}\widehat f_j(k)
  \right\|_X^p
 \right)^{1/p}
 \le
 C_{p,X,\mathbf m}
 \left\|
  \max_{|\delta_j|=1}
  \left\|
   \sum_{j=1}^J\delta_jf_j(\,\cdot\,)
  \right\|_X
 \right\|_{L^p(\Gm)},
\end{equation}
where $(\varepsilon_{j,k})_{j,k}$ is an independent Rademacher
family.
\end{lemma}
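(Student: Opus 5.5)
Following \cite[Section~2]{HTY}, I would build one auxiliary $X$-valued Vilenkin polynomial whose spectrum places the $j$-th function in the $j$-th block of a family of disjoint, equal-length canonical intervals. Concretely, choose $s$ with $M_s\ge J$ (we may take $s>r$), and set
\[
 F(x):=\sum_{j=1}^J \psi_{jM_s}(x)\,f_j(x).
\]
Since $\operatorname{supp}\widehat f_j\subset[0,M_r)$ with $r\le s$, the digits of $jM_s$ do not interact with those of $k<M_r$, so $\psi_{jM_s}\psi_k=\psi_{jM_s+k}$. Hence $\widehat F(jM_s+k)=\widehat f_j(k)$, and the spectrum of $F$ lies in the disjoint union of the blocks $jM_s+[0,M_r)$. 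For the parameter $s$ I would rather use digit positions above $r$ with mixed radix, $n=jM_s$ for $j<m_s\cdots$. It is simpler to encode $j$ in the digits above level $r$, that is, to take $n_j:=j M_r$ for $1\le j\le J$. The mixed-radix digits of $jM_r$ all live at levels $\ge r$, so again $\psi_{jM_r}\psi_k=\psi_{jM_r+k}$ for $k<M_r$.

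The first step is to apply the $\operatorname{LPR}_{p,=}$ inequality to $F$ with the $JM_r$ singleton intervals $\{jM_r+k\}$. These are disjoint canonical intervals of equal cardinality $1$, and they give
\[
 \Bigl(\mathbb E_\varepsilon\Bigl\|\sum_{j,k}\varepsilon_{j,k}\widehat f_j(k)\psi_{jM_r+k}\Bigr\|_{L^p(\Gm;X)}^p\Bigr)^{1/p}\le C\|F\|_{L^p(\Gm;X)}.
\]
The left-hand side equals the left-hand side of \eqref{eq:two-parameter-walsh}. To see this, note that for each fixed $x$ the family $(\varepsilon_{j,k}\psi_{jM_r+k}(x))$ has the same joint distribution as $(\varepsilon_{j,k}\theta_{j,k})$ with $|\theta_{j,k}|=1$. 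For real Rademachers and complex unimodular $\theta$, this equality holds only up to a constant, via the contraction principle (splitting into real and imaginary parts, cost a factor $2$). Both directions hold, so the left side is comparable to the $x$-independent quantity, and integrating over $x$ gives the bound.

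The second step is to control $\|F\|_{L^p(\Gm;X)}$ by the maximum over signs; this is the main obstacle. The idea is to randomize the modulations. The characters $\psi_{jM_r}$ depend only on the coordinates $x_k$ with $k\ge r$, while the $f_j$ depend only on $x_0,\dots,x_{r-1}$. Pointwise in the low coordinates $y$, we have
\[
\int\Bigl\|\sum_j\psi_{jM_r}(z)f_j(y)\Bigr\|^p\,dz=\mathbb E\Bigl\|\sum_j\theta_j(z)f_j(y)\Bigr\|^p,
\]
where the $\theta_j(z)$ are unimodular. For any unimodular coefficients, $\|\sum_j\theta_jx_j\|\le 4\max_{|\delta_j|=1}\|\sum_j\delta_jx_j\|$: split $\theta_j$ into real and imaginary parts in $[-1,1]$, and use that a real linear combination with coefficients in $[-1,1]$ is a convex combination of sign choices. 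Integrating in $y$ then gives $\|F\|_{L^p}\le 4\|\max_\delta\|\sum_j\delta_jf_j\|_X\|_{L^p(\Gm)}$. Combining the two steps yields \eqref{eq:two-parameter-walsh}, with absolute factors absorbed into $C_{p,X,\mathbf m}$.

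The delicate point to check is the digit-separation claim $\psi_{jM_r}\psi_k=\psi_{jM_r+k}$, that is, $jM_r\oplus k=jM_r+k$. This holds because adding a multiple of $M_r$ to some $k<M_r$ produces no carries into the lower digits.
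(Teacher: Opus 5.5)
Your proposal is correct and follows the paper's proof. You use the same function $F=\sum_j\psi_{jM_r}f_j$, apply the $\operatorname{LPR}_{p,=}$ inequality to the singletons $\{jM_r+k\}$, and strip the unimodular factors $\psi_{jM_r+k}(x)$ with the complex contraction principle. Your second step is heavier than needed: since the $\delta_j$ range over complex unimodular scalars, the pointwise bound $\|F(x)\|_X\le\max_{|\delta_j|=1}\|\sum_j\delta_jf_j(x)\|_X$ is immediate by taking $\delta_j=\psi_{jM_r}(x)$, which is all the paper does. Your fibre decomposition and convex-combination argument is unnecessary, though it is harmless and remains valid if one maximizes only over real signs.
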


\begin{proof}
For $1\le j\le J$, set
\[
 a_j:=jM_r,
 \qquad
 F:=\sum_{j=1}^J\psi_{a_j}f_j.
\]
Since the first $r$ digits of $a_j$ vanish, we have
\[
 a_j\oplus k=a_j+k,
 \qquad 0\le k<M_r.
\]
Consequently, the singleton intervals
\[
 \bigl\{\{a_j+k\}:1\le j\le J,\ 0\le k<M_r\bigr\}
 =
 \bigl\{\{a_j\oplus k\}:1\le j\le J,\ 0\le k<M_r\bigr\}
\]
are pairwise disjoint.  Moreover,
\[
 P_{\{a_j+k\}}F
 =
 \widehat f_j(k)\psi_{a_j+k}.
\]
Applying the $\operatorname{LPR}_{p,=}$ property to this family gives
\[
 \left(
  \mathbb E_\varepsilon
  \left\|
   \sum_{j=1}^J
   \sum_{k=0}^{M_r-1}
   \varepsilon_{j,k}
   \widehat f_j(k)\psi_{a_j+k}
  \right\|_{L^p(\Gm;X)}^p
 \right)^{1/p}
 \le
  C_{p,X,\mathbf m}\norm{F}_{L^p(\Gm;X)}.
\]
For each fixed $x\in\Gm$, all the scalars
$\psi_{a_j+k}(x)$ are unimodular.  Hence the complex contraction
principle shows that the expression on the left-hand side is equivalent,
up to a universal constant, to
\[
 \left(
  \mathbb E_\varepsilon
  \left\|
   \sum_{j=1}^J
   \sum_{k=0}^{M_r-1}
   \varepsilon_{j,k}\widehat f_j(k)
  \right\|_X^p
 \right)^{1/p}.
\]
On the other hand,
\[
 F(x)=\sum_{j=1}^J\psi_{a_j}(x)f_j(x),
\]
and therefore
\[
 \norm{F(x)}_X
 \le
 \max_{|\delta_j|=1}
 \left\|
  \sum_{j=1}^J\delta_jf_j(x)
 \right\|_X.
\]
Combining these estimates proves
\eqref{eq:two-parameter-walsh}.
\end{proof}

Now we can show that $X$ has type~$2.$
Fix $r\ge1$ and set
\(
 N:=M_r.
\)
Let
\[
 F_r=\{A_1,\ldots,A_N\}
\]
be the family of atoms of $\mathcal F_r$.  Thus
\[
 \mu(A_j)=\frac1N,
 \qquad 1\le j\le N.
\]
For each $1\le j\le N$, set
\(
 \phi_j:=\1_{A_j}.
\)
Since $\phi_j$ is $\mathcal F_r$-measurable, it is a Vilenkin
polynomial whose Fourier-Vilenkin support is contained in $[0,N)$.
Moreover, every $\psi_k$, $0\le k<N$, is constant on $A_j$.
Consequently,
\begin{equation}\label{eq:atom-walsh-coefficients}
 \widehat\phi_j(k)
 =
 \int_{A_j}\overline{\psi_k(x)}\,d\mu(x)
 =
 \frac{\sigma_{j,k}}{N},
 \qquad
 |\sigma_{j,k}|=1,
 \qquad
 0\le k<N.
\end{equation}

Let $x_1,\ldots,x_N\in X$ satisfy
\[
 \norm{x_j}_X\le1,
 \qquad 1\le j\le N.
\]
Applying Lemma~\ref{lem:two-parameter-walsh} to
\(
 f_j:=\phi_jx_j
\)
and using \eqref{eq:atom-walsh-coefficients}, we may absorb the
unimodular factors $\sigma_{j,k}$ by the complex contraction principle.
It follows that
\begin{align}
 &\left(
  \mathbb E_\varepsilon
  \left\|
   \frac1N
   \sum_{j=1}^N
   \sum_{k=0}^{N-1}
   \varepsilon_{j,k}x_j
  \right\|_X^p
 \right)^{1/p}
 \notag\\
 &\qquad\le
  C_{p,X,\mathbf m}
 \left\|
  \max_{|\delta_j|=1}
  \left\|
   \sum_{j=1}^N
   \delta_j\phi_j(\,\cdot\,)x_j
  \right\|_X
 \right\|_{L^p(\Gm)}
 \le
 C_{p,X,\mathbf m}.
 \label{eq:double-rademacher-bound}
\end{align}
Here the last inequality follows from the pairwise disjointness of the
sets $A_1,\ldots,A_N$ and the assumption
$\norm{x_j}_X\le1$.
For $1\le j\le N$, define
\[
 \xi_j
 :=
 \frac1N\sum_{k=0}^{N-1}\varepsilon_{j,k}.
\]
The random variables $\xi_1,\ldots,\xi_N$ are independent and
symmetric.  We
may therefore realize
\(
 \xi_j=\eta_j\rho_j
\)
in distribution,
where $(\eta_j)_{j=1}^N$ is an independent Rademacher family,
$\rho_j$ has the same distribution as $|\xi_j|$, and the two families
$(\eta_j)_j$ and $(\rho_j)_j$ are independent (after a harmless enlargement of the underlying probability space if necessary; see \cite[Example~4.4.13]{HNVWI}).  The scalar Khintchine
inequality therefore gives
\[
 \mathbb E_\rho\rho_j
 =
 \mathbb E_\varepsilon|\xi_j|
 =
 \frac1N
 \mathbb E_\varepsilon
 \left|
  \sum_{k=0}^{N-1}\varepsilon_k
 \right|
 \ge
 \frac{c}{\sqrt N},
\]
where $c>0$ is an absolute constant.

For each fixed realization of $(\eta_j)_{j=1}^N$, the function
\[
 (t_1,\ldots,t_N)
 \longmapsto
 \left\|
  \sum_{j=1}^N\eta_jt_jx_j
 \right\|_X^p
\]
is convex on $[0,\infty)^N$.  Jensen's inequality, followed by
 taking $p$-th roots, therefore yields
\begin{align}
 \left(
  \mathbb E_\varepsilon
  \left\|
   \sum_{j=1}^N\xi_jx_j
  \right\|_X^p
 \right)^{1/p}
 &=
 \left(
  \mathbb E_\eta\mathbb E_\rho
  \left\|
   \sum_{j=1}^N\eta_j\rho_jx_j
  \right\|_X^p
 \right)^{1/p}
 \notag\\
 &\ge
 \frac{c}{\sqrt N}
 \left(
  \mathbb E_\eta
  \left\|
   \sum_{j=1}^N\eta_jx_j
  \right\|_X^p
 \right)^{1/p}.
 \label{eq:double-to-single-rad}
\end{align}
Combining \eqref{eq:double-rademacher-bound} and
\eqref{eq:double-to-single-rad}, we obtain
\begin{equation}\label{eq:equal-norm-type2-p}
 \left(
  \mathbb E_\eta
  \left\|
   \sum_{j=1}^N\eta_jx_j
  \right\|_X^p
 \right)^{1/p}
 \le
 C'_{p,X,\mathbf m}\sqrt N.
\end{equation}
Since $p\ge2$, this also implies
\[
 \left(
  \mathbb E_\eta
  \left\|
   \sum_{j=1}^N\eta_jx_j
  \right\|_X^2
 \right)^{1/2}
 \le
 C'_{p,X,\mathbf m}\sqrt N.
\]

We now pass from lengths of the form $M_r$ to an arbitrary length.
Let $n\ge1$ and let $x_1,\ldots,x_n\in X$ satisfy
$\norm{x_j}_X\le1$.  Choose $r$ so large that
\(
 N:=M_r>n
\)
and set
\(
 q:=\left\lfloor N/n\right\rfloor.
\)
Then
\(
 n\le N/q\le2n.
\)
In \eqref{eq:equal-norm-type2-p}, repeat each vector $x_j$ exactly
$q$ times and fill the remaining $N-nq$ positions with $x_j=0$.  We obtain
\[
 \left(
  \mathbb E_\eta
  \left\|
   \sum_{j=1}^n
   \sum_{\ell=1}^q
   \eta_{j,\ell}x_j
  \right\|_X^p
 \right)^{1/p}
 \lesssim_{p,X,\mathbf m}
 \sqrt N,
\]
where $(\eta_{j,\ell})_{j,\ell}$ is an i.i.d.\ Rademacher family.

For $1\le j\le n$, put
\[
 \zeta_j:=\sum_{\ell=1}^q\eta_{j,\ell}.
\]
The random variables $\zeta_1,\ldots,\zeta_n$ are independent and
symmetric, and the scalar Khintchine inequality gives
\[
 \mathbb E_\eta|\zeta_j|
 =
 \mathbb E_\eta
 \left|
  \sum_{\ell=1}^q\eta_{j,\ell}
 \right|
 \gtrsim
 \sqrt q.
\]
We may again realize
\(
 \zeta_j=\eta_j\rho_j,
\)
where $(\eta_j)_{j=1}^n$ is an independent Rademacher family,
$\rho_j$ has the same distribution as $|\zeta_j|$, and
$(\eta_j)_j$ and $(\rho_j)_j$ are independent.  The same Jensen
argument as in \eqref{eq:double-to-single-rad}, followed by taking
$p$-th roots, yields
\[
 \left(
  \mathbb E_\zeta
  \left\|
   \sum_{j=1}^n\zeta_jx_j
  \right\|_X^p
 \right)^{1/p}
 \gtrsim
 \sqrt q
 \left(
  \mathbb E_\eta
  \left\|
   \sum_{j=1}^n\eta_jx_j
  \right\|_X^p
 \right)^{1/p}.
\]
Consequently,
\[
 \left(
  \mathbb E_\eta
  \left\|
   \sum_{j=1}^n\eta_jx_j
  \right\|_X^p
 \right)^{1/p}
 \lesssim_{p,X,\mathbf m}
 \sqrt{\frac{N}{q}}
 \lesssim_{p,X,\mathbf m}
 \sqrt n.
\]
This is the equal-norm type~$2$ estimate on the unit ball of $X$.
By the equal-norm theorem of James \cite{James1978}, it is
equivalent to type~$2$ in the usual sense.  This proves the necessity
of the UMD property and the type~$2$ condition under the
$\operatorname{LPR}_{p,=}$ assumption.

\subsection{The \texorpdfstring{\(\operatorname{LPR}_{p,=}^{\oplus}\)}
{LPRp equal plus} case}

It remains to consider the $\operatorname{LPR}_{p,=}^{\oplus}$ property.  The
argument is essentially identical.  Indeed, $[0,n)$ is both a canonical
interval and a Vilenkin interval. Hence, applying the $\operatorname{LPR}_{p,=}^{\oplus}$ property to a family consisting of a single interval yields the uniform boundedness of the Vilenkin partial-sum operators. It then follows from \cite[Corollary~1.3]{CH2026} that $X$ is UMD.

For the type~$2$ condition, apply the
$\operatorname{LPR}_{p,=}^{\oplus}$ property to the pairwise disjoint
equal-length Vilenkin intervals
\[
 \{a_j\oplus k\},
 \qquad
 1\le j\le J,
 \quad
 0\le k<M_r.
\]
This gives Lemma~\ref{lem:two-parameter-walsh} with the same proof.
The remainder of the type~$2$ argument applies without change.  This
completes the proof of Theorem~\ref{thm:walsh-necessity}.

\section{Proof of Theorem~\ref{thm:counterexample-main}}\label{s4}

In this section, we prove Theorem~\ref{thm:counterexample-main}
by constructing a counterexample on Schatten classes.
Specifically, we encode the \(\{0,1\}\)-patterns derived from
Sylvester--Hadamard matrices into the mixed-radix digits of
the endpoints of pairwise disjoint canonical intervals of equal
length. We then construct matrix-valued Vilenkin polynomials
whose interval projections realize these patterns up to
unitary factors. The singular-value structure of the Hadamard
matrices allows us to show that the ratios of the output
Schatten norms to the corresponding input column or row norms
diverge as the matrix dimension tends to infinity.

\begin{proof}[Proof of Theorem~\ref{thm:counterexample-main}]
It suffices to show that, for each $\dagger\in\{c,r\}$, the operator
\[
 (P_{I_s})_s
 :
 L^p(\cN_{\mathbf m})
 \longrightarrow
 L^p(\cN_{\mathbf m};\ell^2_\dagger),
 \qquad
 f\longmapsto (P_{I_s}f)_s,
\]
fails to be uniformly bounded over finite families of pairwise disjoint
equal-length intervals $(I_s)_s$ on $\mathbb N$.
We first consider the column case.  By
\eqref{eq:column-row-duality},
\[
 \bigl(L^p(\cN_{\mathbf m};\ell^2_c)\bigr)^*
 =
 L^{p'}(\cN_{\mathbf m};\ell^2_c).
\]
Since each $P_{I_s}$ is self-adjoint with respect to the
canonical dual pairing, the column estimate is equivalent to
\begin{equation}\label{eq:dual-column-interval-estimate}
 \norm{
  \sum_sP_{I_s}u_s
 }_{L^{p'}(\cN_{\mathbf m})}
 \le
 C_{p,\mathbf m}
 \norm{(u_s)_s}_{L^{p'}(\cN_{\mathbf m};\ell^2_c)}
\end{equation}
for every finite sequence $(u_s)_s\subset L^{p'}(\cN_{\mathbf m})$, with a
constant independent of the interval family.

\medskip
\noindent
\textit{Step 1: Construction of the equal-length intervals.}
Fix
\(
 N:=2^d, d\ge1.
\)
Let $H_N$ be the Sylvester--Hadamard matrix of order $N$, so that
\[
 H_NH_N^*=NI_N,
 \qquad
 (H_N)_{k,s}\in\{-1,1\}.
\]
Let $J_N$ be the $N\times N$ all-ones matrix and define
\[
 E
 :=
 \frac{J_N+H_N}{2}
 =
 (\varepsilon_{k,s})_{
  \substack{0\le k<N\\1\le s\le N}},
 \qquad
 \varepsilon_{k,s}\in\{0,1\}.
\]
For $1\le s\le N$, set
\[
 B_s
 :=
 \sum_{k=0}^{N-1}\varepsilon_{k,s}M_k.
\]
Since $\varepsilon_{k,s}\in\{0,1\}$ and $m_k\ge2$, we have
\(
 0\le B_s<M_N.
\)
Now define
\[
 b_s:=sM_{N+1}+B_s,
 \qquad
 I_s:=[b_s-M_N,b_s).
\]
Clearly,
\[
 |I_s|=M_N,
 \qquad 1\le s\le N.
\]
Moreover,
\[
 (b_{s+1}-M_N)-b_s
 =
 (m_N-1)M_N+B_{s+1}-B_s
 \ge1,
\]
because $0\le B_s\le M_N-1$.  Thus the intervals
$I_1,\ldots,I_N$ are pairwise disjoint.

\medskip
\noindent
\textit{Step 2: Construction of the column test sequence.}
Let
\[
 q_N:=\sum_{j=1}^Ne_{jj}\in B(\ell^2).
\]
We identify the finite-dimensional corner
$q_NB(\ell^2)q_N$ with $\mathbb M_N$.  All the functions constructed
below take values in this corner.
Define
\[
 U(x)
 :=
 \sum_{k=0}^{N-1}
 \overline{r_k(x)}e_{k+1,k+1}.
\]
For each $1\le s\le N$, set
\[
 h_s(x)
 :=
 \sum_{k=0}^{N-1}
 \overline{r_k(x)}e_{s,k+1}
 =
 e_{ss}J_NU(x),
 \qquad
 u_s(x):=\psi_{b_s}(x)h_s(x).
\]

We next compute $P_{I_s}u_s$ explicitly.  Since
\(
 b_s=sM_{N+1}+B_s,
\)
the first $N$ digits of $b_s$ are precisely
\(
 \varepsilon_{0,s},\ldots,\varepsilon_{N-1,s}.
\)
Hence, for every $0\le k<N$,
\[
 b_s\ominus M_k
 =
 \begin{cases}
  b_s-M_k,
    & \varepsilon_{k,s}=1,\\[1mm]
  b_s+(m_k-1)M_k,
    & \varepsilon_{k,s}=0.
 \end{cases}
\]
If $\varepsilon_{k,s}=1$, then
\[
 b_s\ominus M_k
 =
 b_s-M_k
 \in[b_s-M_N,b_s)
 =
 I_s.
\]
If $\varepsilon_{k,s}=0$, then
\(
 b_s\ominus M_k>b_s,
\)
and therefore $b_s\ominus M_k\notin I_s$.  It follows that
\begin{equation}\label{eq:exact-column-projection}
 P_{I_s}u_s
 =
 \psi_{b_s}
 \sum_{k=0}^{N-1}
 \varepsilon_{k,s}\overline{r_k}\,e_{s,k+1}.
\end{equation}

\medskip
\noindent
\textit{Step 3: Realization of the Hadamard mask.}
Define
\[
 R(x)
 :=
 \sum_{s=1}^N\psi_{b_s}(x)e_{ss}.
\]
Both $R(x)$ and $U(x)$ are unitary elements of $\mathbb M_N$.  By
\eqref{eq:exact-column-projection},
\begin{align*}
 \sum_{s=1}^NP_{I_s}u_s(x)
 &=
 \sum_{s=1}^N
 \sum_{k=0}^{N-1}
 \varepsilon_{k,s}
 \psi_{b_s}(x)
 \overline{r_k(x)}e_{s,k+1}
 \\
 &=
 R(x)E^{\mathsf T}U(x).
\end{align*}
The unitary invariance of the Schatten norm therefore gives
\begin{equation}\label{eq:column-output-Spprime}
 \norm{
  \sum_{s=1}^NP_{I_s}u_s
 }_{L^{p'}(\cN_{\mathbf m})}
 =
 \norm{E^{\mathsf T}}_{S^{p'}_N}
 =
 \norm{E}_{S^{p'}_N}.
\end{equation}

Since
\[
 E=\frac{J_N+H_N}{2},
\]
the triangle inequality yields
\begin{align}
 \norm{E}_{S^{p'}_N}
 &\ge
 \frac12
 \left(
  \norm{H_N}_{S^{p'}_N}
  -
  \norm{J_N}_{S^{p'}_N}
 \right)
 \notag\\
 &=
 \frac12
 \left(
  N^{1/p'+1/2}-N
 \right).
 \label{eq:column-Spprime-lower-hadamard}
\end{align}
Indeed, all $N$ singular values of $H_N$ are equal to $\sqrt N$,
whereas $J_N$ has rank one and its unique nonzero singular value is
equal to $N$.

\medskip
\noindent
\textit{Step 4: Computation of the column input norm.}
Since $\psi_{b_s}$ is scalar-valued and unimodular, we have
\(
 u_s^*u_s=h_s^*h_s.
\)
Using $J_N^2=NJ_N$, we obtain pointwise
\begin{align*}
 \sum_{s=1}^Nu_s^*u_s
 &=
 U^*
 \left(
  \sum_{s=1}^NJ_Ne_{ss}J_N
 \right)
 U
 \\
 &=
 U^*J_N^2U
 =
 NU^*J_NU.
\end{align*}
Since $J_N\ge0$ and $J_N^2=NJ_N$, it follows that
\[
 \left(
  \sum_{s=1}^Nu_s^*u_s
 \right)^{1/2}
 =
 U^*J_NU.
\]
Consequently,
\begin{equation}\label{eq:column-input-norm}
 \norm{(u_s)_{s=1}^N}_{L^{p'}(\cN_{\mathbf m};\ell^2_c)}
 =
 \norm{J_N}_{S^{p'}_N}
 =
 N.
\end{equation}

\medskip
\noindent
\textit{Step 5: Failure of the column estimate.}
Combining \eqref{eq:column-output-Spprime},
\eqref{eq:column-Spprime-lower-hadamard}, and
\eqref{eq:column-input-norm}, we obtain
\begin{equation}\label{eq:dual-column-ratio-counterexample}
 \frac{
  \norm{
   \sum_{s=1}^NP_{I_s}u_s
  }_{L^{p'}(\cN_{\mathbf m})}
 }{
  \norm{(u_s)_{s=1}^N}_{L^{p'}(\cN_{\mathbf m};\ell^2_c)}
 }
 \ge
 \frac12
 \left(
  N^{1/p'-1/2}-1
 \right).
\end{equation}
Since
\[
 \frac1{p'}-\frac12
 =
 \frac12-\frac1p
 >
 0,
\]
the right-hand side of
\eqref{eq:dual-column-ratio-counterexample} tends to infinity as
$N\to\infty$.  Thus the dual column estimate
\eqref{eq:dual-column-interval-estimate} cannot hold uniformly.
By duality, the column square-function estimate in
\eqref{eq:false-operator-rubio} fails.

\medskip
\noindent
\textit{Step 6: Failure of the row estimate.}
For the row case, define
\[
 h_s(x)
 :=
 \sum_{k=0}^{N-1}
 \overline{r_k(x)}e_{k+1,s}
 =
 U(x)J_Ne_{ss},
 \qquad
 u_s(x):=\psi_{b_s}(x)h_s(x).
\]
The same frequency computation as in Step~2 gives
\[
 P_{I_s}u_s
 =
 \psi_{b_s}
 \sum_{k=0}^{N-1}
 \varepsilon_{k,s}\overline{r_k}\,e_{k+1,s}.
\]
Consequently,
\[
 \sum_{s=1}^NP_{I_s}u_s
 =
 UER,
\]
and hence
\[
 \norm{
  \sum_{s=1}^NP_{I_s}u_s
 }_{L^{p'}(\cN_{\mathbf m})}
 =
 \norm{E}_{S^{p'}_N}.
\]

On the other hand,
\begin{align*}
 \sum_{s=1}^Nu_su_s^*
 &=
 U
 \left(
  \sum_{s=1}^NJ_Ne_{ss}J_N
 \right)
 U^*
 \\
 &=
 UJ_N^2U^*
 =
 NUJ_NU^*.
\end{align*}
Therefore,
\[
 \left(
  \sum_{s=1}^Nu_su_s^*
 \right)^{1/2}
 =
 UJ_NU^*,
\]
and thus
\[
 \norm{(u_s)_{s=1}^N}_{L^{p'}(\cN_{\mathbf m};\ell^2_r)}
 =
 \norm{J_N}_{S^{p'}_N}
 =
 N.
\]
It follows from \eqref{eq:column-Spprime-lower-hadamard} that
\[
 \frac{
  \norm{
   \sum_{s=1}^NP_{I_s}u_s
  }_{L^{p'}(\cN_{\mathbf m})}
 }{
  \norm{(u_s)_{s=1}^N}_{L^{p'}(\cN_{\mathbf m};\ell^2_r)}
 }
 \ge
 \frac12
 \left(
  N^{1/p'-1/2}-1
 \right).
\]
This quantity tends to infinity as $N\to\infty$.  Hence the row
estimate in \eqref{eq:false-operator-rubio-row} also fails.  This completes
the proof.
\end{proof}

\section{Proof of Theorem~\ref{thm:oplus-equal-umd-type2-main}}\label{s5}
This section proves
Theorem~\ref{thm:oplus-equal-umd-type2-main}.  
The proof proceeds in several steps. We first derive a randomized
Fourier-coefficient estimate from the type~\(2\) property and use it to
deduce the LPR inequality for equal-scale cosets in Vilenkin systems. We then establish, via a transference argument, a
uniform two-parameter randomization estimate for equal-length Fourier
projections on finite cyclic groups.
Next,  we embed
the Vilenkin intervals into digital rectangular envelopes and deduce the LPR inequality for rectangular envelopes.
Finally, this reduces the 
\(\operatorname{LPR}_{p,=}^{\oplus}\) estimate to the uniform boundedness
of Vilenkin partial-sum operators on
\(L^p(\Gm;X)\), which yields the theorem by \cite[Theorem 1.2]{CH2026}.

Throughout this section, we set
\(
 \Gamma_k:=[0,M_k),k\ge0.
\)
Then we have $E_k=P_{\Gamma_k}.$

\subsection{A randomized Fourier-coefficient estimate}

We shall use the following standard consequence of the type-\(2\) property, which is similar to \cite[Lemma~3.2]{HTY} but gives a quantitative bound in terms of $p$ and $t_{2,X}$.
We include the proof because the argument applies to arbitrary orthonormal
systems on probability spaces, rather than only to the trigonometric
system.

\begin{lemma}\label{lem:type2-randomized-coefficients}
Let \((H,\nu)\) be a probability space, let
\((\chi_j)_{j\in J}\) be a finite orthonormal family of scalar-valued
functions in \(L^2(H)\), and suppose that \(X\) is a Banach space with type \(2\). Then, for
every \(2\le p<\infty\) and every \(g\in L^p(H;X)\),
\begin{equation*}
\left(
\mathbb E_\varepsilon
\left\|
\sum_{j\in J}\varepsilon_j
\int_H g(t)\overline{\chi_j(t)}\,d\nu(t)
\right\|_X^p
\right)^{1/p}
\lesssim_{p}
t_{2,X}\|g\|_{L^2(H;X)}
\le
t_{2,X}\|g\|_{L^p(H;X)}.
\end{equation*}
\end{lemma}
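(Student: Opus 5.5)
Set $x_j:=\int_H g(t)\overline{\chi_j(t)}\,d\nu(t)\in X$. We have to bound $\bigl(\mathbb E_\varepsilon\|\sum_j\varepsilon_jx_j\|_X^p\bigr)^{1/p}$.

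First, the Kahane--Khintchine inequality lets us replace the $p$-th moment by the second moment at cost $\lesssim_p 1$ (indeed $\lesssim\sqrt p$). Second, the Gaussian comparison of Remark~\ref{rgeq}, or the contraction principle comparing Rademacher and Gaussian sums (Rademacher sums are dominated by Gaussian sums up to $\sqrt{\pi/2}$), reduces matters to estimating
\[
\Bigl(\mathbb E_\gamma\Bigl\|\sum_{j}\gamma_jx_j\Bigr\|_X^2\Bigr)^{1/2}.
\]

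The key point is not to apply type~$2$ to the $x_j$ directly, since $\sum_j\|x_j\|^2$ need not be controlled by $\|g\|_{L^2}^2$ in a general Banach space. Instead, use the rotation invariance of Gaussians. Let $Q$ be the orthogonal projection in $L^2(H)$ onto $\operatorname{span}\{\chi_j\}$. Write $\sum_j\gamma_jx_j=\langle g,W\rangle$ with $W:=\sum_j\gamma_j\chi_j$; this is a Gaussian process indexed by $L^2(H)$ restricted to the finite-dimensional range of $Q$. Approximate $g$ in $L^2(H;X)$ by simple functions $g=\sum_{i}\mathbf 1_{A_i}y_i$ with disjoint $A_i$, and set $e_i:=\nu(A_i)^{-1/2}\mathbf 1_{A_i}$, an orthonormal family. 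Then
\[
\sum_j\gamma_jx_j=\sum_i\nu(A_i)^{1/2}\,y_i\,\langle W,e_i\rangle ,
\]
where $\langle W,e_i\rangle=\langle W,Qe_i\rangle$ are jointly Gaussian with covariance matrix $(\langle Qe_i,Qe_{i'}\rangle)_{i,i'}\le I$. By the Gaussian comparison (Anderson's inequality, or the contraction property of Gaussian sums under operators of norm $\le1$: $\mathbb E\|\sum_i T\gamma_i y_i\|^2\le\|T\|^2\,\mathbb E\|\sum_i\gamma_iy_i\|^2$, obtained by writing the covariance as $TT^*$ and adding an independent Gaussian with covariance $I-TT^*$, then applying Jensen), this is dominated by
\[
\mathbb E\Bigl\|\sum_i\gamma_i\nu(A_i)^{1/2}y_i\Bigr\|^2\le G_{2,X}^2\sum_i\nu(A_i)\|y_i\|^2=G_{2,X}^2\|g\|_{L^2(H;X)}^2 .
\]
Since $G_{2,X}\le t_{2,X}$, this gives the bound $\lesssim_p t_{2,X}\|g\|_{L^2}$.

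For general $g$, the left side depends continuously on $g$ in $L^2(H;X)$ (finitely many coefficients, each continuous), so the estimate passes to the limit.

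The final inequality, $\|g\|_{L^2}\le\|g\|_{L^p}$, holds because $\nu$ is a probability measure and $p\ge2$.

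The main obstacle is the Gaussian covariance-domination step, which must be justified carefully for complex Gaussians. To do so, realize $(\langle W,e_i\rangle)_i=(\sum_l T_{il}\tilde\gamma_l)$ with $\|T\|\le1$, and use that adding an independent centered Gaussian and conditioning only increases the second moment of the norm, by Jensen.
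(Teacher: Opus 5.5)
Your proposal is correct and follows essentially the same route as the paper: Kahane--Khintchine plus Rademacher--Gaussian comparison, approximation of $g$ by simple functions with the normalized indicators $\nu(A_i)^{-1/2}\mathbf 1_{A_i}$, and then Gaussian type~$2$. Your covariance-domination step is the same ideal property of Gaussian sums that the paper cites for the Bessel contraction $U=(u_{j,\ell})$ with $u_{j,\ell}=\nu(A_\ell)^{-1/2}\int_{A_\ell}\overline{\chi_j}\,d\nu$. There the Gram matrix of the $Qe_i$ is bounded by the identity, and you prove the domination by hand by adding an independent Gaussian and applying Jensen.
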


\begin{proof}
By the Kahane--Khintchine inequalities, it suffices to prove
\[
\left(
\mathbb E_\varepsilon
\left\|
\sum_{j\in J}\varepsilon_j
\int_H g(t)\overline{\chi_j(t)}\,d\nu(t)
\right\|_X^2
\right)^{1/2}
\lesssim_{p}
t_{2,X}\|g\|_{L^2(H;X)}.
\]
By a density argument, we may assume that
\[
g=\sum_{\ell=1}^N\mathbf1_{A_\ell}x_\ell,
\]
where the sets \(A_\ell\) are pairwise disjoint and have positive measure.
Set
\[
y_\ell:=\nu(A_\ell)^{1/2}x_\ell,
\qquad
u_{j,\ell}
:=
\nu(A_\ell)^{-1/2}
\int_{A_\ell}\overline{\chi_j(t)}\,d\nu(t).
\]
By Bessel's inequality, the operator
\[
U=(u_{j,\ell})_{j\in J,\,1\le\ell\le N},
\qquad
Ue_\ell=\sum_{j\in J}u_{j,\ell}e_j,
\]
is a contraction from \(\ell^2_N\) to \(\ell^2(J)\). Let
\[
T:\ell^2_N\to X,
\qquad
Te_\ell=y_\ell,
\]
and let \((\gamma_j)_{j\in J}\) and
\((\gamma_\ell')_{1\le\ell\le N}\) be two independent families of
normalized complex Gaussian random variables. The ideal property of Gaussian norms
\cite[Theorem~9.1.10]{HNVWII}, applied to \(T\) and \(U^{\mathsf T}\), gives
\begin{align*}
\left(
\mathbb E_\gamma
\left\|
\sum_{j\in J}\gamma_j
\int_Hg(t)\overline{\chi_j(t)}\,d\nu(t)
\right\|_X^2
\right)^{1/2}
&=
\left(
\mathbb E_\gamma
\left\|
\sum_{\ell=1}^N
\left(\sum_{j\in J}\gamma_ju_{j,\ell}\right)y_\ell
\right\|_X^2
\right)^{1/2}
\\
&=
\left(
\mathbb E_\gamma
\left\|
\sum_{j\in J}\gamma_jTU^{\mathsf T}e_j
\right\|_X^2
\right)^{1/2}
\\
&\le
\|U^{\mathsf T}\|_{\ell^2(J)\to\ell^2_N}
\left(
\mathbb E_{\gamma'}
\left\|
\sum_{\ell=1}^N\gamma_\ell'Te_\ell
\right\|_X^2
\right)^{1/2}
\\
&\le
\left(
\mathbb E_{\gamma'}
\left\|
\sum_{\ell=1}^N\gamma_\ell'y_\ell
\right\|_X^2
\right)^{1/2}.
\end{align*}
The Gaussian formulation of the type-\(2\) inequality in
Remark~\ref{rgeq} therefore yields
\[
\left(
\mathbb E_\gamma
\left\|
\sum_{j\in J}\gamma_j
\int_Hg(t)\overline{\chi_j(t)}\,d\nu(t)
\right\|_X^2
\right)^{1/2}
\le t_{2,X}
\left(\sum_{\ell=1}^N\|y_\ell\|_X^2\right)^{1/2}
=
t_{2,X}\|g\|_{L^2(H;X)}.
\]
By the standard Gaussian comparison inequality
\cite[Proposition~12.11]{DiestelJarchowTonge1995}, 
\begin{align*}
\left(
\mathbb E_\varepsilon
\left\|
\sum_{j\in J}\varepsilon_j
\int_Hg(t)\overline{\chi_j(t)}\,d\nu(t)
\right\|_X^2
\right)^{1/2}
\le
\sqrt{\pi}
\left(
\mathbb E_\gamma
\left\|
\sum_{j\in J}\gamma_j
\int_Hg(t)\overline{\chi_j(t)}\,d\nu(t)
\right\|_X^2
\right)^{1/2}.
\end{align*}
This completes the proof.
\end{proof}

The following lemma yields the LPR inequality
for equal-scale cosets.

\begin{lemma}\label{lem:oplus-coset-randomization}
Let \(X\) be a Banach space with type \(2\), \(2\le p<\infty\)  and 
\((D_\kappa)_{\kappa\in\Lambda}\) be a finite family of distinct cosets
\(
D_\kappa=\kappa\oplus\Gamma_k,
\)
where $\Lambda\subset\mathbb N$ is the collection of $\kappa$ such that the first \(k\) digits of every \(\kappa\) vanish. Then for any $f\in L^p(\Gm;X),$
\begin{equation}\label{eq:oplus-coset-randomization}
\left(
\mathbb E_{\varepsilon}
\left\|
\sum_{\kappa\in\Lambda}
\varepsilon_\kappa P_{D_\kappa}f
\right\|_{L^p(\Gm;X)}^p
\right)^{1/p}
\lesssim_{p}t_{2,X}
\|f\|_{L^p(\Gm;X)}.
\end{equation}
\end{lemma}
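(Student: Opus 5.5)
We reduce the estimate to Lemma~\ref{lem:type2-randomized-coefficients} by viewing each coset projection as a Fourier coefficient with respect to a suitable orthonormal family on a product probability space. Since the first $k$ digits of every $\kappa\in\Lambda$ vanish, we have $\kappa\oplus n=\kappa+n$ for $n\in\Gamma_k$. By \eqref{finer}-type reasoning, i.e. the modulation identity \eqref{eq:modulation-identity},
\[
 P_{D_\kappa}f=\psi_\kappa E_k(\overline{\psi_\kappa}f).
\]
The character $\psi_\kappa$ depends only on the coordinates $x_j$ with $j\ge k$. Write $x=(y,z)$ with $y=(x_0,\dots,x_{k-1})$ and $z=(x_k,x_{k+1},\dots)$, so that $\Gm=G^{(k)}\times G'$ with product Haar measure. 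Then $E_k$ is integration in $z$, and for fixed $x=(y,z)$ we obtain
\[
 P_{D_\kappa}f(y,z)=\psi_\kappa(z)\int_{G'}f(y,w)\overline{\psi_\kappa(w)}\,dw .
\]
Here we abuse notation, writing $\psi_\kappa(z)$ because $\psi_\kappa$ depends only on $z$.

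Now fix $x=(y,z)$ and apply Lemma~\ref{lem:type2-randomized-coefficients} on $H=G'$. Take $g=f(y,\cdot)$ and the orthonormal family $\chi_\kappa=\psi_\kappa|_{G'}$. These are distinct characters of $G'$ because the $\kappa$ are distinct and their nonzero digits are in positions $\ge k$, so the family is orthonormal. The unimodular scalar factors $\psi_\kappa(z)$ must first be removed. By the complex contraction principle (Kahane), multiplying the Rademacher coefficients by unimodular scalars changes the $L^p(\Omega_\varepsilon;X)$ norm by at most a universal constant factor. Hence, for each fixed $(y,z)$,
\[
 \Bigl(\mathbb E_\varepsilon\Bigl\|\sum_\kappa\varepsilon_\kappa P_{D_\kappa}f(y,z)\Bigr\|_X^p\Bigr)^{1/p}
 \lesssim_p t_{2,X}\,\|f(y,\cdot)\|_{L^p(G';X)} .
\]
The right-hand side is independent of $z$.

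It remains to raise this to the $p$-th power, integrate over $y$ and $z$, and apply Fubini. This gives
\[
 \mathbb E_\varepsilon\Bigl\|\sum_\kappa\varepsilon_\kappa P_{D_\kappa}f\Bigr\|_{L^p(\Gm;X)}^p
 \lesssim_p t_{2,X}^p\int\|f(y,\cdot)\|_{L^p(G';X)}^p\,dy
 =t_{2,X}^p\|f\|_{L^p(\Gm;X)}^p ,
\]
which is \eqref{eq:oplus-coset-randomization}.

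The main point requiring care is the measure-theoretic justification. We must check that the identification of $E_k$ with partial integration holds for Bochner functions. We also need $f(y,\cdot)\in L^p(G';X)$ for almost every $y$; this is handled by first treating Vilenkin polynomials and then using density together with the boundedness of each $P_{D_\kappa}$, which is a conditional expectation conjugated by unimodular multiplication. Beyond that, the argument is a direct application of the earlier lemma.
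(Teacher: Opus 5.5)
Your proposal is correct and is essentially the paper's proof: both split $\Gm=G_{<k}\times G_{\ge k}$ and use the modulation identity to write $P_{D_\kappa}f(u,v)=\psi_\kappa(v)\int_{G_{\ge k}}f(u,w)\overline{\psi_\kappa(w)}\,d\mu(w)$; both then remove the unimodular factors $\psi_\kappa(v)$ by the complex contraction principle, apply Lemma~\ref{lem:type2-randomized-coefficients} fibrewise to the orthonormal family $(\psi_\kappa)_{\kappa\in\Lambda}$ on $G_{\ge k}$, and conclude by Fubini. The only differences are cosmetic: you fix the full point $(y,z)$ rather than only the first-$k$-digit variable, and your density remark spells out a measure-theoretic step the paper leaves implicit.
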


\begin{proof}
For $k\in\mathbb N,$ write
\[
\Gm=G_{<k}\times G_{\ge k},
\qquad
G_{<k}:=\prod_{j<k}\mathbb Z_{m_j},
\qquad
G_{\ge k}:=\prod_{j\ge k}\mathbb Z_{m_j}.
\]
For \(u\in G_{<k}\) and \(\kappa\in\Lambda\), define
\[
a_\kappa(u)
:=
\int_{G_{\ge k}}
f(u,v)\overline{\psi_\kappa(u,v)}\,d\mu(v).
\]
Since the first \(k\) digits of \(\kappa\) vanish,
\(\psi_\kappa:(u,v)\mapsto \psi_\kappa(u,v)\) depends only on the coordinate $v$. Moreover, by the property of the conditional expectation $E_k,$
\begin{align*}
E_kf(u,v)
=
\int_{G_{\ge k}}f(u,w)\,d\mu(w).
\end{align*}
The modulation identity \eqref{eq:modulation-identity} therefore gives
\begin{equation*}
P_{D_\kappa}f(u,v)
=
\psi_\kappa(v)a_\kappa(u).
\end{equation*}

For fixed \(u\), applying
Lemma~\ref{lem:type2-randomized-coefficients} to the finite orthonormal
family \((\psi_\kappa)_{\kappa\in\Lambda}\) in \(L^2(G_{\ge k})\), together
with the complex contraction principle, gives
\begin{align*}
\left(
\mathbb E_\varepsilon
\left\|
\sum_{\kappa\in\Lambda}
\varepsilon_\kappa\psi_\kappa(\cdot)a_\kappa(u)
\right\|_{L^p(G_{\ge k};X)}^p
\right)^{1/p}
&\lesssim_p
\left(
\mathbb E_\varepsilon
\left\|
\sum_{\kappa\in\Lambda}
\varepsilon_\kappa a_\kappa(u)
\right\|_X^p
\right)^{1/p}
\\
&\lesssim_{p}t_{2,X}
\|f(u,\cdot)\|_{L^2(G_{\ge k};X)}\\
&\le
t_{2,X}\|f(u,\cdot)\|_{L^p(G_{\ge k};X)}.
\end{align*}
Taking the \(p\)-th powers, integrating in \(u\), and then taking the
\(p\)-th root proves \eqref{eq:oplus-coset-randomization}.
\end{proof}

\subsection{A two-parameter estimate on finite cyclic groups}
We now use the sampling factorization to establish a two-parameter randomization estimate for group
Fourier projections on \(\mathbb Z_m\). We adopt the convention in \cite[Section 4, Section 5]{CH2026}.
For $m\ge2$, let
\begin{equation}\label{eq:centered-residue-system}
 R_m
 :=
 \left\{-\left\lfloor\frac m2\right\rfloor,
 \ldots,\left\lceil\frac m2\right\rceil-1\right\}
\end{equation}
be the centered complete residue system, and let
$\rho_m:\mathbb Z_m\to R_m$ be the centered representative map given by \[
\rho_m(q)
=
\begin{cases}
q,
& 0\le q\le \left\lceil \dfrac{m}{2}\right\rceil-1,\\[6pt]
q-m,
& \left\lceil \dfrac{m}{2}\right\rceil\le q\le m-1.
\end{cases}
\] For an
interval $J\subset R_m$, define the centered cyclic Fourier projection $P_{m,J}$ by
\begin{equation}\label{centered pro}
 P_{m,J}h(x)
 :=
 \sum_{q\in J}\widehat h(q\bmod m)e^{2\pi iqx/m},
 \qquad x\in\mathbb Z_m.
\end{equation}

Consider the canonical embedding
\[
\vartheta_m:\mathbb Z_m\to\mathbb T,
\qquad
a\mapsto e^{2\pi ia/m}.
\]
We identify \(\mathbb Z_m\) with its image under this embedding. For
\(a\in\mathbb Z_m\), let
\((a\mid a+1\bmod m)\) denote the open arc from
\(\vartheta_m(a)\) to \(\vartheta_m(a+1\bmod m)\) in the cyclic order
\[
0\longrightarrow1\longrightarrow\cdots\longrightarrow m-1
\longrightarrow0.
\] See Figure \ref{fig:cyclic-order-Zm}.

\begin{figure}[htbp]
    \centering
    \includegraphics[width=0.5\textwidth]
    {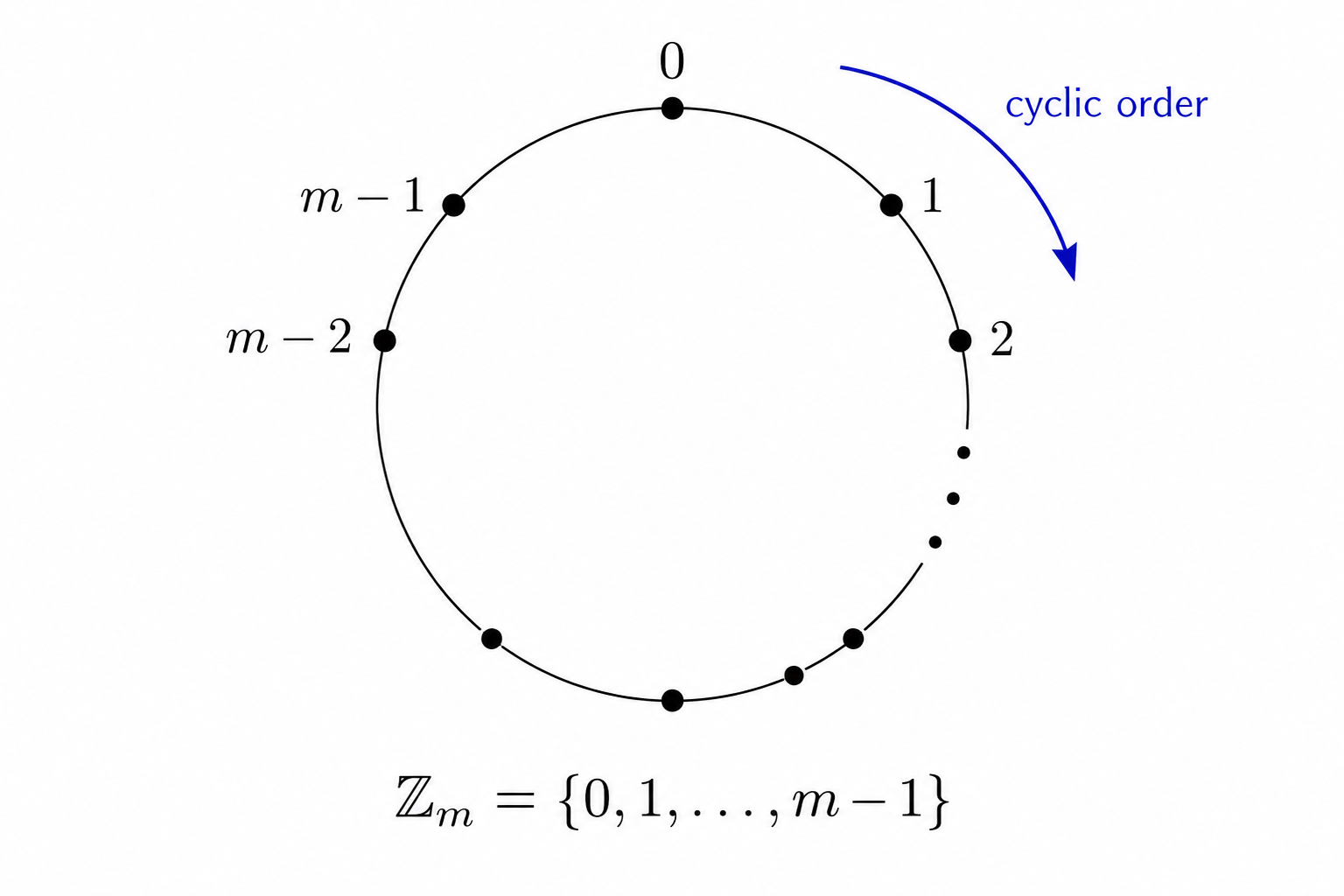}
    \caption{The cyclic ordering of the elements of \(\mathbb Z_m\).}
    \label{fig:cyclic-order-Zm}
\end{figure}

For an interval
\[
J=\{a,\ldots,a+L\bmod m\}\subsetneq\mathbb Z_m,
\]
define
\[
J^\circ
:=
\bigcup_{j=0}^{L-1}
(a+j\bmod m\mid a+j+1\bmod m).
\]
Let \(P_J^{\mathbb Z_m}\) denote the group Fourier projection on
\(\mathbb Z_m\) associated with the interval \(J\):
\[
P_J^{\mathbb Z_m}h(x)
:=
\sum_{q\in J}
\widehat h(q)e^{2\pi iqx/m},
\qquad x\in\mathbb Z_m.
\]

\begin{lemma}\label{lem:two-parameter-cyclic-equal}
Let \(X\) be a $\operatorname{UMD}$ Banach space of type \(2\), and let \(2\le p<\infty\). For
every \(m\ge2\), let \((h_\rho)_\rho\) be a finite family of \(X\)-valued
functions on \(\mathbb Z_m\). For each fixed \(\rho\), let
\((J_{\rho,j})_j\) be a pairwise disjoint family of intervals in
\(\mathbb Z_m\), and assume that all the intervals \(J_{\rho,j}\), over
both indices \(\rho\) and \(j\), have the same cardinality. Then
\begin{align}\label{eq:two-parameter-cyclic-equal}
\left(
\mathbb E_{\varepsilon,\delta}\left\|
\sum_{\rho,j}
\delta_\rho\varepsilon_{\rho,j}
P_{J_{\rho,j}}^{\mathbb Z_m}h_\rho
\right\|_{
L^p(\mathbb Z_m;X)}^p
\right)^{1/p}
\lesssim_{p}t_{2,X}\beta_{p,X}^2
\left(
\mathbb E_{\delta}\left\|
\sum_\rho\delta_\rho h_\rho
\right\|_{L^p(\mathbb Z_m;X)}^p
\right)^{1/p}.
\end{align}
Here \((\delta_\rho)_\rho\) and
\((\varepsilon_{\rho,j})_{\rho,j}\) are mutually independent i.i.d.\ Rademacher families.
\end{lemma}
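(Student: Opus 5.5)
The plan is to deduce \eqref{eq:two-parameter-cyclic-equal} from the equal-length torus theorem \cite[Theorem~1.1]{HTY}, which holds because $X$ is UMD of type~$2$. The two parameters are decoupled by a modulation trick, and the result is then transferred from $\mathbb T$ to $\mathbb Z_m$ through the sampling factorization of \cite{CH2026}. Two reductions come first.

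\emph{Removing wrap-around.} For each fixed $\rho$, the intervals $J_{\rho,j}$ are disjoint arcs of the same length. Choose $a_\rho\in\mathbb Z_m$ equal to the initial point of one of them. Replace $h_\rho$ by $e^{-2\pi i a_\rho x/m}h_\rho$ and each $J_{\rho,j}$ by $J_{\rho,j}-a_\rho$. Then every arc, read in the window $[0,m)$, is a genuine interval of integers; none of them wraps, because none of them contains the cut point. The modulation factors are pointwise unimodular scalars. On the left-hand side they are removed by the complex contraction principle; on the right-hand side, for each fixed $x$ they are unimodular constants multiplying $\delta_\rho h_\rho(x)$, so the contraction principle applies again. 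Both steps cost only absolute constants.

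\emph{Decoupling $\rho$ by modulation on $\mathbb T$.} Lift each $h_\rho$ to a trigonometric polynomial $H_\rho$ on $\mathbb T$ with frequencies in $[0,m)$, chosen via the sampling factorization of \cite{CH2026}. Then set $F:=\sum_\rho e^{2\pi i K_\rho t}H_\rho$, where the $K_\rho$ are very rapidly increasing multiples of $m$ (say $K_\rho=m\,4^{\rho}L$ with $L$ large). The shifted intervals $J_{\rho,j}+K_\rho\subset\mathbb Z$ are then pairwise disjoint over all $(\rho,j)$ and of common length. Applying \cite[Theorem~1.1]{HTY} to $F$ gives
\[
\Bigl(\mathbb E_\varepsilon\Bigl\|\sum_{\rho,j}\varepsilon_{\rho,j}e^{2\pi iK_\rho t}P^{\mathbb T}_{J_{\rho,j}}H_\rho\Bigr\|^p_{L^p(\mathbb T;X)}\Bigr)^{1/p}\lesssim_p t_{2,X}\beta_{p,X}\|F\|_{L^p(\mathbb T;X)} .
\]
On the left-hand side, $(\varepsilon_{\rho,j})$ has the same distribution as $(\delta_\rho\varepsilon_{\rho,j})$, and the unimodular factors $e^{2\pi iK_\rho t}$ disappear by contraction. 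On the right-hand side, the blocks $e^{2\pi iK_\rho t}H_\rho$ have frequency supports in lacunary, well-separated windows, so they are unions of vector-valued Littlewood--Paley pieces (Paley/Bourgain blocks). Vector-valued Littlewood--Paley theory in UMD spaces then gives
\[
\|F\|_{L^p(\mathbb T;X)}\lesssim_p \beta_{p,X}\Bigl(\mathbb E_\delta\Bigl\|\sum_\rho\delta_\rho e^{2\pi iK_\rho t}H_\rho\Bigr\|^p_{L^p(\mathbb T;X)}\Bigr)^{1/p}
\lesssim \beta_{p,X}\Bigl(\mathbb E_\delta\Bigl\|\sum_\rho\delta_\rho H_\rho\Bigr\|^p_{L^p(\mathbb T;X)}\Bigr)^{1/p},
\]
where the second step again uses the contraction principle pointwise in $t$. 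This yields the two-parameter torus inequality with constant $t_{2,X}\beta_{p,X}^2$. If this block estimate turns out to require the separation to grow with the number of blocks, I would instead take $K_\rho$ to be any sequence with ratios at least $4$, which is enough for the standard Paley decomposition.

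\emph{Transfer to $\mathbb Z_m$.} This is the step I expect to be the main obstacle: I need norm equivalences between $L^p(\mathbb Z_m;X)$ and $L^p(\mathbb T;X)$ that are uniform in $m$ and compatible with the projections $P^{\mathbb Z_m}_J$. I would use the exact sampling factorization from \cite{CH2026}. Sampling a torus polynomial at the points $\vartheta_m(\mathbb Z_m)$ commutes with frequency projections onto sets contained in a single window of length $m$, and so intertwines $P^{\mathbb T}_{J}$ with $P^{\mathbb Z_m}_{J}$. Since no interval wraps after the first reduction, sampling $\sum_{\rho,j}\delta_\rho\varepsilon_{\rho,j}P^{\mathbb T}_{J_{\rho,j}}H_\rho$ yields exactly the cyclic expression on the left of \eqref{eq:two-parameter-cyclic-equal}. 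I would obtain the lower bound for the left-hand side by averaging over translates (Marcinkiewicz--Zygmund/transference along $\mathbb Z_m\subset\mathbb T$ rotated by $s\in[0,1/m)$), and the upper bound for $\|\sum_\rho\delta_\rho H_\rho\|$ by the extension operator of \cite{CH2026}. That extension operator is a torus multiplier which is $L^p(X)$-bounded uniformly in $m$; any UMD cost it incurs should be absorbed into the exponent count. The delicate point is to choose the lift so that both sides are controlled simultaneously, and to do so with $\delta$ inside the norm. I will handle this by applying the scalar-uniform transference to the space $L^p(\Omega_\delta;X)$, which is UMD with the same constant $\beta_{p,X}$.
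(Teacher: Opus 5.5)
Your architecture matches the paper's: modulate away the wrap-around, embed the two-parameter problem into the equal-length torus theorem by frequency shifts, then transfer back to $\mathbb Z_m$. Your cut at the initial point of one arc is a correct and slightly simpler form of the paper's Step~1. The gap is that the statement is quantitative, and your route cannot reach $t_{2,X}\beta_{p,X}^2$.

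\cite[Theorem~1.1]{HTY} states no constant. Tracking its proof (Lemma~\ref{Hytonen}) gives $t_{2,X}\beta_{p,X}^2$; the factor $\beta_{p,X}^2$ comes from passing from de la Vall\'ee Poussin multipliers to sharp projections. Your constant $t_{2,X}\beta_{p,X}$ is therefore unsupported, and the torus step alone already uses the whole budget. You then add two further UMD-dependent losses.

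First, applying \cite[Theorem~1.1]{HTY} to $F=\sum_\rho e^{2\pi iK_\rho t}H_\rho$ without the signs forces the reverse Littlewood--Paley bound $\|F\|_{L^p(\mathbb T;X)}\lesssim(\mathbb E_\delta\|\sum_\rho\delta_\rho e^{2\pi iK_\rho t}H_\rho\|_{L^p(\mathbb T;X)}^p)^{1/p}$. This needs lacunary $K_\rho$ and costs an additional UMD constant.

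Second, a trigonometric polynomial with frequencies in a window of length $m$ whose samples are the modulated $h_\rho$ is necessarily the trigonometric interpolant. That is not what the sampling factorization of \cite{CH2026} provides. Its $L^p(\mathbb Z_m;Y)\to L^p(\mathbb T;Y)$ bound, uniform in $m$, is a discrete Hilbert-transform-type estimate, again paid for in powers of $\beta_{p,X}$.

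The opposite comparison, bounding the cyclic left-hand side by the torus one, is harmless: a de la Vall\'ee Poussin kernel argument works in any Banach space. But with the exponent fixed, there is nothing to ``absorb'' the two losses into. Carried out, your argument gives an $m$-uniform bound with a higher power of $\beta_{p,X}$, not \eqref{eq:two-parameter-cyclic-equal}.

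Both losses disappear with the paper's two devices. First, put the signs into the test function. For each fixed $\delta$, apply Lemma~\ref{Hytonen} to $G_\delta=\sum_\rho\delta_\rho e^{2\pi iK_\rho t}H_\rho$. Since $P^{\mathbb T}_{K_\rho+J}G_\delta=\delta_\rho e^{2\pi iK_\rho t}P^{\mathbb T}_{J}H_\rho$ for each shifted interval $J$ attached to $\rho$, averaging in $\delta$ and removing the unimodular factors by contraction gives \eqref{eq:torus-two-parameter-final}. No Littlewood--Paley theory and no lacunarity are needed.

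Second, transfer through the isometry $\mathcal J_m$ of \cite{CH2026} rather than through band-limited interpolation. Use the factorization $\mathcal J_mP_{m,J}=\mathcal J_mD_m\mathcal U_m\mathcal E_mP^{\mathbb T}_JT^{\mathbb T}_{\eta(\cdot/m)}\mathcal J_m$ for $J\subset R_m$. Here the smooth symbol $\eta(\cdot/m)$, with $\eta=\Phi$ on $[-\tfrac12,\tfrac12]$, compensates for $\mathcal J_m$ not being band-limited. The map $\mathcal J_m$ is an isometry, and both $\mathcal J_mD_m\mathcal U_m\mathcal E_m$ and $T^{\mathbb T}_{\eta(\cdot/m)}$ (by de Leeuw) are bounded on $L^p(\mathbb T;Y)$ for an arbitrary Banach space $Y$, with constants depending only on $p$. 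The only geometric cost is then the torus constant $t_{2,X}\beta_{p,X}^2$.
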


We shall use  the
equal-length LPR inequality on $\mathbb T$, established by Hyt{\"o}nen
et al.~\cite[Theorem~1.1]{HTY}. Although the explicit dependence of the
$\operatorname{LPR}_{p,=}^{\mathbb T}$ constant is not stated in their
article, it can be extracted from their proof. We record the resulting
quantitative estimate in the following lemma.

\begin{lemma}
\label{Hytonen}
Let \(X\) be a $\operatorname{UMD}$ Banach space with type~\(2\), and let
\(2\le p<\infty\). For every finite family \((I_j)_j\)
of pairwise disjoint intervals in \(\mathbb Z\) of equal
cardinality and every \(f\in L^p(\mathbb T;X)\),
\begin{equation}\label{eq:torus-equal-lpr-quantitative}
 \left(
\mathbb E_\varepsilon\left\|
  \sum_j\varepsilon_jP_{I_j}^{\mathbb T}f
 \right\|_{L^p(\mathbb T;X)}^p
\right)^{1/p}
 \lesssim_p
 t_{2,X}\beta_{p,X}^2
 \|f\|_{L^p(\mathbb T;X)}.
\end{equation}
\end{lemma}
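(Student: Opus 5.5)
The statement to be proved is Lemma~\ref{Hytonen}: the HTY equal-length LPR inequality on $\mathbb T$, with the constant tracked explicitly as $t_{2,X}\beta_{p,X}^2$. The plan is to re-run the Hyt\"onen--Torrea--Yakubovich argument and record the constant at each step. We may assume all intervals $I_j$ have common length $L$.

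Write each $I_j=[a_j,a_j+L)$. The argument splits into two steps.

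\textbf{Step 1 (smooth version via type~$2$).} Let $\phi_L$ be a de la Vall\'ee Poussin-type multiplier: equal to $1$ on $[0,L)$, supported in $[-L,2L)$, piecewise linear. Its kernel $K_L$ satisfies $\|K_L\|_{L^1(\mathbb T)}\le 3$. Define the smoothed operators $V_jf:=e_{a_j}\,(K_L*(e_{-a_j}f))$.

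The $j$ can be split into at most $C$ subfamilies, say by residue of $a_j/L$ modulo a fixed integer. Within one subfamily the supports $a_j+[-L,2L)$ are disjoint, and the inflated supports are well separated.

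For fixed $x$ we write
\[
\sum_j\varepsilon_jV_jf(x)=\int_{\mathbb T}\sum_j\varepsilon_j e_{a_j}(x)e_{-a_j}(y)K_L(x-y)f(y)\,dy .
\]
The goal is to apply Lemma~\ref{lem:type2-randomized-coefficients} to this expression. To do so, treat $f$ locally at scale $1/L$ and decompose $K_L$ into pieces adapted to dyadic annuli $|x-y|\sim 2^i/L$. Each piece is controlled by $\min(L,\,2^{-2i}L)$, which gives the summability over $i$.

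On each arc $Q$ of length $2^i/L$, the family $\{e_{-a_j}\}$ restricted to $Q$ is not orthonormal. However, after splitting into residue classes of $a_j$ modulo $L2^{-i}$ it is almost orthogonal, with controlled Bessel constant. Lemma~\ref{lem:type2-randomized-coefficients} then bounds the randomized sum by $t_{2,X}$ times a local $L^2$ average of $f$.

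Summing over $i$ and integrating in $x$ gives $t_{2,X}\|Mf\|_p$-type control. Since $p\ge2$, the local $L^2$ averages are bounded through the maximal function on $L^{p/2}$, or more directly through Jensen. This should yield
\[
\Bigl(\mathbb E\bigl\|\textstyle\sum_j\varepsilon_jV_jf\bigr\|_p^p\Bigr)^{1/p}\lesssim_p t_{2,X}\|f\|_p .
\]

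\textbf{Step 2 (sharp projections via UMD).} Since $\phi_L\equiv1$ on $[0,L)$, we have
\[
P_{I_j}=e_{a_j}P_{[0,L)}e_{-a_j}\circ V_j .
\]
Moreover $P_{[0,L)}=e_0\cdot(\text{Riesz projection})-e_L\cdot(\text{Riesz projection})e_{-L}$-type combination of two Riesz projections $\tfrac12(I+iH)$.

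So $\sum_j\varepsilon_jP_{I_j}f$ is obtained from $\sum_j\varepsilon_jV_jf$ by applying two modulated Hilbert transforms, each acting diagonally in $j$ with modulation $e_{a_j}$ or $e_{a_j+L}$. We need the Hilbert transform to act boundedly on $\mathrm{Rad}(L^p(X))$ even with $j$-dependent modulations. Because modulation commutes with the randomization only up to these phases, this is precisely the vector-valued (R-bounded) Hilbert transform estimate. The family $\{e_aHe_{-a}\}_a$ is R-bounded on $L^p(\mathbb T;X)$ with constant $\lesssim\beta_{p,X}$, by Bourgain's transference and the standard fact that R-bounds for Fourier multipliers of UMD spaces are controlled by $\beta_{p,X}$ (one factor per UMD use).

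Two such applications give $\beta_{p,X}^2$. Combined with Step 1 this yields \eqref{eq:torus-equal-lpr-quantitative}.

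\textbf{Main obstacle.} The main difficulty is Step 1. Extracting a clean $t_{2,X}$ bound, rather than one involving cotype or UMD, requires carefully arranging the local almost-orthogonality so that Lemma~\ref{lem:type2-randomized-coefficients} applies with uniform constants. Our first attempt there would be the simplest route: freeze $x$, view $y\mapsto K_L(x-y)f(y)$ as $g$ on $\mathbb T$, and apply Lemma~\ref{lem:type2-randomized-coefficients} directly with the orthonormal system $\{e_{-a_j}\}$. This gives the pointwise bound $t_{2,X}\|K_L(x-\cdot)f\|_{L^2}$. Its $L^p_x$ norm is $\le t_{2,X}\|K_L\|_{L^2}^{1/2}$-type quantities, which is not uniform in $L$. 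The dyadic-scale localization described above repairs exactly this loss.
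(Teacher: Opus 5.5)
The overall architecture matches the paper's. You control a de la Vall\'ee Poussin smoothing by type~$2$ via Lemma~\ref{lem:type2-randomized-coefficients}, then use a UMD step to recover the sharp projections at cost $\beta_{p,X}^2$. The paper runs the first step in dual form, through $\|g_\lambda\|_{L^2(\mathbb T;X^*)}\lesssim t_{2,X}\|\lambda\|_*$ and the kernel argument of \cite{HTY}, and takes the second from \cite[Lemma~4.3]{CH2026}. Your arc-by-arc primal use of the coefficient lemma is a legitimate variant.

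\textbf{Gap in Step~1.} The gap is in the step you flag as the crux. For the coefficient lemma to give a constant independent of $L$, the exponentials $\{e_{a_j}\}$ need a uniformly bounded Bessel constant for the normalized measure on each arc $Q$ of length $2^i/L$; the lemma's proof uses exactly Bessel's inequality. Splitting the $a_j$ into residue classes modulo $L2^{-i}$ does make each class exactly orthogonal on $Q$. But there are about $L2^{-i}$ classes, and $L2^{-i}$ need not even be an integer. Recombining them, whether by the triangle inequality or by type~$2$ across the independent class sums, costs a positive power of $L2^{-i}$. At $i=0$ this is precisely the non-uniform loss you set out to remove.

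Two repairs work.

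(a) Recenter the smoothing on the lattice. The numbers $k_j:=\lfloor a_j/L\rfloor$ are distinct and $I_j\subset k_jL+[0,2L)$. Use a trapezoidal multiplier equal to $1$ on $[0,2L)$ and supported in $[-L,3L)$, modulated by $e_{k_jL}$. Then $P_{I_j}^{\mathbb T}=P_{I_j}^{\mathbb T}V_j$ still holds. Since all frequencies now lie in $L\mathbb Z$, the family $\{e_{k_jL}\}$ is exactly orthonormal for the normalized measure on any union of arcs of length $1/L$. So the lemma applies verbatim to each dyadic piece of the kernel, cut at multiples of $1/L$. This $1/L$-periodicity is what lets local $L^2$ averages be controlled by global ones in the dual argument.

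(b) Keep $a_j$ and prove the large-sieve bound $|Q|^{-1}\int_Q|\sum_jc_je_{a_j}|^2\le\frac{\pi^2}{4}\bigl(1+(L|Q|)^{-1}\bigr)\sum_j|c_j|^2$ for $L$-separated frequencies. This follows by majorizing $\1_Q$ with a sum of translated Fej\'er kernels of degree $<L$. The proof of Lemma~\ref{lem:type2-randomized-coefficients} then runs for Bessel families with constant $B$, at the price of a factor $\sqrt B$.

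In either case, at $p=2$ finish with Minkowski's inequality in $i$ and the contractivity of fixed-length arc averages on $L^{p/2}$, not with the maximal function.

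\textbf{Step~2 bookkeeping.} Step~2 reaches the right exponent for the wrong reasons. Since $P_{[0,L)}^{\mathbb T}=P_{[0,\infty)}^{\mathbb T}-e_LP_{[0,\infty)}^{\mathbb T}e_{-L}$ is a difference, the two R-bounds add rather than multiply. Also, an R-bound $\lesssim\beta_{p,X}$ for $\{e_aHe_{-a}\}_a$ is not available: already for $H$ alone the best general bound known is of order $\beta_{p,X}^2$.

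What is true, and suffices, is that $\{P_{[a,\infty)}^{\mathbb T}\}_{a\in\mathbb Z}$ is R-bounded on $L^p(\mathbb T;X)$ with constant $\lesssim\beta_{p,X}^2$. To see this, apply the half-plane multiplier $\1_{\{n\ge k\}}$ on $\mathbb T^2$ to $\sum_j\varepsilon_jf_j(x)e_{a_j}(y)$, viewed with values in $L^p(\Omega_\varepsilon;X)$. The shear $(x,y)\mapsto(x,x+y)$ turns this multiplier into a Riesz projection in one variable, and $\beta_{p,L^p(\Omega_\varepsilon;X)}=\beta_{p,X}$. Then remove the phases $e_{a_j}(y)$ by the contraction principle. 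This gives Step~2 with constant $\lesssim\beta_{p,X}^2$, consistent with the paper's appeal to \cite[Lemma~4.3]{CH2026}.
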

\begin{proof}[Sketch of proof]
We only provide a sketch of the proof.
We track the dependence on the geometric constants of \(X\)
in the argument of \cite[Section~3]{HTY}.
We first refine the dual estimate used in
\cite[Lemma~3.2]{HTY}.
For a finite set \(J\subset\mathbb Z\) and a family
\(\lambda=(\lambda_j)_{j\in J}\subset X^*\), define
\[
 \|\lambda\|_*
 :=
 \sup\left\{
  \left|\sum_{j\in J}\langle\lambda_j,x_j\rangle\right|:
  x_j\in X,\quad
  \left\|\sum_{j\in J}\varepsilon_jx_j
  \right\|_{\operatorname{Rad}(X)}\le1
 \right\}.
\]
The definition of \(\operatorname{Rad}(X)\) can be found in \cite[Proof of Lemma~3.1]{HTY}.
Set
\[
 g_\lambda(t)
 :=
 \sum_{j\in J}e^{-2\pi ijt}\lambda_j.
\]
For every \(\varphi\in L^2(\mathbb T;X)\),
Lemma~\ref{lem:type2-randomized-coefficients}, applied with
exponent \(2\), gives
\begin{align*}
 \left|
  \int_{\mathbb T}
  \langle g_\lambda(t),\varphi(t)\rangle\,dt
 \right|
 &=
 \left|
  \sum_{j\in J}
  \langle\lambda_j,\widehat\varphi(j)\rangle
 \right|
\le
 \|\lambda\|_*
 \left(
  \mathbb E_\varepsilon
  \left\|
   \sum_{j\in J}\varepsilon_j\widehat\varphi(j)
  \right\|_X^2
 \right)^{1/2}
 \\
 &\lesssim
 t_{2,X}\|\lambda\|_*
 \|\varphi\|_{L^2(\mathbb T;X)}.
\end{align*}
By \(L^2\)-duality, it follows that
\begin{equation}\label{eq:hty-refined-dual-estimate}
 \|g_\lambda\|_{L^2(\mathbb T;X^*)}
 \lesssim
t_{2,X}\|\lambda\|_*.
\end{equation}

Choose the de la Vall\'ee Poussin multipliers \(m_j\)
according to the common length of intervals as in \cite[Section~3]{HTY}. 
Using the exact duality formula above and
\eqref{eq:hty-refined-dual-estimate} in the kernel argument
of \cite[Section~3]{HTY}, we obtain
\[
 \left\|
  \sum_{j\in J}\varepsilon_jT_{m_j}^{\mathbb T}f
 \right\|_{L^p(\mathbb T;\operatorname{Rad}(X))}
 \lesssim_p
t_{2,X}\|f\|_{L^p(\mathbb T;X)}.
\]
We omit the details since they are the same as in \cite{HTY}.
% For \(p>2\), this follows from the boundedness of the
% Hardy--Littlewood maximal operator on \(L^{p/2}(\mathbb T)\).
% For \(p=2\), one retains the intermediate averaging estimates
% before passing to the maximal function and uses the
% \(L^1\)-boundedness of the normalized averaging operators.
The Kahane--Khintchine inequalities therefore yield
\[
 \left(
  \mathbb E_\varepsilon
  \left\|
   \sum_{j\in J}\varepsilon_jT_{m_j}^{\mathbb T}f
  \right\|_{L^p(\mathbb T;X)}^p
 \right)^{1/p}
 \lesssim_p
 t_{2,X}\|f\|_{L^p(\mathbb T;X)}.
\]

Finally, applying
\cite[Lemma~4.3]{CH2026} (a quantitative version of \cite[Lemma 3.1]{HTY}) to
\(T_{m_j}^{\mathbb T}f\), we obtain
\begin{align*}
 \left(
  \mathbb E_\varepsilon
  \left\|
   \sum_{j\in J}\varepsilon_jP_{I_j}^{\mathbb T}f
  \right\|_{L^p(\mathbb T;X)}^p
 \right)^{1/p}
 &\lesssim_p
 \beta_{p,X}^2
 \left(
  \mathbb E_\varepsilon
  \left\|
   \sum_{j\in J}\varepsilon_jT_{m_j}^{\mathbb T}f
  \right\|_{L^p(\mathbb T;X)}^p
 \right)^{1/p}
 \\
 &\lesssim_p
t_{2,X}\beta_{p,X}^2
 \|f\|_{L^p(\mathbb T;X)}.
\end{align*}
This proves \eqref{eq:torus-equal-lpr-quantitative}.
\end{proof}

% The second one is a standard result for vector-valued bounded-variation multipliers on the torus. We give a short proof for completeness.

% \begin{lemma}\label{Mar}
% Given
% \(b:\mathbb Z\to\mathbb C\) supported in $R_m$ with bounded vartiation $$\operatorname{Var}(b):=\sum_{n\in\mathbb Z}|b(n+1)-b(n)|<\infty,$$ for any  \(f\in L^p(\mathbb T;X)\), we have
% \begin{equation}\label{eq:periodic-marcinkiewicz-quantitative}
%  \|T_b^{\mathbb T}f\|_{L^p(\mathbb T;X)}
%  \lesssim
%  \operatorname{Var}(b)
%  \|f\|_{L^p(\mathbb T;X)}.
% \end{equation}
% \end{lemma}
% \begin{proof}
    
% \end{proof}

\begin{proof}[Proof of Lemma \ref{lem:two-parameter-cyclic-equal}]

\medskip

\noindent
\textit{Step 1: Reduction from \(\mathbb Z_m\) to \(\mathbb Z\).}

We first reduce arbitrary cyclic intervals on \(\mathbb Z_m\) to intervals in
\(\mathbb Z\) contained in the centered residue system \(R_m\).
For \(a\in\mathbb Z_m\), define the pointwise multiplier
\[
(\operatorname{M}_a h)(x)
:=
e^{2\pi iax/m}h(x),
\qquad x\in\mathbb Z_m.
\]
Fix \(\rho\). We may assume that all intervals \(J_{\rho,j}\) are proper
subsets of \(\mathbb Z_m\). Indeed, if one of them equals
\(\mathbb Z_m\), then the equal-cardinality and pairwise-disjointness
assumptions imply that the corresponding family consists of the whole
group, and the desired estimate is immediate from
\(P_{\mathbb Z_m}^{\mathbb Z_m}=\operatorname{Id}\).

Let $(a\mid a+1\bmod m)$ be the open arc defined above. Choose \(a\in\mathbb Z_m\) such that
\[
(a\mid a+1\bmod m)\cap J_{\rho,j}^\circ=\varnothing,
\qquad\text{for every }j.
\]
 Such a choice exists. Indeed, if
\(
\bigcup_jJ_{\rho,j}\neq\mathbb Z_m,
\)
we choose the cut in a gap of the complement. If \(
\bigcup_jJ_{\rho,j}=\mathbb Z_m,
\), then, since they are pairwise disjoint, we may choose \(a\in\mathbb Z_m\) and distinct indices
\(j_-\) and \(j_+\) such that
\(
 a\in J_{\rho,j_-}, a+1\in J_{\rho,j_+}.
\)
Then the arc determined by $a$ is a valid place for the cut. These two
configurations are illustrated in Figure~\ref{fig:cyclic-cut-two-cases}.

\begin{figure}[htbp]
    \centering
    \includegraphics[width=0.7\textwidth]
    {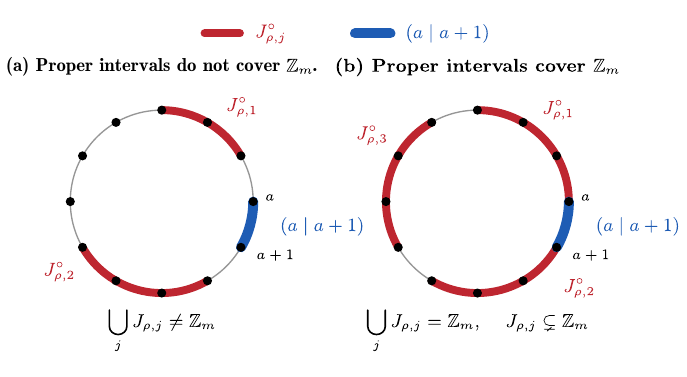}
    \caption{The two possible configurations of the family
    \((J_{\rho,j})_j\).}
    \label{fig:cyclic-cut-two-cases}
\end{figure}

Once such a point \(a\) has been chosen, set
\[
a_\rho
\equiv
\left\lceil\frac m2\right\rceil-1-a
\pmod m,
\qquad
a_\rho\in\mathbb Z_m.
\]
Then
\[
\rho_m(a+a_\rho)
=
\left\lceil\frac m2\right\rceil-1,
\qquad
\rho_m(a+1+a_\rho)
=
-\left\lfloor\frac m2\right\rfloor.
\]
Hence
\[
\widetilde J_{\rho,j}
:=
\rho_m(J_{\rho,j}+a_\rho)
\subset R_m
\]
are intervals in \(\mathbb Z\) for any \(j\). In particular,
\[
\widetilde J_{\rho,j}
=
\{b_{\rho,j},b_{\rho,j}+1,\ldots,b_{\rho,j}+L_{\rho,j}\}
\]
for suitable \(b_{\rho,j}\in\mathbb Z\) and
\(L_{\rho,j}\in\mathbb N\). The family
\((\widetilde J_{\rho,j})_j\) remains pairwise disjoint and has the same
cardinalities as the original cyclic family.
With our Fourier convention,
\[
\widehat{\operatorname{M}_{a_\rho}h}(q)
=
\widehat h(q-a_\rho),
\qquad q\in\mathbb Z_m.
\]
Therefore, by the definition of the centered cyclic Fourier projection in
\eqref{centered pro},
\begin{equation}\label{eq:cyclic-arc-modulation}
P_{m,\widetilde J_{\rho,j}}\operatorname{M}_{a_\rho}
=
\operatorname{M}_{a_\rho}P_{J_{\rho,j}}^{\mathbb Z_m}.
\end{equation}

We next establish the two randomized comparisons that will be used below.
For fixed \((\varepsilon,x)\), set
\[
F_\rho(\varepsilon,x)
:=
\sum_j\varepsilon_{\rho,j}
P_{J_{\rho,j}}^{\mathbb Z_m}h_\rho(x),
\qquad
\lambda_\rho(x):=e^{2\pi ia_\rho x/m}.
\]
Since \(|\lambda_\rho(x)|=1\), the complex contraction principle gives
\[
\left(
\mathbb E_\delta
\left\|
\sum_\rho\delta_\rho F_\rho(\varepsilon,x)
\right\|_X^p
\right)^{1/p}
\lesssim
\left(
\mathbb E_\delta
\left\|
\sum_\rho\delta_\rho
\lambda_\rho(x)F_\rho(\varepsilon,x)
\right\|_X^p
\right)^{1/p}.
\]
Taking \(p\)-th powers, integrating in \(\varepsilon\) and \(x\),
and then taking \(p\)-th roots and using
\eqref{eq:cyclic-arc-modulation}, we obtain
\begin{align}
&\left(
\mathbb E_{\varepsilon,\delta}\left\|
\sum_{\rho,j}\delta_\rho\varepsilon_{\rho,j}
P_{J_{\rho,j}}^{\mathbb Z_m}h_\rho
\right\|_{L^p(\mathbb Z_m;X)}^p
\right)^{1/p}\nonumber\\
&\qquad\lesssim
\left(
\mathbb E_{\varepsilon,\delta}\left\|
\sum_{\rho,j}\delta_\rho\varepsilon_{\rho,j}
P_{m,\widetilde J_{\rho,j}}
\operatorname{M}_{a_\rho}h_\rho
\right\|_{L^p(\mathbb Z_m;X)}^p
\right)^{1/p}.
\label{eq:reduce-cyclic-arcs-to-centered}
\end{align}
Similarly, for every fixed \(x\in\mathbb Z_m\), the complex contraction
principle gives
\[
\left(
\mathbb E_\delta
\left\|
\sum_\rho\delta_\rho\lambda_\rho(x)h_\rho(x)
\right\|_X^p
\right)^{1/p}
\lesssim
\left(
\mathbb E_\delta
\left\|
\sum_\rho\delta_\rho h_\rho(x)
\right\|_X^p
\right)^{1/p}.
\]
Taking \(p\)-th powers, integrating in \(x\), and then taking
\(p\)-th roots, we obtain
\begin{equation}\label{eq:remove-cyclic-row-modulations}
\left(
\mathbb E_\delta\left\|
\sum_\rho\delta_\rho
\operatorname{M}_{a_\rho}h_\rho
\right\|_{L^p(\mathbb Z_m;X)}^p
\right)^{1/p}
\lesssim
\left(
\mathbb E_\delta\left\|
\sum_\rho\delta_\rho h_\rho
\right\|_{L^p(\mathbb Z_m;X)}^p
\right)^{1/p}.
\end{equation}
It therefore suffices to prove
\begin{align}\label{reduction}
&\left(
\mathbb E_{\varepsilon,\delta}\left\|
\sum_{\rho,j}\delta_\rho\varepsilon_{\rho,j}
P_{m,\widetilde J_{\rho,j}}
\operatorname{M}_{a_\rho}h_\rho
\right\|_{L^p(\mathbb Z_m;X)}^p
\right)^{1/p}\\
&\qquad\lesssim_p t_{2,X}\beta_{p,X}^2
\left(
\mathbb E_{\delta}\left\|
\sum_\rho\delta_\rho
\operatorname{M}_{a_\rho}h_\rho
\right\|_{L^p(\mathbb Z_m;X)}^p
\right)^{1/p}.
\end{align}

\medskip
\noindent
\textit{Step 2: The corresponding estimate on \(\mathbb T\).}

Let \((g_\rho)_\rho\) be a finite family of \(X\)-valued trigonometric
polynomials on \(\mathbb T\). The general case follows by density. Choose
a finite interval \(K_\rho\subset\mathbb Z\) containing both
\[
\operatorname{supp}\widehat g_\rho
\quad\text{and}\quad
\bigcup_j\widetilde J_{\rho,j},
\]
and choose integers \(N_\rho\) such that the intervals
\((N_\rho+K_\rho)_\rho\) are pairwise disjoint. Define
\[
G_\delta(t)
:=
\sum_\rho\delta_\rho e^{2\pi iN_\rho t}g_\rho(t).
\]
The choice of the intervals \(K_\rho\) ensures that the intervals
\(N_\rho+\widetilde J_{\rho,j}\) are pairwise disjoint over all
\((\rho,j)\), and that
\(\widehat g_\sigma\) vanishes on
\(-N_\sigma+N_\rho+\widetilde J_{\rho,j}\) whenever \(\rho\ne\sigma\).
Consequently,
\begin{equation}\label{eq:translated-row-projection}
P_{N_\rho+\widetilde J_{\rho,j}}^{\mathbb T}G_\delta
=
\delta_\rho e^{2\pi iN_\rho t}
P_{\widetilde J_{\rho,j}}^{\mathbb T}g_\rho.
\end{equation}

For each fixed realization of \((\delta_\rho)_\rho\), we may apply
 Lemma \ref{Hytonen}, since
\((N_\rho+\widetilde J_{\rho,j})_{\rho,j}\) is a pairwise disjoint family
of intervals in \(\mathbb Z\) with equal cardinality. Hence, by the
\(\operatorname{LPR}_{p,=}^{\mathbb T}\) property of \(X\), we have
\begin{align*}
\left(
\mathbb E_{\varepsilon}\left\|
\sum_{\rho,j}\varepsilon_{\rho,j}
P_{N_\rho+\widetilde J_{\rho,j}}^{\mathbb T}G_\delta
\right\|_{L^p(\mathbb T;X)}^p
\right)^{1/p}
\lesssim_{p}t_{2,X}\beta_{p,X}^2
\|G_\delta\|_{L^p(\mathbb T;X)}.
\end{align*}
Taking the \(L^p\)-norm of \(\delta\) and using
\eqref{eq:translated-row-projection}, this becomes
\begin{align}
&\left(
\mathbb E_{\varepsilon,\delta}\left\|
\sum_{\rho,j}\delta_\rho\varepsilon_{\rho,j}
e^{2\pi iN_\rho t}
P_{\widetilde J_{\rho,j}}^{\mathbb T}g_\rho
\right\|_{L^p(\mathbb T;X)}^p
\right)^{1/p}\nonumber\\
&\qquad\lesssim_{p}t_{2,X}\beta_{p,X}^2
\left(
\mathbb E_{\delta}\left\|
\sum_\rho\delta_\rho e^{2\pi iN_\rho t}g_\rho
\right\|_{L^p(\mathbb T;X)}^p
\right)^{1/p}.
\label{eq:modulated-torus-two-parameter}
\end{align}

For fixed \((\varepsilon,t)\), apply the complex contraction principle in
the \(\delta_\rho\)-variables with coefficients
\(e^{-2\pi iN_\rho t}\) on the left-hand side. For fixed \(t\), apply the
same principle with coefficients \(e^{2\pi iN_\rho t}\) on the right-hand
side. We then obtain
\begin{align}
\left(
\mathbb E_{\varepsilon,\delta}\left\|
\sum_{\rho,j}\delta_\rho\varepsilon_{\rho,j}
P_{\widetilde J_{\rho,j}}^{\mathbb T}g_\rho
\right\|_{L^p(\mathbb T;X)}^p
\right)^{1/p}
\lesssim_{p}t_{2,X}\beta_{p,X}^2
\left(
\mathbb E_{\delta}\left\|
\sum_\rho\delta_\rho g_\rho
\right\|_{L^p(\mathbb T;X)}^p
\right)^{1/p}.
\label{eq:torus-two-parameter-final}
\end{align}

\medskip
\noindent
\textit{Step 3: Transfer from \(\mathbb T\) to \(\mathbb Z_m\) via a sampling factorization.}

For any \(m\ge2\) and interval $J\subset R_m$, define the finitely supported torus symbol
\begin{equation*}
b_{m,J}(n)
:=
\begin{cases}
\Phi(n/m),&n\in J,\\
0,&n\notin J,
\end{cases}
\qquad n\in\mathbb Z.
\end{equation*}
where
\[
\Phi(u)=\frac{\pi u}{\sin(\pi u)},
\qquad |u|\le\frac12,
\qquad
\Phi(0)=1.
\]
Fix a function $\eta\in C_c^\infty(\mathbb R)$ supported in $(-4/3,4/3)$
such that $\eta=\Phi$ on $[-1/2,1/2]$. Then
$ b_{m,J}(n)=\mathbf 1_J(n)\eta(n/m)$ for every $n\in\mathbb Z$,
which implies the identity
\begin{align}\label{smooth}
    T_{b_{m,J}}^{\mathbb T}
=
P_J^{\mathbb T}T_{\eta(\cdot/m)}^{\mathbb T}.
\end{align}
Here $T_{\eta(\cdot/m)}^{\mathbb T}$ denotes the Fourier multiplier on the torus
with symbol $\eta(\cdot/m)|_{\mathbb Z}$. The de Leeuw transference theorem therefore gives, for every $1\le p\le\infty$ and
every Banach space $Y$,
\begin{equation}\label{eq:cm-uniform-multiplier}
    \sup_{m\ge 2}\left\|T_{\eta(\cdot/m)}^{\mathbb T}\right\|_{L^p(\mathbb T;Y)\to L^p(\mathbb T;Y)}\lesssim 1.
\end{equation}

We now use the cyclic sampling construction from
\cite[Proposition~4.4 and Lemma~5.1]{CH2026}.
Let $D_m$ be the convolution operator defined in  \cite[Proposition~4.4]{CH2026}, and $\mathcal J_m,\mathcal U_m,\mathcal E_m$ be the contractive maps defined in \cite[Section 5]{CH2026}. In particular, \(\mathcal J_m:L^p(\mathbb Z_m;X)\to \mathcal E_mL^p(\mathbb T;X)\) is an
isometry and $\mathcal U_m:\mathcal E_mL^p(\mathbb T;X)\to L^p(\mathbb Z_m;X)$ is the inverse of $\mathcal J_m$. Hence for any Banach space $Y$ we have
\begin{equation}\label{eq:Am-uniform-bound}
    \sup_{m\ge2}
\|\mathcal J_mD_m\mathcal U_m\mathcal E_m\|_{L^p(\mathbb T;Y)\to L^p(\mathbb T;Y)}
\lesssim_{p}1.
\end{equation}
Using the sampling factorization from \cite[Lemma~5.1]{CH2026}, i.e.
$$\mathcal J_mP_{m,J}
=
\mathcal J_mD_m\mathcal U_m\mathcal E_m
T_{b_{m,J}}^{\mathbb T}\mathcal J_m,$$
and \eqref{smooth}, we obtain
\begin{equation}\label{eq:centered-projection-factorization}
\mathcal J_mP_{m,J}
=
\mathcal J_mD_m\mathcal U_m\mathcal E_mP_J^{\mathbb T}
T_{\eta(\cdot/m)}^{\mathbb T}\mathcal J_m,
\qquad J\subset R_m.
\end{equation}
Applying the isometric property of \(\mathcal J_m\), estimate
\eqref{eq:Am-uniform-bound} with
\(Y=L^p(\Omega_\delta\times\Omega_\varepsilon;X)\), and then
\eqref{eq:torus-two-parameter-final} with
\[
g_\rho
=
T_{\eta(\cdot/m)}^{\mathbb T}\mathcal J_m\operatorname{M}_{a_{\rho}}h_\rho,
\]
we obtain
\begin{align}
&
\left(
\mathbb E_{\varepsilon,\delta}\left\|
\sum_{\rho,j}\delta_\rho\varepsilon_{\rho,j}
P_{m,\widetilde J_{\rho,j}}\operatorname{M}_{a_{\rho}}h_\rho
\right\|_{L^p(\mathbb Z_m;X)}^p
\right)^{1/p}
\nonumber\\
&\overset{
\eqref{eq:centered-projection-factorization},
\,\eqref{eq:Am-uniform-bound}
}{\lesssim}
\left(
\mathbb E_{\varepsilon,\delta}\left\|
\sum_{\rho,j}\delta_\rho\varepsilon_{\rho,j}
P_{\widetilde J_{\rho,j}}^{\mathbb T}
T_{\eta(\cdot/m)}^{\mathbb T}\mathcal J_m\operatorname{M}_{a_{\rho}}h_\rho
\right\|_{L^p(\mathbb T;X)}^p
\right)^{1/p}
\nonumber\\
&\ \ \ \overset{
\eqref{eq:torus-two-parameter-final}
}{\lesssim_{p}} t_{2,X}\beta_{p,X}^2
\left(
\mathbb E_{\delta}\left\|
\sum_\rho\delta_\rho
T_{\eta(\cdot/m)}^{\mathbb T}\mathcal J_m\operatorname{M}_{a_{\rho}}h_\rho
\right\|_{L^p(\mathbb T;X)}^p
\right)^{1/p}
\nonumber\\
&\ \ \ \overset{
\eqref{eq:cm-uniform-multiplier}
}{\lesssim_{p}}t_{2,X}\beta_{p,X}^2
\left(
\mathbb E_{\delta}\left\|
\sum_\rho\delta_\rho\mathcal J_m\operatorname{M}_{a_{\rho}}h_\rho
\right\|_{L^p(\mathbb T;X)}^p
\right)^{1/p}\nonumber\\
&\ \ \ =t_{2,X}\beta_{p,X}^2
\left(
\mathbb E_{\delta}\left\|
\sum_\rho\delta_\rho \operatorname{M}_{a_{\rho}}h_\rho
\right\|_{L^p(\mathbb Z_m;X)}^p
\right)^{1/p},
\label{eq:centered-cyclic-two-parameter}
\end{align}
This proves \eqref{reduction} and hence
\eqref{eq:two-parameter-cyclic-equal}. 
\end{proof}

\subsection{An embedding of Vilenkin intervals into digital rectangular envelopes}\label{s4.4}

We now exploit the special geometry of the disjoint sets
\(I_s^\oplus:=\zeta_s\oplus[0,L)\). We use the following notation: For \(L\ge1\), let \(k\ge0\) be the unique index such that
\(
M_k\le L<M_{k+1}.
\)
Write
\begin{equation*}
L=qM_k+r,
\qquad
1\le q<m_k,
\qquad
0\le r<M_k,
\end{equation*}
and write
\begin{equation*}
\zeta_s
=
\kappa_s\oplus c_sM_k\oplus h_s,
\qquad
h_s\in\Gamma_k,
\quad
c_s\in\mathbb Z_{m_k},
\end{equation*}
where the first \(k+1\) digits of \(\kappa_s\) vanish. Define the cyclic
intervals
\begin{equation*}
J_s:=\left(c_s+\{0,\ldots,q-1\}\right)\bmod{m_k},
\qquad
\widetilde J_s:=\left(c_s+\{0,\ldots,q\}\right)\bmod{m_k},
\end{equation*}
and define the digital rectangular envelope of $I_s^\oplus$ by
\begin{equation}\label{decomB}
B_s
:=
\kappa_s\oplus
\bigl(\widetilde J_sM_k\oplus\Gamma_k\bigr).
\end{equation}
Here, for any $J\subset\mathbb Z_{m_k}$, we use the notation
\begin{equation*}
 J M_k\oplus\Gamma_k
 :=
 \bigcup_{a\in J}\bigl(aM_k\oplus\Gamma_k\bigr).
\end{equation*}

\begin{lemma}\label{lem:digital-rectangular-envelopes}
Let \(I_s^\oplus:=\zeta_s\oplus[0,L),s\in\mathcal S\) be a family of disjoint Vilenkin intervals. With the notation above,  we have \(I_s^\oplus\subset B_s\). Moreover, the index set \(\mathcal S\) can be
partitioned into three classes
\[
\mathcal S
=
\mathcal S^{(1)}\dunion\mathcal S^{(2)}\dunion\mathcal S^{(3)}
\]
such that, for every \(\nu\in\{1,2,3\}\) and every fixed \(\kappa\), the
intervals
\[
\{\widetilde J_s:s\in\mathcal S^{(\nu)},\ \kappa_s=\kappa\}
\]
are pairwise disjoint.
\end{lemma}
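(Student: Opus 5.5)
The plan is to prove the two assertions separately: first the inclusion \(I_s^\oplus\subset B_s\) by a digit computation, then the three-class partition by a colouring argument for the cyclic intervals \(\widetilde J_s\). The colouring rests on the observation that \(I_s^\oplus\) contains full \(\Gamma_k\)-blocks indexed by \(J_s\).

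\emph{Inclusion.} Since \(L=qM_k+r<M_{k+1}\), every \(n\in[0,L)\) has the form \(n=aM_k+t\) with \(t\in\Gamma_k\). Either \(0\le a\le q-1\) and \(t\) is arbitrary, or \(a=q\) and \(t<r\). All digits of \(n\) above position \(k\) vanish. Since \(\oplus\) acts digitwise, we get
\[
\zeta_s\oplus n=\kappa_s\oplus\bigl((c_s+a)\bmod m_k\bigr)M_k\oplus(h_s\oplus t),
\]
with \(h_s\oplus t\in\Gamma_k\) and \((c_s+a)\bmod m_k\in\widetilde J_s\). Hence \(\zeta_s\oplus n\in B_s\). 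The same computation gives a second fact. For each \(0\le a\le q-1\), as \(t\) runs over all of \(\Gamma_k\), so does \(h_s\oplus t\). Therefore
\[
\kappa_s\oplus\bigl(J_sM_k\oplus\Gamma_k\bigr)\subset I_s^\oplus .
\]

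\emph{Disjointness of the \(J_s\).} Fix \(\kappa\) and consider \(\mathcal S_\kappa:=\{s:\kappa_s=\kappa\}\). The blocks \(\kappa\oplus(aM_k\oplus\Gamma_k)\), \(a\in\mathbb Z_{m_k}\), are pairwise disjoint. Hence the disjointness of the \(I_s^\oplus\) together with the containment above shows that the intervals \(J_s\), \(s\in\mathcal S_\kappa\), are pairwise disjoint in \(\mathbb Z_{m_k}\). Each of them has cardinality \(q\ge1\).

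\emph{Colouring.} Now \(\widetilde J_s=J_s\cup\{c_s+q\}\). I will show that \(\widetilde J_s\cap\widetilde J_{s'}\neq\varnothing\), for \(s\ne s'\) in \(\mathcal S_\kappa\), forces one of the two to be the cyclic successor of the other.
\begin{itemize}
\item Since \(J_s\cap J_{s'}=\varnothing\), an intersection must involve an extra point.
\item The two extra points cannot coincide: \(c_s+q=c_{s'}+q\) would give \(c_s=c_{s'}\), so \(J_s=J_{s'}\).
\item The extra point \(c_s+q\) can lie in \(J_{s'}\) only if \(c_{s'}=c_s+q\), i.e.\ \(J_{s'}\) starts immediately after \(J_s\) ends. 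This uses that the \(J\)'s are disjoint intervals of the same length \(q\).
\end{itemize}
Order \(\mathcal S_\kappa\) cyclically by the position of \(c_s\) in \(\mathbb Z_{m_k}\). Then the conflict graph on \(\mathcal S_\kappa\) is a subgraph of a cycle, or of a path, or a single edge. The degenerate case \(q=1\), \(m_k=2\) has two intervals whose enlargements are both all of \(\mathbb Z_2\); this case is covered by the single edge.

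Every such graph is properly \(3\)-colourable. Three colours are needed only when it is an odd cycle. Choose such a colouring \(\mathcal S_\kappa=\mathcal S_\kappa^{(1)}\dunion\mathcal S_\kappa^{(2)}\dunion\mathcal S_\kappa^{(3)}\) for each \(\kappa\) independently, and set \(\mathcal S^{(\nu)}:=\bigcup_\kappa\mathcal S_\kappa^{(\nu)}\). By construction, for each \(\nu\) and \(\kappa\), the intervals \(\widetilde J_s\) with \(s\in\mathcal S^{(\nu)}\) and \(\kappa_s=\kappa\) are pairwise disjoint.

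The main obstacle I expect is the careful verification of the conflict structure, namely that the only possible overlaps come from cyclically adjacent intervals. This needs the equal-length and disjointness properties of the \(J_s\) and the wrap-around cases, including \(q=m_k-1\). Once it is in place, the partition is elementary graph colouring.
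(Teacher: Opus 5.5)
Your proposal is correct and follows essentially the same route as the paper. You prove the inclusion by splitting \([0,L)\) into \(q\) full \(\Gamma_k\)-blocks plus one partial block. You get disjointness of the \(J_s\) from the full blocks \(\kappa_s\oplus(J_sM_k\oplus\Gamma_k)\subset I_s^\oplus\). You then \(3\)-colour, for each \(\kappa\), a conflict graph in which each vertex is joined only to its cyclic predecessor and successor. Your explicit case analysis showing that \(\widetilde J_s\cap\widetilde J_{s'}\neq\varnothing\) forces \(c_{s'}=c_s+q\) or \(c_s=c_{s'}+q\) spells out a step the paper only asserts.
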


\begin{proof}
The mixed-radix expansion gives the disjoint decomposition
\begin{equation*}
[0,L)
=
\left(
\bigdunion_{a=0}^{q-1}
(aM_k\oplus\Gamma_k)
\right)
\dunion
\bigl(qM_k\oplus[0,r)\bigr)
=:I_1\dunion I_2.
\end{equation*}
Since \(h_s\oplus\Gamma_k=\Gamma_k\),
\[
\zeta_s\oplus I_1
=
\kappa_s\oplus(J_sM_k\oplus\Gamma_k).
\]
On the other hand, since
\(h_s\oplus[0,r)\subset\Gamma_k\),
\[
\zeta_s\oplus I_2
\subset
\kappa_s\oplus
\bigl(\{((c_s+q)\bmod m_k)M_k\}\oplus\Gamma_k\bigr).
\]
This proves \(I_s^\oplus\subset B_s\).

For each \(\kappa\), set
\[
\mathcal S_\kappa
:=
\{s:\kappa_s=\kappa\}.
\]
Order the intervals \((J_s)_{s\in\mathcal S_\kappa}\) according to the
cyclic order on \(\mathbb Z_{m_k}\),
\[
0\longrightarrow1\longrightarrow\cdots\longrightarrow m_k-1
\longrightarrow0.
\]
For each \(s\), let \(s^-\) and \(s^+\) denote its predecessor and
successor in this ordering.
The intervals \((J_s)_{s\in\mathcal S_\kappa}\) are pairwise disjoint,
since the corresponding rectangles
\[
\kappa\oplus(J_sM_k\oplus\Gamma_k)
\subset I_s^\oplus
\]
are pairwise disjoint. Note that
\[
\widetilde J_s
=
J_s\cup\{(c_s+q)\bmod m_k\},
\]
hence the pairwise disjointness and equal cardinality of
\((J_s)_{s\in\mathcal S_\kappa}\) imply that
\[
\left\{
t\in\mathcal S_\kappa\setminus\{s\}:
\widetilde J_s\cap\widetilde J_t\neq\varnothing
\right\}
\subseteq
\{s^-,s^+\}.
\]
In particular,
\[
\sup_{s\in\mathcal S_\kappa}
\#\left\{
t\in\mathcal S_\kappa\setminus\{s\}:
\widetilde J_s\cap\widetilde J_t\neq\varnothing
\right\}
\le2.
\]
Therefore, there exists a map
\(
c_\kappa:\mathcal S_\kappa\longrightarrow\{1,2,3\}
\)
such that
\[
s\neq t,\quad
c_\kappa(s)=c_\kappa(t)
\quad\Longrightarrow\quad
\widetilde J_s\cap\widetilde J_t=\varnothing.
\]
Define
\[
\mathcal S_\kappa^{(a)}
:=
\{s:\kappa_s=\kappa,\ c_\kappa(s)=a\},
\qquad a\in\{1,2,3\}.
\]
Then
\[
\mathcal S_\kappa
=
\mathcal S_\kappa^{(1)}
\,\dot\cup\,
\mathcal S_\kappa^{(2)}
\,\dot\cup\,
\mathcal S_\kappa^{(3)},
\]
and, for every \(\kappa\) and \(a\in\{1,2,3\}\),
\[
\bigl(\widetilde J_s\bigr)_{
s\in\mathcal S_\kappa^{(a)}}
\ \text{is pairwise disjoint}.
\]
Finally, define
\[
\mathcal S^{(\nu)}
:=
\bigcup_{\kappa}\mathcal S_\kappa^{(\nu)},
\qquad
\nu=1,2,3.
\]
This completes the proof.
\end{proof}

With Lemma \ref{lem:digital-rectangular-envelopes}, we prove the LPR inequality for the rectangular envelopes.
\begin{lemma}\label{lem:rectangular-envelope-estimate}
Let \(X\) be a $\operatorname{UMD}$ Banach space with type \(2\), and let
\(2\le p<\infty\). Under the hypotheses and notation of
Lemma~\ref{lem:digital-rectangular-envelopes},
\begin{equation}\label{eq:rectangular-envelope-estimate}
\left(\mathbb E_{\varepsilon}
\left\|
\sum_s\varepsilon_sP_{B_s}f
\right\|_{L^p(\Gm;X)}^p
\right)^{1/p}
\lesssim_{p}t_{2,X}^2\beta_{p,X}^2
\|f\|_{L^p(\Gm;X)}.
\end{equation}
The implicit constant is independent of \(\mathbf m\), \(L\), and the
family.
\end{lemma}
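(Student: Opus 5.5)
First, by the triangle inequality in \(L^p(\Omega_\varepsilon\times\Gm;X)\), it suffices to prove \eqref{eq:rectangular-envelope-estimate} separately for each of the three subfamilies \(\mathcal S^{(\nu)}\) from Lemma~\ref{lem:digital-rectangular-envelopes}. This loses only a factor \(3\). So fix \(\nu\) and write \(\mathcal S=\mathcal S^{(\nu)}\). For each \(\kappa\) in the finite set \(\Lambda\) of values of \(\kappa_s\), the cyclic intervals \((\widetilde J_s)_{s\in\mathcal S_\kappa}\) are then pairwise disjoint subsets of \(\mathbb Z_{m_k}\), all of cardinality \(q+1\). Every \(\kappa\in\Lambda\) has vanishing first \(k+1\) digits, so the sets \(D_\kappa:=\kappa\oplus\Gamma_{k+1}\) are distinct cosets, and \(B_s\subset D_{\kappa_s}\). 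Hence
\[
P_{B_s}f=P_{B_s}P_{D_{\kappa_s}}f .
\]

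Next I would decouple the two randomizations. Since \((\varepsilon_s)_s\) is indexed by pairs \((\kappa,s)\), with \(s\in\mathcal S_\kappa\), it has the same distribution as \((\delta_\kappa\varepsilon_s)_s\) for an independent Rademacher family \((\delta_\kappa)_\kappa\). So the left-hand side of \eqref{eq:rectangular-envelope-estimate} equals
\[
\Bigl(\mathbb E_{\varepsilon,\delta}\Bigl\|\sum_{\kappa}\delta_\kappa\sum_{s\in\mathcal S_\kappa}\varepsilon_s P_{B_s}P_{D_\kappa}f\Bigr\|_{L^p(\Gm;X)}^p\Bigr)^{1/p}.
\]
Now use the product structure \(\Gm=G_{<k}\times\mathbb Z_{m_k}\times G_{>k}\), with variables \((u,y,v)\). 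By the modulation identity \eqref{eq:modulation-identity} and the argument of Lemma~\ref{lem:oplus-coset-randomization}, we can write
\[
P_{D_\kappa}f(u,y,v)=\psi_\kappa(v)\,h_\kappa(u,y),
\qquad
h_\kappa(u,y):=\int_{G_{>k}}f(u,y,w)\overline{\psi_\kappa(w)}\,dw .
\]
Moreover \(P_{B_s}\) acts on \(P_{D_\kappa}f\) only through the \(y\)-variable: it applies the cyclic projection \(P^{\mathbb Z_{m_k}}_{\widetilde J_s}\) in \(y\), because \(\widetilde J_sM_k\oplus\Gamma_k\) is a product set whose \(\Gamma_k\) factor is full. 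Thus
\[
P_{B_s}f(u,y,v)=\psi_\kappa(v)\,P^{\mathbb Z_{m_k}}_{\widetilde J_s}h_\kappa(u,\cdot)(y).
\]

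With this structure, fix \((u,v)\). I would remove the unimodular factors \(\psi_\kappa(v)\) by the complex contraction principle in the \(\delta\)-variables. Then I would apply Lemma~\ref{lem:two-parameter-cyclic-equal} on \(\mathbb Z_{m_k}\), with \(\rho=\kappa\), \(h_\rho=h_\kappa(u,\cdot)\) and the intervals \(J_{\rho,j}=\widetilde J_s\). Those intervals are disjoint for fixed \(\kappa\) and have common cardinality \(q+1\); if \(q+1=m_k\), each family is a single full interval and the estimate is trivial. This bounds the quantity by
\[
t_{2,X}\beta_{p,X}^2\Bigl(\mathbb E_\delta\Bigl\|\sum_\kappa\delta_\kappa h_\kappa(u,\cdot)\Bigr\|_{L^p(\mathbb Z_{m_k};X)}^p\Bigr)^{1/p}.
\]
Re-inserting \(\psi_\kappa(v)\) by the contraction principle again and integrating over \((u,v)\), we return to
\[
\Bigl(\mathbb E_\delta\bigl\|\textstyle\sum_\kappa\delta_\kappa P_{D_\kappa}f\bigr\|_{L^p(\Gm;X)}^p\Bigr)^{1/p}.
\]
Lemma~\ref{lem:oplus-coset-randomization}, applied with \(k+1\) in place of \(k\), bounds this by \(t_{2,X}\|f\|_{L^p}\). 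The total is \(t_{2,X}^2\beta_{p,X}^2\), with no dependence on \(\mathbf m\), \(L\), or the family.

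The main obstacle is making the product-structure identification rigorous. One has to check that \(P_{B_s}\), composed with the coset projection, really factors as \(\psi_\kappa(v)\) times a cyclic Fourier projection in the single coordinate \(y=x_k\), in the same normalization as the projection \(P^{\mathbb Z_{m_k}}_J\) used in Lemma~\ref{lem:two-parameter-cyclic-equal}. This includes the fact that on frequencies in \(\kappa\oplus(aM_k\oplus\Gamma_k)\) the character factors as \(\psi_\kappa(v)\,e^{2\pi i a y/m_k}\,\psi_{n}(u)\). It also requires the Fubini bookkeeping: Lemma~\ref{lem:two-parameter-cyclic-equal} is applied pointwise in \((u,v)\), while the \(u\)-dependent low-digit part passes through untouched. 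I would verify this by computing directly on characters \(\psi_{\kappa\oplus aM_k\oplus n}\), with \(n\in\Gamma_k\).
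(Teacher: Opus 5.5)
Your proposal is correct and follows essentially the same route as the paper. Your $h_\kappa$ is exactly the paper's $f_\kappa=\overline{\psi_\kappa}P_{D_\kappa}f$. The paper likewise decouples the signs via $(\delta_{\kappa_s}\eta_s)_s$, applies Lemma~\ref{lem:two-parameter-cyclic-equal} fibrewise in the $k$-th coordinate, and removes and reinserts $\psi_\kappa$ by the contraction principle. It then finishes with Lemma~\ref{lem:oplus-coset-randomization} at level $k+1$ and sums over the three classes.
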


\begin{proof}
Fix \(\nu\in\{1,2,3\}\). For every \(\kappa\) such that
\(\mathcal S_\kappa^{(\nu)}\neq\varnothing\), set
\[
D_\kappa:=\kappa\oplus\Gamma_{k+1},
\qquad
f_\kappa
:=
\overline{\psi_\kappa}P_{D_\kappa}f.
\]
Then
\(
\operatorname{supp}\widehat f_\kappa\subset\Gamma_{k+1}.
\)
For \(J\subset\mathbb Z_{m_k}\), write
\[
\Pi_{k,J}
:=
P_{JM_k\oplus\Gamma_k}.
\]
By the definition \eqref{decomB} of \(B_s\), if
\(s\in\mathcal S_\kappa^{(\nu)}\), then \(B_s\subset D_\kappa\). Hence,
by the modulation identity \eqref{eq:modulation-identity}, we have
\begin{equation}\label{eq:envelope-fibre-projection}
P_{B_s}f
=
\psi_\kappa\Pi_{k,\widetilde J_s}f_\kappa.
\end{equation}

Let \((\delta_\kappa)_\kappa\) and \((\eta_s)_s\) be independent Rademacher
families. The products
\((\delta_{\kappa_s}\eta_s)_s\) form an independent Rademacher family.
Since
\(\operatorname{supp}\widehat f_\kappa\subset\Gamma_{k+1}\), the operator
\(\Pi_{k,\widetilde J_s}\) acts only on the \(k\)-th digit. More precisely,
for
\[
(u,x_k,v)\in
\Gm
=
G_{<k}\times\mathbb Z_{m_k}\times G_{>k},
\]
we have
\[
\Pi_{k,\widetilde J_s}f_\kappa(u,x_k,v)
=
\left(
P_{\widetilde J_s}^{\mathbb Z_{m_k}}
f_\kappa(u,\cdot,v)
\right)(x_k).
\]
Therefore, for every fixed \((u,v)\), applying
\eqref{eq:two-parameter-cyclic-equal} fibrewise in the \(k\)-th coordinate,
with \(\rho=\kappa\) and
\(\{\widetilde J_s:s\in\mathcal S_\kappa^{(\nu)}\}\), gives
\begin{align*}
&
\left(\mathbb E_{\delta,\eta}
\left\|
\sum_\kappa\delta_\kappa
\sum_{s\in\mathcal S_\kappa^{(\nu)}}
\eta_s\Pi_{k,\widetilde J_s}
f_\kappa(u,\cdot,v)
\right\|_{L^p(\mathbb Z_{m_k};X)}^p
\right)^{1/p}
\\
&\qquad\lesssim_{p}t_{2,X}\beta_{p,X}^2
\left(\mathbb E_{\delta}
\left\|
\sum_\kappa\delta_\kappa
f_\kappa(u,\cdot,v)
\right\|_{L^p(\mathbb Z_{m_k};X)}^p
\right)^{1/p}.
\end{align*}
Integrating in \((u,v)\), we obtain
\begin{align}\label{fibre}
&
\left(\mathbb E_{\delta,\eta}
\left\|
\sum_\kappa\delta_\kappa
\sum_{s\in\mathcal S_\kappa^{(\nu)}}
\eta_s\Pi_{k,\widetilde J_s}f_\kappa
\right\|_{L^p(\Gm;X)}^p
\right)^{1/p}
\nonumber\\
&\qquad\lesssim_{p}t_{2,X}\beta_{p,X}^2
\left(\mathbb E_{\delta}
\left\|
\sum_\kappa\delta_\kappa f_\kappa
\right\|_{L^p(\Gm;X)}^p
\right)^{1/p}.
\end{align}
Combining \eqref{fibre} with
\eqref{eq:envelope-fibre-projection}, the complex contraction principle,
and \eqref{eq:oplus-coset-randomization}, we obtain
\begin{align*}
\left(\mathbb E_{\varepsilon}
\left\|
\sum_{s\in\mathcal S^{(\nu)}}
\varepsilon_sP_{B_s}f
\right\|_{L^p(\Gm;X)}^p
\right)^{1/p}
&\ =
\left(\mathbb E_{\delta,\eta}
\left\|
\sum_\kappa\delta_\kappa\psi_\kappa
\sum_{s\in\mathcal S_\kappa^{(\nu)}}
\eta_s\Pi_{k,\widetilde J_s}f_\kappa
\right\|_{L^p(\Gm;X)}^p
\right)^{1/p}
\\
&\ \lesssim_p
\left(\mathbb E_{\delta,\eta}
\left\|
\sum_\kappa\delta_\kappa
\sum_{s\in\mathcal S_\kappa^{(\nu)}}
\eta_s\Pi_{k,\widetilde J_s}f_\kappa
\right\|_{L^p(\Gm;X)}^p
\right)^{1/p}
\\
&\overset{\eqref{fibre}}{\lesssim_{p}}t_{2,X}\beta_{p,X}^2
\left(\mathbb E_{\delta}
\left\|
\sum_\kappa\delta_\kappa f_\kappa
\right\|_{L^p(\Gm;X)}^p
\right)^{1/p}
\\
&\ \lesssim_pt_{2,X}\beta_{p,X}^2
\left(\mathbb E_{\delta}
\left\|
\sum_\kappa\delta_\kappa P_{D_\kappa}f
\right\|_{L^p(\Gm;X)}^p
\right)^{1/p}
\\
&\,\overset{\eqref{eq:oplus-coset-randomization}}{\lesssim_{p}}t_{2,X}^2\beta_{p,X}^2
\|f\|_{L^p(\Gm;X)}.
\end{align*}
Summing over \(\nu=1,2,3\) proves
\eqref{eq:rectangular-envelope-estimate}.
\end{proof}

\subsection{Reduction to the uniform partial-sum estimate}
For a Banach space $X$, put
\begin{equation*}
 \mathfrak S^p(\Gm;X)
 :=
 \sup_{n\ge1}
 \left\|
  P_{[0,n)}
 \right\|_{
  L^p(\Gm;X)
  \to
  L^p(\Gm;X)
 }.
\end{equation*}
For UMD spaces $X$ with type~$2$, the following proposition reduces the
$\operatorname{LPR}_{p,=}^\oplus$ estimate to the uniform boundedness
$$\mathfrak S^p(\Gm;X)<\infty.$$

\begin{proposition}\label{prop:oplus-reduction}
Let \(2\le p<\infty\), and let \(X\) be a UMD Banach space with type \(2\).
If
\begin{equation}\label{eq:rad-partial-sums-assumption}
\mathfrak S^p(\Gm;X)<\infty,
\end{equation}
then \(X\) has the \(\operatorname{LPR}_{p,=}^{\oplus}\) property. More precisely, for
any pairwise disjoint family
\(I_s^\oplus:=\zeta_s\oplus[0,L)\), we have
\begin{equation*}
\left(\mathbb E_{\varepsilon}
\left\|
\sum_s\varepsilon_s
P_{I_s^\oplus}f
\right\|_{L^p(\Gm;X)}^p
\right)^{1/p}
\lesssim_{p}t_{2,X}^2\beta_{p,X}^2
\mathfrak S^p(\Gm;X)
\|f\|_{L^p(\Gm;X)}.
\end{equation*}
\end{proposition}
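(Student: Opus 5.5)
The plan is to compare each $P_{I_s^\oplus}$ with the projection onto its digital rectangular envelope $B_s$ from Lemma~\ref{lem:digital-rectangular-envelopes}. Since $I_s^\oplus\subset B_s$, we have $P_{I_s^\oplus}=P_{I_s^\oplus}P_{B_s}$. By the modulation identity \eqref{eq:modulation-identity},
\[
P_{I_s^\oplus}g=\psi_{\zeta_s}P_{[0,L)}(\overline{\psi_{\zeta_s}}g).
\]
Hence each $P_{I_s^\oplus}$ is a modulated partial-sum operator, with norm at most $\mathfrak S^p(\Gm;X)$ on $L^p(\Gm;X)$. Applying this to $g=P_{B_s}f$ gives
\[
\sum_s\varepsilon_sP_{I_s^\oplus}f=\sum_s\varepsilon_s\,\psi_{\zeta_s}P_{[0,L)}\bigl(\overline{\psi_{\zeta_s}}P_{B_s}f\bigr).
\]
The task is to remove the operators $\psi_{\zeta_s}P_{[0,L)}\overline{\psi_{\zeta_s}}$ at the cost of a factor $\mathfrak S^p$. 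The remaining expression $\sum_s\varepsilon_sP_{B_s}f$ is then controlled by Lemma~\ref{lem:rectangular-envelope-estimate}, which gives the bound $t_{2,X}^2\beta_{p,X}^2$.

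The difficulty is that the operators differ from one $s$ to the next, because the modulations $\psi_{\zeta_s}$ differ. A Rademacher sum $\sum_s\varepsilon_s T_sg_s$ is not in general bounded by $\sup_s\|T_s\|$ times the Rademacher sum of the $g_s$; in UMD spaces that would require $\mathcal R$-boundedness. I plan to avoid this with the following observation. Within the rectangle structure the modulation by $\psi_{\zeta_s}=\psi_{\kappa_s}\psi_{c_sM_k}\psi_{h_s}$ splits into a part that is constant across all $s$ and parts that act on separate coordinates.

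Concretely, write $\Gm=G_{<k}\times\mathbb Z_{m_k}\times G_{>k}$. The high part $\psi_{\kappa_s}$ is already factored out as in \eqref{eq:envelope-fibre-projection}, and $P_{[0,L)}$ commutes with multiplication by $\psi_{\kappa}$ on the relevant frequencies. So $\psi_{\kappa_s}$ can be handled by the complex contraction principle, as in the proof of Lemma~\ref{lem:rectangular-envelope-estimate}. What remains is a fibre problem: on functions with spectrum in $\Gamma_{k+1}$, $P_{[0,L)}$ equals $E_k$ on the digits $a<q$, plus the single block $qM_k\oplus[0,r)$. The first part involves no operator at all, since it is just a projection in the $k$-th digit. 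The second part is $\psi_{qM_k}P_{[0,r)}\overline{\psi_{qM_k}}$ composed with $\Delta_{k,q}$-type blocks.

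For the second part I would use the UMD property through the martingale structure. The operator $P_{[0,r)}$ acts only on $G_{<k}$-frequencies, and it is a single fixed operator $T=P_{[0,r)}$, the same for every $s$, after conjugation by the shifts $\psi_{h_s}$ and $\psi_{c_sM_k}$. Those shift characters are unimodular functions, which multiply pointwise. A fixed operator $T$ applied to each term does extend to Rademacher sums with norm $\|T\|$, namely as $T\otimes\mathrm{Id}$ on $L^p(\Omega;L^p(\Gm;X))$ by Fubini. However, the conjugating characters depend on $s$ and cannot be pulled out of $T$, so the contraction principle does not apply directly.

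This is the main obstacle. What I would try first is to replace the $s$-dependent modulation $\psi_{h_s}$, whose frequency is less than $M_k$, by a translation in the group variable. Concretely, $\overline{\psi_{h}}\,P_{[0,r)}\,\psi_{h}$ on $G_{<k}$ should be rewritten, via a lifting to the larger group $G_{<k}\times G_{<k}$ (a Fubini/averaging trick, which is a standard transference step), so that a single operator $P_{[0,r)}$ acts in an auxiliary variable, independent of $s$. If this works, the whole estimate becomes $\mathfrak S^p(\Gm;X)$ times the envelope estimate. Adding the pieces then gives the claimed bound $t_{2,X}^2\beta_{p,X}^2\,\mathfrak S^p(\Gm;X)$.
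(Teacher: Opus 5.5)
Your architecture matches the paper's: pass to the envelopes $B_s$, use $P_{I_s^\oplus}=P_{I_s^\oplus}P_{B_s}$ and the modulation identity, pay one factor $\mathfrak S^p(\Gm;X)$, and finish with Lemma~\ref{lem:rectangular-envelope-estimate}. The gap is that the argument never closes. The step you call the main obstacle is left open (``If this works\ldots''). Your proposed remedy is never carried out: it consists of a fibre splitting of $P_{[0,L)}$ into the digits $a<q$ and the block $qM_k\oplus[0,r)$, a UMD/martingale treatment of that block, and a lifting of $\overline{\psi_h}P_{[0,r)}\psi_h$ to $G_{<k}\times G_{<k}$. None of it is needed, and the diagnosis behind it is wrong. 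You never have to move the $s$-dependent characters through $P_{[0,L)}$. They sit on either side of a single fixed operator as pointwise multiplications, and the contraction principle applied at each fixed $x\in\Gm$ removes them.

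Concretely, set $g_s=P_{B_s}f$ and $F:=\sum_s\varepsilon_s\overline{\psi_{\zeta_s}}g_s$. For fixed $x$, let $v_s(x):=P_{[0,L)}\bigl(\overline{\psi_{\zeta_s}}g_s\bigr)(x)\in X$. These vectors do not depend on $\varepsilon$, so the complex contraction principle and the linearity of $P_{[0,L)}$ give
\[
\mathbb E_\varepsilon\Bigl\|\sum_s\varepsilon_s\psi_{\zeta_s}(x)v_s(x)\Bigr\|_X^p
\lesssim
\mathbb E_\varepsilon\Bigl\|\sum_s\varepsilon_sv_s(x)\Bigr\|_X^p
=
\mathbb E_\varepsilon\bigl\|\bigl(P_{[0,L)}F(\varepsilon,\cdot)\bigr)(x)\bigr\|_X^p .
\]
Integrate in $x$ and use $\|P_{[0,L)}F(\varepsilon,\cdot)\|_{L^p(\Gm;X)}\le\mathfrak S^p(\Gm;X)\|F(\varepsilon,\cdot)\|_{L^p(\Gm;X)}$ for each fixed $\varepsilon$. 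This reduces everything to $\mathbb E_\varepsilon\|F\|_{L^p(\Gm;X)}^p$. A second pointwise application of the contraction principle, now to the coefficients $\overline{\psi_{\zeta_s}(x)}$, bounds this by $\mathbb E_\varepsilon\|\sum_s\varepsilon_sP_{B_s}f\|_{L^p(\Gm;X)}^p$, up to an absolute constant.

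In your $\mathcal R$-boundedness language: unimodular pointwise multipliers form an $\mathcal R$-bounded family on $L^p(\Gm;X)$ for every Banach space $X$. Hence $\{\psi_{\zeta_s}P_{[0,L)}\overline{\psi_{\zeta_s}}\}_s$ is a product of $\mathcal R$-bounded families, with $\mathcal R$-bound $\lesssim\mathfrak S^p(\Gm;X)$. This is exactly the paper's proof. UMD and type~$2$ enter only through Lemma~\ref{lem:rectangular-envelope-estimate}.

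Also, your claim that $P_{[0,L)}$ commutes with multiplication by $\psi_\kappa$ is false. If $\kappa\neq0$ has vanishing first $k+1$ digits and $\operatorname{supp}\widehat g\subset\Gamma_{k+1}$, then $P_{[0,L)}(\psi_\kappa g)=0$. What is available is the conjugation identity \eqref{eq:modulation-identity}, and that is all you need.
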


\begin{proof}
Let \(B_s\) be the rectangular envelopes defined in
Section~\ref{s4.4}. Set
\(
g_s:=P_{B_s}f.
\)
Recall that, in the proof of
Lemma~\ref{lem:digital-rectangular-envelopes},
\(\zeta_s\oplus [0,L)\subset B_s\). Consequently,
\[
P_{\zeta_s\oplus [0,L)}g_s
=
P_{\zeta_s\oplus [0,L)}f.
\]
The modulation identity \eqref{eq:modulation-identity} gives
\[
P_{\zeta_s\oplus [0,L)}g_s
=
\psi_{\zeta_s}
P_{[0,L)}
\bigl(\overline{\psi_{\zeta_s}}g_s\bigr).
\]
Set
\(
F:=\sum_s\varepsilon_s\overline{\psi_{\zeta_s}}g_s.
\)
For every fixed
realization of \(\varepsilon\), we have
\[
\left\|P_{[0,L)}F(\varepsilon,\cdot)\right\|_{L^p(\Gm;X)}
\le
\mathfrak S^p(\Gm;X)
\left\|F(\varepsilon,\cdot)\right\|_{L^p(\Gm;X)}.
\]
Taking the \(L^p\)-norm in $\varepsilon$ therefore gives
\[
\left\|P_{[0,L)}F\right\|_{L^p(\Omega_\varepsilon\times\Gm;X)}
\le
\mathfrak S^p(\Gm;X)
\left\|F\right\|_{L^p(\Omega_\varepsilon\times\Gm;X)}.
\]
Consequently, applying the complex contraction principle
pointwise on \(\Gm\), followed by Fubini's theorem, we obtain
\begin{align*}
\left(
\mathbb E_\varepsilon
\left\|
\sum_s\varepsilon_sP_{\zeta_s\oplus [0,L)}f
\right\|_{L^p(\Gm;X)}^p
\right)^{1/p}
&\lesssim
\left(
\mathbb E_\varepsilon
\left\|
P_{[0,L)}F
\right\|_{L^p(\Gm;X)}^p
\right)^{1/p}
\\
&\le
\mathfrak S^p(\Gm;X)
\left(
\mathbb E_\varepsilon
\left\|
F
\right\|_{L^p(\Gm;X)}^p
\right)^{1/p}
\\
&\lesssim
\mathfrak S^p(\Gm;X)
\left(
\mathbb E_\varepsilon
\left\|
\sum_s\varepsilon_sP_{B_s}f
\right\|_{L^p(\Gm;X)}^p
\right)^{1/p}.
\end{align*}
 By
Lemma~\ref{lem:rectangular-envelope-estimate}, it follows that
\[
\left(
\mathbb E_\varepsilon
\left\|
\sum_s\varepsilon_sP_{\zeta_s\oplus [0,L)}f
\right\|_{L^p(\Gm;X)}^p
\right)^{1/p}
\lesssim_p
t_{2,X}^2\beta_{p,X}^2
\mathfrak S^p(\Gm;X)
\|f\|_{L^p(\Gm;X)}.
\]
This completes the proof.
\end{proof}

\subsection{Proof of Theorem~\ref{thm:oplus-equal-umd-type2-main} and Corollary~\ref{cor:oplus-common-interval}}
We now prove the main results for the Vilenkin formulation of the LPR inequality.
We first prove Theorem~\ref{thm:oplus-equal-umd-type2-main}.

\begin{proof}[Proof of Theorem~\ref{thm:oplus-equal-umd-type2-main}]
Since \(X\) is UMD, the uniform partial-sum estimate
\cite[Theorem~1.2]{CH2026}, applied with exponent \(p\) and range space \(X\), gives
\[
\mathfrak S^p(\Gm;X)\lesssim_p\beta_{p,X}^4,
\]
uniformly over all generating sequences \(\mathbf m\). Combining this with
Proposition~\ref{prop:oplus-reduction} yields
the conclusion.
\end{proof}

Now we can deduce Corollary~\ref{cor:oplus-common-interval} from Theorem~\ref{thm:oplus-equal-umd-type2-main}.
\begin{proof}[Proof of Corollary~\ref{cor:oplus-common-interval}]
Put \(\ell:=L_2-L_1\). If \(\ell=1\), then
\[
\zeta_s\oplus[L_1,L_2)
=\{\zeta_s\oplus L_1\}
=(\zeta_s\oplus L_1)\oplus[0,1),
\]
so the result follows from Theorem~\ref{thm:oplus-equal-umd-type2-main}.
Suppose that \(\ell\ge2\), and let \(\alpha_j(n)\) denote the \(j\)-th
digit in the mixed-radix expansion of \(n\). Write
\[
L_1=\sum_{j\ge0}\alpha_j(L_1)M_j,
\qquad
L_2-1=\sum_{j\ge0}\alpha_j(L_2-1)M_j.
\]
Let \(k\) be the largest index for which
\[
\alpha_k(L_1)\ne\alpha_k(L_2-1).
\]
Then
\[
L_1=h+aM_k+r,
\qquad
L_2-1=h+bM_k+t,
\]
where
\[
M_{k+1}\mid h,
\qquad
0\le a<b<m_k,
\qquad 0\le r,t<M_k.
\]
Define
\[
w:=h+(a+1)M_k,
\qquad
\ell_-:=w-L_1=M_k-r,
\qquad
\ell_+:=L_2-w=(b-a-1)M_k+t+1.
\]
Then \(L_1<w<L_2\) and
\begin{equation}\label{eq:common-interval-two-piece}
[L_1,L_2)
=[L_1,w)\dunion[w,L_2)
=\bigl((w-1)\ominus[0,\ell_- )\bigr)
\dunion
\bigl(w\oplus[0,\ell_+ )\bigr).
\end{equation}
Consequently, we may write
\(
I_s^{\oplus}=I_s^-\dunion I_s^+,
\)
where
\[
I_s^-=(\zeta_s\oplus(w-1))\ominus[0,\ell_-),
\qquad
I_s^+=(\zeta_s\oplus w)\oplus[0,\ell_+).
\]

The family \((I_s^+)_s\) is a pairwise disjoint family of Vilenkin
intervals of the special form \(\eta_s\oplus[0,\ell_+)\), where
\(\eta_s=\zeta_s\oplus w\). Theorem~\ref{thm:oplus-equal-umd-type2-main}
therefore gives
\[
\left(\mathbb E_{\varepsilon}
\left\|
 \sum_s\varepsilon_sP_{I_s^+}f
\right\|_{L^p(\Gm;X)}^p
\right)^{1/p}
\lesssim_pt_{2,X}^2\beta_{p,X}^6
\left\|f\right\|_{L^p(\Gm;X)}.
\]

For the family \((I_s^-)_s\), define
\[
\iota(n):=0\ominus n,\quad n\in\N,\qquad
\iota(A):=\{0\ominus n:n\in A\},\quad A\subset\N.
\]
For \(x=(x_j)_{j\ge0}\in\Gm\), let
\[
-x:=((-x_j)\bmod m_j)_{j\ge0},
\qquad (Rf)(x):=f(-x).
\]
Haar measure is invariant under the map \(x\mapsto-x\), so \(R\) is an isometric
involution on \(L^p(\Gm;X)\). Since
\[
\overline{\psi_n(x)}=\psi_{0\ominus n}(x),
\]
we have
\(
RP_AR=P_{\iota(A)}.
\)
Moreover,
\[
\iota(I_s^-)
=\iota(\zeta_s\oplus(w-1))\oplus[0,\ell_-).
\]
Thus \((\iota(I_s^-))_s\) is a pairwise disjoint family of Vilenkin
intervals. Applying Theorem~\ref{thm:oplus-equal-umd-type2-main}
to this family and to \(Rf\), and using the isometry of \(R\), gives
\[
\begin{aligned}
\left(\mathbb E_{\varepsilon}
\left\|
 \sum_s\varepsilon_sP_{I_s^-}f
\right\|_{L^p(\Gm;X)}^p
\right)^{1/p}
&=\left(\mathbb E_{\varepsilon}
\left\|
 R\sum_s\varepsilon_sP_{\iota(I_s^-)}Rf
 \right\|_{L^p(\Gm;X)}^p
\right)^{1/p}
\\
&\lesssim_{p}t_{2,X}^2\beta_{p,X}^6
 \left\|f\right\|_{L^p(\Gm;X)}.
\end{aligned}
\]
Since \(P_{I_s^{\oplus}}=P_{I_s^-}+P_{I_s^+}\), the triangle inequality
proves Corollary \ref{cor:oplus-common-interval}.
\end{proof}

\medskip

\noindent\textbf{Acknowledgements.} This work is partially supported by the National Natural Science Foundation of China (Grant Nos. 12325105, W2441002, W2611005).

\noindent\textbf{AI Statement.}
The authors acknowledge the use of ChatGPT for language
polishing, LaTeX editing, and exploratory mathematical discussions during the development and
preparation of this manuscript. Some ideas used in the proof-development process emerged during
interactions with GPT-5.6 Sol. Suggestions from these AI interactions were subsequently examined, reformulated, incorporated into the manuscript, and independently verified by
the authors. The authors assume full responsibility for all mathematical content in the final manuscript.

%\noindent \textbf{Data Availability.} Data sharing is not applicable to this article as no datasets were generated or analysed during the current study.

%\noindent \textbf{Conflicts of interest.} The authors declare that they have no conflicts of interest.

\end{document}